\numberwithin{equation}{section}
\newtheorem{notation}{Notation}[section]
\newtheorem{example}{Example}[section]
\newtheorem{thm}{Theorem}[section]
\newtheorem{cor}{Corollary}[section]
\newtheorem{note}{Note}[section]
\newtheorem{pro}{Proposition}[section]
\newtheorem{defn}{Definition}[section]
\newtheorem{lemma}{Lemma}[section]
\begin{document}
	\vspace{-2cm}
	\markboth{R.Vishnupriya and R. Rajkumar}{New matrices for spectral hypergraph theory, II}
	\title{\LARGE\bf 
		New matrices for spectral hypergraph theory, II}
	\author{R. Vishnupriya\footnote{e-mail: {\tt rrvmaths@gmail.com},} \footnote{First author is supported by University Grands Commission (U.G.C.), Government of India under the Fellowship Grant No. 221610053976,},~ 
		R. Rajkumar\footnote{e-mail: {\tt rrajmaths@yahoo.co.in}.}\\ 
		{\footnotesize Department of Mathematics, The Gandhigram Rural Institute (Deemed to be University),}\\ \footnotesize{Gandhigram -- 624 302, Tamil Nadu, India}\\[3mm]
	}
	\date{}
	\maketitle
	\begin{abstract}
	The properties of a hypergraph explored through the spectrum of its unified matrix  was made by the authors in~\cite{vishnu part1}. In this paper, we introduce three  different hypergraph matrices: unified Laplacian matrix, unified signless Laplacian matrix, and unified normalized Laplacian matrix, all defined using the unified matrix. We show that  these three matrices of a hypergraph are respectively identical to the  Laplacian matrix,  signless Laplacian matrix, and normalized Laplacian matrix of the associated graph. This allows us to use the spectra of these hypergraph matrices as a means to connect the structural properties of the hypergraph with those of the associated graph. Additionally, we introduce certain hypergraph structures and invariants during this process, and relate them to the eigenvalues of these three matrices.
		\vspace{-0.3cm}
		
\noindent	\textbf{Keywords:} Hypergraphs, Unified Laplacian matrix, Unified signless Laplacian matrix, Unified normalized Laplacian matrix, Spectrum. \\
	\textbf{2010 Mathematics Subject Classification:}   05C50, 05C65, 05C76, 15A18
	
\end{abstract}

\section{Introduction}

Spectral graph theory examines graph properties through the eigenvalues and eigenvectors of matrices like the adjacency matrix, Laplacian matrix, signless Laplacian matrix, and normalized Laplacian matrix (see,~\cite{bapat, chung bk, cvetko}). A hypergraph extends the concept of a graph by allowing edges, or hyperedges, to connect more than two vertices. In spectral hypergraph theory, both matrices and tensors have been associated with hypergraphs in recent decades, with tensors being introduced in~\cite{Bulo, cooper2012spectra, Hu, Li, Xie, Qi, Banarjee tensor}. For more on tensor spectra associated with hypergraphs, see ~\cite{Qi tensor bk}. However, computing eigenvalues is an NP-hard problem, and not all aspects of spectral graph theory extend smoothly to hypergraphs when using tensors. These challenges highlight the limitations of tensor-based methods in spectral hypergraph theory. Matrix-based approaches, as seen in ~\cite{ad mat 1, Rod 2002, reff2012, banerjee2021spectrum, cardoso}, address these issues by analyzing hypergraph structures through the spectra of associated matrices. A graph can be uniquely determined by its adjacency matrix, and similarly, a hypergraph can be identified by its associated tensors mentioned above. However, this uniqueness may not hold for the associated matrices mentioned above.

To address this, in~\cite{vishnu part1}, we introduced the unified matrix for a hypergraph, which is identical to the adjacency matrix of the associated graph. We used the unified matrix's spectrum to link the structural properties of the hypergraph with those of the graph. We also introduced certain hypergraph structures and invariants, related them to the eigenvalues of the unified matrix.

This paper we introduce three distinct hypergraph matrices: the unified Laplacian matrix, the unified signless Laplacian matrix, and the unified normalized Laplacian matrix, all defined using the unified matrix. We show that these matrices are identical to the Laplacian, signless Laplacian, and normalized Laplacian matrices of the associated graph. This allows us to study the spectra of these hypergraph matrices to establish connections between the structural properties of hypergraphs and their associated graphs.

As the three matrices mentioned above are defined using the unified matrix, we will be referred~\cite{vishnu part1} in
the sequel as Part~I. While we adhere to the terminology and notation from Part~I, some of it will be repeated here for the convenience of the reader.

The rest of the paper is arranged as follows. Section~\ref{Lsec2} contains some basic definitions and notations on matrices, graphs and hypergraphs. In addition, we recall some definitions introduced in Part~I. In Section~\ref{Lsec4}, we introduce the 
unified Laplacian matrix of a hypergraph.  
We bound the largest unified Laplacian eigenvalue and the second smallest unified Laplacian eigenvalue, namely the algebraic $d$-connectivity of a hypergraph using some hypergraph invariants.
In addition, we introduce different types of connectedness, distances and diameters in a hypergraph. Further, we establish the relationship between those connectedness and diameters of a hypergraph.  Also, we bound the diameters of a hypergraph using its unified Laplacian eigenvalues and some hypergraph invariants.
Moreover, we extend the Matrix-Tree Theorem for a graph to hypergraphs. In Section~\ref{Lsec5}, we introduce the unified signless Laplacian matrix of a hypergraph. We characterize deeply connected hypergraphs having no odd exact cycle using their signless Laplacian eigenvalue zero and using its arithmetic multiplicity, we analyze some hypergraph properties. Also, we bound its largest unified singless Laplacian eigenvalue using some hypergraph invariants. In Section~\ref{Lsec6}, we define the unified normalized Laplacian matrix of a hypergraph. We discuss when this matrix has two as its eigenvalue, and we count its arithmetic multiplicity using some properties of a hypergraph. We bound unified normalized Laplacian eigenvalues using exact set diameter and some hypergraph invariants. We introduce the unified Cheeger constant of a hypergraph and bound it by using the second smallest unified normalized Laplacian eigenvalue of a hypergraph. Also, we introduce exact set distance between two subsets of $I(H)$ and provide some spectral bounds on it. In Section~\ref{Lsec7}, present some facts about cospectral hypergraphs with respect to these four matrices


\section{Preliminaries}\label{Lsec2}
Let $S$ be a non-empty set. We denote the set of all non-empty subsets of $S$ by $\mathcal{P}^*(S)$.
A \textit{hypergraph} $H$ consists of a non-empty set $V(H)$ and a multiset $E(H)$ of non-empty subsets of $V(H)$.
The elements of $V(H)$ are called \textit{vertices} and the elements of $E(H)$ are called \textit{hyperedges}, or simply \textit{edges} of $H$. We denote the set of all elements in the multiset $E(H)$ by $E^*(H)$. $H$ is said to be \textit{$m$-uniform} if all of it’s edges have the cardinality $m$.
An edge $e$ in $H$ is called \textit{included} if there exist an edge $e'(\neq e)$in H such that $e\subset e'$.
The \textit{multiplicity} of an edge $e$ in $H$, denoted by $m(e)$, is the number of occurrences of that edge in $H$. An edge $e$ of $H$ is called \textit{multiple} if $m(e)\geq2$.  An edge of $H$ having cordinality one is called a \textit{loop}. The \textit{degree} of a vertex $v$ in $H$, denoted by $d_H(v)$, is the number of edges in $H$ containing $v$. 
Let $\delta(H)$ and $\Delta(H)$ denote the minimum and the maximum degrees of all the vertices in $H$, respectively. 
$H$ is said to be \textit{trivial} if it has only one vertex and no edge. 
$H$ is said to be \textit{simple} if it has no loops or multiple edges.  A \textit{subhypergraph} of $H$ is a hypergraph $H'$ with $V(H')\subseteq V(H)$ and $E(H')\subseteq E(H)$.

Now, we state some definitions and notations on graphs: Let $G$ be a graph. The \textit{distance} between two vertices $u$ and $v$ in $G$, denoted by $d_G(u,v)$, is the length of the shortest path joining $u$ and $v$. If there is no path joining $u$ and $v$ in $G$, then $d_G(u,v)$ is defined to be $\infty$. In a connected graph $G$, the value, $\max\{d_G(u,v)~|~u,v\in V(G)\}$ is called the \textit{diameter} of $G$. 		
Let $X$, $Y$ be non-empty subsets of $V(G)$. The \textit{distance between $X$ and $Y$} is the minimum distance between a vertex in $X$ and a vertex in $Y$. Let $E(X, Y)$ denotes the set of all edges in $G$ with one vertex in $X$ and the other in $Y$.  The \textit{volume} of a subset $S$ of $V(G)$ is defined as the sum of the degrees of all the vertices in $S$ and is denoted by $vol(S)$. We denote the $vol(V(G))$ simply by $vol(G)$. The Cheeger constant of $G$ is denoted by $h(G)$ and is defined by $h(G)=\underset{\Phi\subset S\subset V(G)}{\min}h_G(S)$, where $h_G(S)=\displaystyle\frac{|E(S, V(G)\backslash S)|}{\min \{vol(S), vol(V(G)\backslash S)\}}$. $P_n$, $C_n$ and $K_n$ denote the path, the cycle and the complete graph on $n$ vertices, respectively. The complete $r$-partite graph with partitions of cardinality $n_1,n_2,\dots, n_r$ is denoted by $K_{n_1,n_2,\dots,n_r}$.

 Let $G$ be a finite, loopless graph with vertex set $V(G)= \{v_1,v_2,\dots,v_n\}$ and edge set $E(G) = \{ e_1,e_2,\dots,e_m \}$. 
The \textit{adjacency matrix} of $G$, denoted by $A(G)$, is the matrix of order $n$ whose rows and columns are indexed by the vertices of $G$ and for all $v_i, v_j\in V(G)$,
\begin{center}
	the $(v_i, v_j)^{th}$ entry of $A(G)=$
	$\begin{cases}
		m(\{v_i,v_j\}), &\text{if}~i\neq j~\text{and}~\{v_i,v_j\}\in E(G);\\
		0,&\text{otherwise}.
	\end{cases}$ 
\end{center}
The \textit{degree matrix} of $G$, denoted by $D(G)$, is the diagonal matrix of order $n$ whose rows and columns are indexed by the vertices of $G$ and the $(v_i,v_i)$-th entry is the degree of the vertex $v_i$ in $G$ for $i=1,\dots,n$. 
The \textit{Laplacian matrix} of $G$, denoted by $L(G)$, is the matrix $D(G)-A(G)$. The \textit{signless Laplacian matrix} of $G$, denoted by $Q(G)$, is the matrix $D(G)+A(G)$.
The \textit{normalized Laplacian matrix} of $G$, denoted by $\hat{L}(G)$, is the matrix of order $n$ whose rows and columns are indexed by the vertices of $G$ and for all $v_i, v_j\in V(G)$,  \begin{center}
	the $(v_i, v_j)^{th}$ entry of $\hat{L}(G)=$
	$\begin{cases}
		1, &\text{if}~i= j~\text{and}~d_G(v_i)\neq0;\\
		-\frac{m(\{v_i,v_j\})}{\sqrt{d_G(v_i)d_G(v_j)}}, &\text{if}~i\neq j~\text{and}~\{v_i,v_j\}\in E(G);\\
		0,&\text{otherwise}.
	\end{cases}$ 
\end{center}
Notice that if $H$ has no isolated vertices, then $\hat{L}(G)=D(G)^{-\frac{1}{2}}L(G)D(G)^{-\frac{1}{2}}$.
The \textit{incidence matrix} of a simple graph $G$, denoted by $B(G)$, is the $0-1$ matrix whose rows and columns are indexed by the vertices and the edges of $G$, respectively. The $(v_i, e_j)-$th entry of $B(G)$ is $1$ if and only if $v_i\in e_j$.
Consider an orientation to each edge of a simple graph $G$. Then the \textit{vertex-arc incidence matrix} of $G$, denoted by $R(G)$, is the matrix whose rows and columns are indexed by the vertices and the arcs of $G$, respectively.

The $(v_i, e_j)-$th entry of $R(G)$=$\begin{cases}
	1,& \text{if}~v_i~ \text{is the head of}~e_j;\\
	-1,&	\text{if}~v_i~ \text{is the tail of}~e_j;\\
	0,& \text{otherwise}.
\end{cases}$

We denote the all-ones matrix of size $n\times m$ by $J_{n\times m}$.  We denote $\mathbf{0}$ as the zero matrix of appropriate size. 	A matrix is said to be \textit{totally unimodular} if the determinant of any of its square submatrix is either $0$ or $\pm1$. 
The rank of a matrix $M$ is denoted by $r(M)$. If $M$ is a square matrix, then the trace of $M$ is denoted by $tr(M)$, and the characteristic polynomial of $M$ is denoted by $P_M(x)$. The eigenvalues of $M$ are denoted by $\lambda_i(M)$, $i=1,2,\dots,n$. The spectrum of $M$ is the multiset of eigenvalues of $M$ and is denoted by $\sigma(M)$. 
\subsection*{{\it 2.1 Definitions introduced in Part~I}}\label{Lsec3}

Now, we recall some notations and definitions introduced in Part~I that we will use in this paper. 
Let $S$ be a non-empty set. If $\{S_1,S_2\}$ is a $2$-partition of $S$, then we call $S_1$ and $S_2$ as parts of $S$. Let $\tau(S)$ denote the set of all $2-$partitions of $S$.

Let $H$ be a hypergraph. Let $I(H)$ denote the set of all the parts of each edge of $H$ together with all the singleton subsets of $V(H)$. The cardinality of $I(H)$ is called the \textit{edge index} or simply the \textit{$e$-index} of $H$. 		
For $S$ and $S'\in I(H)$,  \textit{$S$ is said to be a neighbor of $S'$ with multiplicity} $c~(\geq1)$, if $S$ and $S'$ forms a partition of $c$ number of edges in $H$. It is denoted by $S\overset{c}{\sim}S'$. Also, we simply write $S\sim S'$, whenever there is no necessity to mention the multiplicity of $S$ explicitly.

Let $H$ be a hypergraph with $e$-index $k$. Let $I(H)=\{S_1, S_2,\dots, S_{k}\}$. Then \textit{the unified matrix of $H$}, denoted by $\mathbf{U}(H)$, is the matrix of order $k$ whose rows and columns are indexed by the elements of $I(H)$ and for all $S_i, S_j\in I(H)$, 
\begin{center}
	the $(S_i, S_j)^{th}$ entry of $\mathbf{U}(H)=$
	$\begin{cases}
		m(\{S_i\}), &\text{if}~i=j,~|S_i|=1~\text{and}~S_i\in E(H);\\
		c, &\text{if}~S_i \overset{c}{\sim} S_j;\\
		0,&\text{otherwise}.
	\end{cases}$
\end{center}

The \textit{associated graph} of a loopless hypergraph $H$, denoted by $G_H$,
is the graph  whose vertex set is $I(H)$ and two vertices $S$ and $S'$ are joined by $c~(\geq1)$ number of edges in $G_H$ if and only if $S \overset{c}{\sim} S'$ in $H$.  Notice that $A(G_H)=\mathbf{U}(H)$ and so they have the same spectrum. 
Also, if $H$ is a loopless graph, then the eigenvalues of $A(H)$ and the eigenvalues of $\mathbf{U}(H)$ are the same. Thereby, we denote the eigenvalues of these two matrices commonly as $\lambda_i(H)$, $i=1,2,\dots,k$.

Let $H$ be a hypergraph. For each $S\in I(H)$, the \textit{unified degree of $S$ in $H$}, denoted by $d_H(S)$, is the cordinality of the multiset $\{e\in E(H)~|~S\subseteq e\}$.

An \textit{exact walk} in $H$ is a sequence $EW:(S=)S_0,$ $e_1,S_1,e_2,S_2,$ $\dots,e_{n-1},$ $S_{n-1},e_n,S_n(=S')$, where $\{S_{i-1}, S_i\}$ is a $2$-partition of the edge $e_i$ in $H$ for $i=1,2,3,\dots,n$. We say $EW$ joins $S$ and $S'$. Each $S_i$ occurs in $EW$ is a \textit{part} of $EW$. $S$ and $S'$ are \textit{the initial part} and \textit{the terminal part} of $EW$, respectively; Other parts are called internal parts of $EW$. We call the set of all parts of $EW$ as its \textit{cover}. If $S=\{u\}$ and $S'=\{v\}$, then we say that $EW$ joins $u$ and $v$. The \textit{length of $EW$}, denoted by $l(EW)$, is the number of edges in $EW$, i.e., $n$.  
We denote the exact walk simply by $EW:(S=)S_0,S_1,S_2,\dots,S_{n-1},S_n(=S')$, when the edges of the walk are self evident.

An exact walk in a hypergraph $H$ is said to be an \textit{exact path} if all its parts are distinct.	
An exact walk in $H$ with at least three distinct edges is said to be an \textit{exact cycle} if its initial and terminal parts are the same and all the internal parts are distinct. We say an exact cycle as \textit{odd (even)} if its length is odd (even).  A hypergraph with at least two (three) distinct edges is said to be an \textit{unified path} (\textit{unified cycle}) if its edges and some of their parts can be arranged in an exact path (exact cycle) sequence whose parts are pairwise disjoint.

A hypergraph $H$ is said to be \textit{exactly connected} if for any two distinct vertices $u$ and $v$ in $H$, there exists $S, S'\in I(H)$ with $u\in S$ and $v\in S'$ such that $S$ and $S'$ are joined by an exact path in $H$. The \textit{exact distance} or simply the \textit{$e$-distance} between the distinct vertices $u$ and $v$ in $H$, denoted by $ed_H(u,v)$, is the length of a shortest exact path joining $S, S'\in I(H)$ with $u\in S$ and $v\in S'$, if such an exact path exists; $\infty$, otherwise. For a vertex $u$ in $H$, we define $ed_H(u,u)=0.$ 
The \textit{exact diameter} of an exactly connected hypergraph $H$,  denoted by $ED(H)$, is the maximum of the $e$-distance between all the pairs of vertices in $H$, i.e., $ED(H)=\max\{ed_H(u,v)~|~u,v\in V(H)\}$.


\section{Unified Laplacian matrix of a hypergraph}\label{Lsec4}
In this section, we introduce the 
unified Laplacian matrix of a hypergraph and relate some of its invariants to the hypergraph invariants. 
\begin{defn}\normalfont
	Let $H$ be a hypergraph. For $S\in I(H)$, we define the \textit{modified unified degree of $S$ in $H$}, denote by 	$d^*_H(S)$, as
	
	$d^*_H(S)=\begin{cases}
	d_H(S),&\text{if}~|S|=1~\text{or}~S\notin E(H);\\
	d_H(S)-m(S),&\text{otherwise.}
	\end{cases}$ 
\end{defn}
\begin{defn}\normalfont
Let $H$ be a hypergraph. For a non-empty subset $\mathcal{S}$ of $I(H)$, we define the \textit{volume of $\mathcal{S}$} in $H$, denoted by $vol_H(\mathcal{S})$, as $$vol_H(\mathcal{S})=\underset{S\in \mathcal{S}}{\sum}d^*_H(S).$$
We call the volume of $I(H)$ as the \textit{volume of $H$} and denote it simply by $vol(H)$.
\end{defn}

\begin{defn}\normalfont
	The \textit{modified unified degree matrix} of a hypergraph $H$ with $e$-index $k$, denoted by $\mathbf{U}^{\mathbf{D}}(H)$, is the diagonal matrix of order $k$ whose rows and columns are indexed by the elements $S_1, S_2,\dots,S_k$ of $I(H)$ with $d^*_H(S_i)$ in the $i$-th diagonal entry for all $i=1,2,\dots,k$.
\end{defn}
The modified unified degree matrix of the hypergraph $H$ shown in Figure~\ref{fig} is $diag(4,3,3,3,2,2,$ $3,2,1,2,1,1,1,1)$. 
\begin{figure}[ht]
	\begin{center}
		\includegraphics[scale=1]{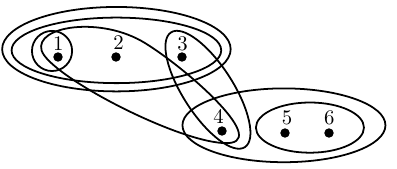}
	\end{center}\caption{The hypergraph $H$}\label{fig}
\end{figure}
\begin{defn}\normalfont
	The \textit{unified Laplacian matrix} of a hypergraph $H$, denoted by $\mathbf{U}^{\mathbf{L}}(H)$, is the matrix $\mathbf{U}^{\mathbf{D}}(H)-\mathbf{U}(H)$. 
\end{defn}
Let $H$ be a hypergraph with $e$-index $k$. Since $\mathbf{U}^{\mathbf{L}}(H)$ is a real symmetric matrix, its eigenvalues are all real. We denote them by $\nu_1(H), \nu_2(H),\dots, \nu_k(H)$ and we shall assume that $\nu_1(H)\geq\nu_2(H)\geq\dots\geq\nu_{k}(H)$.
The characteristic polynomial of $\mathbf{U}^{\mathbf{L}}(H)$ is said to be the \emph{unified Laplacian characteristic polynomial of $H$}. An eigenvalue of $\mathbf{U}^{\mathbf{L}}(H)$ is said to be a \emph{unified Laplacian eigenvalue of $H$} and the spectrum of $\mathbf{U}^{\mathbf{L}}(H)$ is said to be the \textit{unified Laplacian spectrum of $H$}, or simply \textit{$\mathbf{U}^{\mathbf{L}}$-spectrum} of $H$. 

For a loopless hypergraph $H$, the degree of a vertex $S$ of $G_H$ is $d^*_H(S)$ and so $\mathbf{U}^{\mathbf{D}}(H)=D(G_H)$. Consequently $\mathbf{U}^{\mathbf{L}}(H) = L(G_H)$.  Also, notice that if $H$ is a loopless graph, then $\mathbf{U}^{\mathbf{L}}(H)=L(H)$ and so the eigenvalues of $\mathbf{U}^{\mathbf{L}}(H)$ and $L(H)$ are the same. Thereby, we denote the eigenvalues of these two matrices commonly as $\nu_i(H)$, $i=1,2,\dots,k$.
These reveals that the unified Laplacian matrix of a loopless hypergraph is a natural generalization of the Laplacian matrix of a loopless graph.

\begin{defn}\normalfont
	Let $H$ be a loopless hypergraph. An \textit{orientation of an edge} $e$ in $H$ is a $2$-tuple $(S,S')$ such that $\{S,S'\}$ is a $2$-partition of $e$. We say that $S$ and $S'$ are the \textit{tail} and the \textit{head} of $e$, respectively. 
	A hypergraph is said to be \textit{oriented} if each of its edge has an orientation.
	
	Also, we mean an oriented edge $e$ with orientation $(S, S')$ as one side oriented, if $(S',S)$ is not an orientation of $e$. A one side orientation of $e$ is called \textit{an arc of $e$}. 
	Thus, for an edge $e$, there can be at most $\tau(e)$ number of arcs.
\end{defn}

\begin{defn}\normalfont
	Let $H$ be a simple hypergraph with $e$-index $k$. Consider a orientation of each edge of $H$. Let the resulting oriented hypergraph be $\vec{H}$. Let $\theta(H)$ denote the union of the set of all arcs of each edge in $E(H)$. Then the \textit{arc incidence matrix} of $\vec{H}$ (or an \textit{arc incidence matrix} of $H$), denoted by $\mathbf{R}(\vec{H})$ (or $\mathbf{R}(H)$), is the matrix of order $k\times |\theta(H)|$ whose rows and columns are indexed by the elements $S_1,S_2,\dots,S_k$ of $I(H)$ and by the elements $T_1,T_2,\dots,T_{|\theta(H)|}$ of $\theta(H)$, respectively.  For all $i=1,2,\dots,k$; $j=1,2,\dots,|\theta(H)|$,
	
	the $(S_i, T_j)-$th entry of $\mathbf{R}(H)$=$\begin{cases}
	1,& \text{if}~S_i~ \text{is the head in the arc}~T_j;\\
	-1,&	\text{if}~S_i~ \text{is the tail in the arc}~T_j;\\
	0,& \text{otherwise}.
	\end{cases}$
\end{defn}
\begin{note}\normalfont\label{note1}
	Let $H$ be a simple hypergraph.	For each edge in $H$, consider its all possible one side orientations. 
	Since the head and the tail of each orientation of an edge in $H$ are vertices in $G_H$, using these orientations, we can assign orientation to each edge of $G_H$. Then the vertex-arc incidence matrix $R(G_H)$ of $G_H$ is the same as $\mathbf{R}(H)$. Since $L(G_H)=R(G_H)R(G_H)^T$, it follows that $\mathbf{U}^{\mathbf{L}}(H)=\mathbf{R}(H)\mathbf{R}(H)^T$.
\end{note}
Now we start to investigate the properties of the unified Laplacian matrix of a hypergraph.
\begin{lemma}\label{Ob}
	Let $H$ be a hypergraph with $e$-index $k$. Then we have the following.
	\begin{itemize}
		\item[(i)] The row sum and the column sum of $\mathbf{U}^{\mathbf{L}}(H)$ are zero.
		\item [(ii)] If $H$ is simple, then $\mathbf{U}^{\mathbf{L}}(H)$ is positive semi-definite.
		\item[(iii)]If $H$ is simple, then $r(\mathbf{U}^{\mathbf{L}}(H))=r(\mathbf{R}(H))$.
		\item[(iv)] The cofactors of any two elements of $\mathbf{U}^{\mathbf{L}}(H)$ are equal.
		\item[(v)]$\underset{i=1}{\overset{k}{\sum}}\nu_i(H)=vol(H)-\underset{\{v\}\in E^*(H)}{\sum}m(\{v\}) =2\underset{e\in E^*(H)}{\sum}m(e)|\tau(e)|$.
		\item[(vi)] $\underset{i=1}{\overset{k}{\sum}}\nu_i(H)^2=2\left(\underset{e\in E^*(H)}{\sum}m(e)^2|\tau(e)|\right)+\underset{v\in V(H)}{\sum}(d_H(v)-m(\{v\}))^2+\underset{S\in I(H)}{\underset{|S|>1}{\sum}}d^*_H(S)^2$.
	\end{itemize}
\end{lemma}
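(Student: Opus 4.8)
The plan is to derive all six items from the definition $\mathbf{U}^{\mathbf{L}}(H)=\mathbf{U}^{\mathbf{D}}(H)-\mathbf{U}(H)$ together with the factorization recorded in Note~\ref{note1}, using elementary linear algebra and a single double-counting argument over the $2$-partitions of the edges. The one observation that drives the computational parts is that a $2$-partition $\{S,S'\}$ determines its edge uniquely as $S\cup S'$, so the off-diagonal entry of $\mathbf{U}(H)$ indexed by $(S,S')$ is exactly $m(S\cup S')$ when $\{S,S'\}\in\tau(S\cup S')$ and $0$ otherwise.

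For (i) I would argue directly. The matrix is symmetric because the neighbor relation $\overset{c}{\sim}$ is symmetric and $\mathbf{U}^{\mathbf{D}}(H)$ is diagonal, so it suffices to check that each row sum vanishes. Fixing a row indexed by $S\in I(H)$, its off-diagonal entries sum to $-\sum_{S'\sim S}c$, which by the observation above equals minus the number (with multiplicity) of edges strictly containing $S$. A short case analysis on the definition of $d^*_H(S)$ — separating the loop case $|S|=1,\ S\in E(H)$, the case $|S|>1,\ S\in E(H)$, and the remaining cases — shows in each instance that the diagonal entry of $\mathbf{U}^{\mathbf{L}}(H)$ cancels this off-diagonal sum, giving row sum zero; symmetry then yields zero column sums. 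Items (ii) and (iii) are immediate from Note~\ref{note1}, which gives $\mathbf{U}^{\mathbf{L}}(H)=\mathbf{R}(H)\mathbf{R}(H)^{T}$ for simple $H$: for any vector $x$ we have $x^{T}\mathbf{U}^{\mathbf{L}}(H)x=\|\mathbf{R}(H)^{T}x\|^{2}\ge 0$, proving (ii), while the standard identity $\ker(MM^{T})=\ker(M^{T})$ gives $r(MM^{T})=r(M^{T})=r(M)$, proving (iii).

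Item (iv) is the classical ``all cofactors equal'' statement for a matrix with zero row and column sums, which I would deduce from (i). Writing $N=\mathbf{U}^{\mathbf{L}}(H)$, we have $N\,\mathrm{adj}(N)=\det(N)I=0$ and $\mathrm{adj}(N)\,N=0$, and I would split on the rank of $N$. If $r(N)\le k-2$ then every $(k-1)$-minor vanishes and all cofactors are $0$; if $r(N)=k-1$ then, by (i), both $\ker N$ and the left kernel of $N$ are spanned by the all-ones vector, which forces every column and every row of $\mathrm{adj}(N)$ to be constant, hence $\mathrm{adj}(N)$ constant, i.e.\ all cofactors equal. For the trace identities I would use $\sum_i\nu_i(H)=tr(\mathbf{U}^{\mathbf{L}}(H))$ and $\sum_i\nu_i(H)^2=tr(\mathbf{U}^{\mathbf{L}}(H)^2)=\sum_{i,j}(\mathbf{U}^{\mathbf{L}}(H))_{ij}^{2}$ (the latter by symmetry). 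The first equality in (v) is just the definition of $vol(H)$ minus the loop entries on the diagonal of $\mathbf{U}(H)$, namely $\sum_{\{v\}\in E^*(H)}m(\{v\})$; the second equality follows by double counting, since each edge $e$ contributes its $|\tau(e)|$ partitions, each counted twice (once per part), so that $vol(H)-\sum_{\{v\}\in E^*(H)}m(\{v\})=2\sum_{e\in E^*(H)}m(e)|\tau(e)|$. For (vi), the off-diagonal squares give $\sum_{i\ne j}c_{ij}^{2}=2\sum_{e\in E^*(H)}m(e)^2|\tau(e)|$ by the same counting (here $c_{ij}=m(S_i\cup S_j)$), while the diagonal squares split according to $|S|=1$, which after checking that the diagonal entry equals $d_H(v)-m(\{v\})$ for every vertex (loop or not) yields $\sum_{v\in V(H)}(d_H(v)-m(\{v\}))^2$, and $|S|>1$, which yields $\sum_{|S|>1}d^*_H(S)^2$.

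I expect the main obstacle to be purely organizational: the careful bookkeeping of loops in (v) and (vi). The definition of $d^*_H(S)$ subtracts $m(S)$ exactly when $|S|>1$ and $S\in E(H)$, whereas the diagonal of $\mathbf{U}(H)$ is nonzero exactly for loops, i.e.\ when $|S|=1$ and $S\in E(H)$; reconciling these two different corrections so that the singleton diagonal entries of $\mathbf{U}^{\mathbf{L}}(H)$ collapse to the uniform expression $d_H(v)-m(\{v\})$ while the $|S|>1$ entries collapse to $d^*_H(S)$ is where the care is needed. Once these entry-level identities are pinned down, every remaining step is routine trace algebra and the one double-counting argument described above.
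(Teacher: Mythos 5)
Your proof is correct, and it follows the same overall skeleton as the paper's: direct entry-level computation for (i), the factorization $\mathbf{U}^{\mathbf{L}}(H)=\mathbf{R}(H)\mathbf{R}(H)^{T}$ from Note~\ref{note1} for (ii)--(iii), the zero-row-sum property for (iv), and trace identities for (v)--(vi). The differences are in how the supporting facts are handled, and they are worth noting. For (iv), the paper simply cites Bapat's lemma that a matrix with zero row and column sums has all cofactors equal, whereas you prove it from scratch via the adjugate: the rank split $r(N)\le k-2$ versus $r(N)=k-1$, with $N\,\mathrm{adj}(N)=\mathrm{adj}(N)\,N=0$ forcing constant columns and rows in the rank-$(k-1)$ case, is a complete and correct self-contained argument. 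For (v), the paper substitutes Equation~(3.6) of Part~I and tracks the correction term $\partial(H)$ (the multiset of non-singleton included edges), while you bypass that bookkeeping entirely with a direct double count of pairs (part, edge); both are valid, yours being self-contained and the paper's being shorter given Part~I. For (vi), your explicit check that the diagonal entry of $\mathbf{U}^{\mathbf{L}}(H)$ equals $d_H(v)-m(\{v\})$ for \emph{every} vertex, loop or not, is actually slightly more careful than the paper's case table, which literally assigns $0$ to the diagonal entry of a loopless singleton (harmless there only because $m(\{v\})=0$ makes the final formula agree). In short: the paper's proof buys brevity by outsourcing to references; yours buys self-containment and makes the loop bookkeeping fully explicit, at the cost of length.
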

\begin{proof}
	\begin{itemize}
		\item [(i)] Let $\mathbf{U}(H):=(\mathbf{U}_{SS'})$ and let $S\in I(H)$.
		If $|S|=1$, then by~\cite[Lemma~3.1(i)]{vishnu part1}, the row sum corresponds to $S$ in $\mathbf{U}(H)$ equals $d_H(S)$. If $|S|> 1$, then for each $e\in E^*(H)$ of multiplicity $m$ with $S\subseteq e$ contributes $m$ to the entry $\mathbf{U}_{SS'}$, where $S'=e\backslash\{S\}$. 
		So, the row sum corresponding to $S$ in $\mathbf{U}(H)$ is $d_H(S)$, if $S\subset e$; $d_H(S)-m(S)$, if $S=e$.
		Thus the result follows from the definition of $\mathbf{U}^{\mathbf{L}}(H)$.
		\item[(ii)] Since $H$ is simple, $\mathbf{U}^{\mathbf{L}}(H)=L(G_H)$. So, the result follows from the fact that $L(G_H)$ is positive semi-definite.
		\item[(iii)] Since $H$ is simple, $\mathbf{U}^{\mathbf{L}}(H)=\mathbf{R}(H)\mathbf{R}(H)^T$. So, the result follows from the fact that $r(\mathbf{R}(H)\mathbf{R}(H)^T)=r(\mathbf{R}(H))$.
		\item[(iv)] This follows from part~(i) and \cite[Lemma~4.2]{bapat}: ``Let $X$ be an $n\times n$ matrix with zero row and column sums. Then the
		cofactors of any two elements of $X$ are equal".
		\item[(v)]
		First equality follows from the fact that $\underset{i=1}{\overset{k}{\sum}}\nu_i(H)=tr(\mathbf{U}^{\mathbf{L}}(H))$. Consider~\cite[Equation~(3.6)]{vishnu part1}: 
		\begin{align}\label{dsum2}
		\underset{S\in I(H)}{\sum}d_H(S)=2\left(\underset{e\in E^*(H)}{\sum}m(e)|\tau(e)|\right)+\underset{\{v\}\in E^*(H)}{\sum}m(\{v\})+\partial(H),
		\end{align} where $\partial(H)$ is the cardinality of the multiset of all non-singleton included edges of $H$.
		
		By the definition of $d^*_H(S)$, if $S$ is a non-loop included edge, then $d^*_H(S)$ equals $d_H(S)-m(S)$.	Notice that  
		\begin{align}\label{d*}
		vol(H)=\underset{S\in I(H)}{\sum}d^*_H(S) =\underset{S\in I(H)}{\sum}d_H(S)-\partial(H).
		\end{align} Second equality follows by substituting~\eqref{dsum2} in \eqref{d*}.
		\item[(vi)] Let $\mathbf{U}^{\mathbf{L}}(H)=(l_{S_iS_j})_{1\leq i,j\leq k}$ and let $S_i\in I(H)$ for $i=1,2,\dots,k$. Since $\mathbf{U}^{\mathbf{L}}(H)$ is symmetric, we have
		\begin{equation}\label{EQQ1}
		\mathbf{U}^{\mathbf{L}}(H)^2_{S_iS_i}=\sum_{t=1}^{k}l_{S_iS_t}l_{S_tS_i}=\sum_{t=1}^{k}l_{S_iS_t}^2.
		\end{equation}
		Using~\eqref{EQQ1}, we get
		\begin{align}\label{EQQ2}
		\underset{\nu\in \sigma(\mathbf{U}^{\mathbf{L}}(H))}{\sum}\nu^2=tr(\mathbf{U}^{\mathbf{L}}(H)^2)&=\sum_{i=1}^{k}\mathbf{U}^{\mathbf{L}}(H)^2_{S_iS_i}\nonumber\\
		&=\sum_{i=1}^{k}\sum_{t=1}^{k}l_{S_iS_t}^2\nonumber\\
		&=2\sum_{i<t}l_{S_iS_t}^2+\sum_{i=1}^{k}l_{S_iS_i}^2
		\end{align}
		Notice that $$l_{S_iS_t}=\begin{cases}
		-m(e),& \text{if}~S_i\neq S_t~\text{and}~\{S_i,S_t\}\in\tau(e)~\text{for some}~e\in E^*(H);\\
		d(v)-m(\{v\}),&\text{if}~S_i=S_t=\{v\}~\text{and}~\{v\}\in E^*(H);\\
		d^*_H(S),&\text{if}~S_i=S_t~\text{and}~|S_i|>1;\\
		0,&\text{otherwise}.
		\end{cases}$$	
		The result follows by substituting $l_{S_iS_t}$ in~\eqref{EQQ2}.
	\end{itemize}
\end{proof}
\begin{thm}
	Let $H$ be a loopless $m$-uniform hypergraph with $e$-index $k$. Then,
	
	$|E(H)|=\frac{1}{m}\left(\underset{i=1}{\overset{k}{\sum}}\nu_i(H)-\underset{\underset{|S|\geq2}{S\in I(H),}}{\sum}d^*_H(S)\right)$.
\end{thm}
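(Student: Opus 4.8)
The plan is to collapse the eigenvalue sum down to the contribution of the singleton members of $I(H)$ and then use $m$-uniformity. First I would invoke Lemma~\ref{Ob}(v). Since $H$ is loopless it has no singleton edges, so $\sum_{\{v\}\in E^*(H)}m(\{v\})=0$, and the first equality of that part collapses to $\sum_{i=1}^{k}\nu_i(H)=vol(H)$. This is the only input from the preceding results that I need.

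Next I would split the volume according to the cardinality of the members of $I(H)$. By definition $vol(H)=\sum_{S\in I(H)}d^*_H(S)=\sum_{|S|=1}d^*_H(S)+\sum_{|S|\geq2}d^*_H(S)$. Subtracting the non-singleton block from both sides of the identity $\sum_{i=1}^{k}\nu_i(H)=vol(H)$ yields
$$\sum_{i=1}^{k}\nu_i(H)-\sum_{\substack{S\in I(H),\\|S|\geq2}}d^*_H(S)=\sum_{\substack{S\in I(H),\\|S|=1}}d^*_H(S),$$
so it remains to identify the right-hand side with $m|E(H)|$.

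For the final step I would observe that for a singleton $S=\{v\}$ looplessness forces $\{v\}\notin E(H)$, so the defining cases of the modified unified degree give $d^*_H(\{v\})=d_H(\{v\})=d_H(v)$, the ordinary vertex degree. Hence $\sum_{|S|=1}d^*_H(S)=\sum_{v\in V(H)}d_H(v)$, the usual degree sum, which counts each edge once for every vertex it contains. By $m$-uniformity every edge of $H$ contains exactly $m$ vertices, so $\sum_{v\in V(H)}d_H(v)=m\sum_{e\in E^*(H)}m(e)=m|E(H)|$, and dividing by $m$ gives the claimed formula.

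I do not expect a genuine obstacle here, since the argument is routine once Lemma~\ref{Ob}(v) is available. The only point that requires care is the bookkeeping of $d^*_H$ on singletons versus non-singletons: the subtraction of $\sum_{|S|\geq2}d^*_H(S)$ is arranged precisely to cancel the non-singleton part of the volume, and looplessness is exactly the hypothesis that guarantees $d^*_H=d_H$ on singletons, so that the surviving sum is the familiar degree sum to which $m$-uniformity can be applied.
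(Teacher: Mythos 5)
Your proof is correct and takes essentially the same route as the paper's: invoke Lemma~\ref{Ob}(v) (with the loop term vanishing by looplessness) to get $\sum_{i=1}^{k}\nu_i(H)=vol(H)$, split the volume by the cardinality of $S$, identify the singleton contribution with the ordinary degree sum, and apply the $m$-uniform degree-sum identity $\sum_{v\in V(H)}d_H(v)=m|E(H)|$. The only cosmetic differences are that the paper cites Lemma~3.1(ii) of Part~I for that last identity where you re-derive it by double counting, and that your appeal to looplessness in the singleton step is unnecessary: by its definition, $d^*_H(S)=d_H(S)$ for every singleton $S$ regardless of whether $S$ is an edge.
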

\begin{proof}
	Since $H$ has no loops, $m>1$. By Lemma~\ref{Ob}$(v)$, we have
	\begin{align}
	\underset{i=1}{\overset{k}{\sum}}\nu_i(H)&=\underset{S\in I(H)}{\sum}d^*_H(S)\nonumber\\
	&=\underset{\underset{|S|\geq2}{S\in I(H),}}{\sum}d^*_H(S)+\underset{\underset{|S|=1}{S\in I(H),}}{\sum}d^*_H(S)\nonumber\\
	&=\underset{\underset{|S|\geq2}{S\in I(H),}}{\sum}d^*_H(S)+\underset{v\in V(H)}{\sum}d_H(v).\label{eq2}
	\end{align}
	According to~\cite[Lemma~3.1(ii)]{vishnu part1}: If $H$ is $m$-uniform, then 
	\begin{align}\label{L eq1}
\underset{v\in V(H)}{\sum}d_H(v)=m\cdot|E(H)|.
	\end{align}
	Applying~\eqref{L eq1} in~\eqref{eq2}, we get the result.
\end{proof}


\subsection{Bounds on the unified Laplacian eigenvalues of a hypergraph}
In this subsection, we bound the largest, second smallest unified Laplacian eigenvalue of a hypergraph by using its invariants. Also, we introduce the exact set diameter of a hypergraph and constrain it by using its unified Laplacian eigenvalues.
\begin{defn}\normalfont
	Let $H$ be a hypergraph and let $S\in I(H)$. \textit{The $e$-neighborhood of $S$} in $H$, denoted by $N_H(S)$, is the set of all neighbors of $S$ in $H$, i.e., $N_H(S)=\{S'\in I(H)~|~S\sim S'\}$.
\end{defn}
Further, we denote 
\begin{itemize}
	\item $m^*(H):=\underset{S\in I(H)}{\min}d^*_H(S)$,
	
	\item  $\hat{d^*}(H):=\underset{S\in I(H)}{\max}d^*_H(S)$, 
	
	\item $\overline{d^*}(H):=\displaystyle\frac{1}{k}\underset{S\in I(H)}{\sum}d^*_H(S)$ and  we call $\overline{d^*}(H)$ as the \textit{average modified unified degree} of $H$. 
\end{itemize}
 
\begin{thm}
	Let $H$ be a simple hypergraph with $e$-index $k~(\geq 2)$.
	Then,
	\[\nu_{k-1}(H)\leq\displaystyle\left(\frac{k}{k-1}\right)m^*(H) \leq\frac{2}{k-1}\underset{e\in E(H)}{\sum}|\tau(e)|.\]
\end{thm}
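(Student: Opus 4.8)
The plan is to read both inequalities through the associated graph $G_H$. Since $H$ is simple, Lemma~\ref{Ob} tells us that $\mathbf{U}^{\mathbf{L}}(H)=L(G_H)$ is positive semi-definite (part~(ii)) with row and column sums zero (part~(i)); hence the all-ones vector $\mathbf{1}$ is an eigenvector with eigenvalue $0$, and positive semi-definiteness forces $0$ to be the smallest eigenvalue, i.e.\ $\nu_k(H)=0$. Consequently $\nu_{k-1}(H)$ is the second smallest eigenvalue, and by Courant--Fischer it admits the variational description
\[
\nu_{k-1}(H)=\min_{\substack{x\neq \mathbf{0}\\ x\perp \mathbf{1}}}\frac{x^{T}\mathbf{U}^{\mathbf{L}}(H)\,x}{x^{T}x}.
\]
Every upper bound on this minimum can be produced by evaluating the Rayleigh quotient on a single well-chosen test vector orthogonal to $\mathbf{1}$.

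For the first inequality I would choose $S_0\in I(H)$ attaining $d^*_H(S_0)=m^*(H)$ and take $x=e_{S_0}-\frac{1}{k}\mathbf{1}$, where $e_{S_0}$ is the coordinate vector at $S_0$. Then $x\perp\mathbf{1}$, a direct computation gives $x^{T}x=\frac{k-1}{k}$, and since $\mathbf{U}^{\mathbf{L}}(H)\mathbf{1}=\mathbf{0}$ (so the $\mathbf{1}$-contributions drop out) one gets $x^{T}\mathbf{U}^{\mathbf{L}}(H)\,x=e_{S_0}^{T}\mathbf{U}^{\mathbf{L}}(H)\,e_{S_0}=d^*_H(S_0)=m^*(H)$. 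Substituting into the Rayleigh quotient yields $\nu_{k-1}(H)\le \frac{k}{k-1}m^*(H)$.

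The second inequality I would obtain by comparing the minimum modified unified degree with the average one. Since $H$ is simple it has no loops, so the singleton-edge term in Lemma~\ref{Ob}(v) vanishes and
\[
vol(H)=\sum_{S\in I(H)}d^*_H(S)=\sum_{i=1}^{k}\nu_i(H)=2\sum_{e\in E(H)}|\tau(e)|,
\]
using that summing $|\tau(e)|$ over the multiset $E(H)$ equals $\sum_{e\in E^*(H)}m(e)|\tau(e)|$. Because a minimum never exceeds the corresponding average, $m^*(H)\le \overline{d^*}(H)=\frac{1}{k}vol(H)=\frac{2}{k}\sum_{e\in E(H)}|\tau(e)|$; multiplying by $\frac{k}{k-1}$ gives the claimed bound.

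Neither step is computationally heavy; the point requiring the most care is the structural claim in the first paragraph that $\nu_k(H)=0$ with eigenvector $\mathbf{1}$, since the variational characterization of $\nu_{k-1}(H)$ over $\mathbf{1}^{\perp}$ rests on it — and this is precisely where simplicity of $H$ enters, through positive semi-definiteness and zero row sums in Lemma~\ref{Ob}. A secondary bookkeeping point is correctly matching the multiset sum $\sum_{e\in E(H)}|\tau(e)|$ in the target with the quantity $2\sum_{e\in E^*(H)}m(e)|\tau(e)|$ delivered by Lemma~\ref{Ob}(v).
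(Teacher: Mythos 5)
Your proof is correct and follows essentially the same route as the paper: both arguments rest on Lemma~\ref{Ob}(i)--(ii) (zero row sums plus positive semi-definiteness, so $\mathbf{U}^{\mathbf{L}}(H)\mathbf{1}=\mathbf{0}$ and $\nu_k(H)=0$) for the first inequality, and on the minimum-versus-average comparison $k\,m^*(H)\le vol(H)$ combined with Lemma~\ref{Ob}(v) (loop terms vanishing by simplicity) for the second. The only difference is presentational: where the paper cites Fiedler's lemma for the bound $\nu_{k-1}(H)\le\bigl(\tfrac{k}{k-1}\bigr)\min_i m_{ii}$, you re-derive exactly that statement via Courant--Fischer with the test vector $e_{S_0}-\tfrac{1}{k}\mathbf{1}$, which is a correct, self-contained unfolding of the cited lemma's proof.
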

\begin{proof}
	By Lemma~\ref{Ob}$(i)$, it is clear that $L(H)J_{ k\times1}=0$.  Therefore, the first inequality follows from Lemma~\ref{Ob}$(ii)$ and \cite[Lemma]{Fiedler}:``Let $M=(m_{ij})$ be a symmetric positive semi-definite $n\times n$ matrix and $x\in \mathbb{R}^n$ be such
	that $Mx=\mathbf{0}$. 
	Then the second smallest eigenvalue $\lambda$ of $M$ satisfies $\lambda\leq (\frac{n}{n-1})\underset{i}{\min}\{m_{ii}\}$".
	
It is clear that
	\begin{equation}\label{eq1}
	k\min\{d^*_H(S_i)~|~S_i\in I(H)~\text{for}~i=1,2,\dots,k\}\leq \underset{i=1}{\overset{k}{\sum}}d^*_H(S_i)=vol(H).
	\end{equation}
	Dividing by $k-1$ on both sides of~\eqref{eq1}, and applying Lemma~\ref{Ob}$(v)$ in it, we get the second inequality. 	
\end{proof}
\begin{thm}
	Let $H$ be a simple hypergraph with $e$-index $k$. Let $I(H)=\{S_1,S_2,\dots,S_k\}$.
	Then, we have the following.
	\begin{itemize}
		\item [(i)] If $d^*_H(S_1)\geq d^*_H(S_2)\geq\dots\geq d^*_H(S_k)$, then $\underset{i=1}{\overset{t}{\sum}}\nu_i(H)\geq\underset{i=1}{\overset{t}{\sum}}d^*_H(S_i)$ for $t\in\{1,2,\dots,k\}$. The equality holds when $t=k$.		
		\item[(ii)] $\big(\frac{k-1}{k}\big)\nu_{k-1}(H)\leq \overline{d^*}(H)\leq\big(\frac{k-1}{k}\big)\nu_1(H)$.
	\end{itemize} 
\end{thm}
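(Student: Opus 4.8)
The plan is to reduce both parts to standard facts about the real symmetric matrix $\mathbf{U}^{\mathbf{L}}(H)$. Since $H$ is simple it has no loops, so the diagonal of $\mathbf{U}(H)$ vanishes and the diagonal entries of $\mathbf{U}^{\mathbf{L}}(H)=\mathbf{U}^{\mathbf{D}}(H)-\mathbf{U}(H)$ are exactly $d^*_H(S_1),\dots,d^*_H(S_k)$. Its eigenvalues are $\nu_1(H)\geq\cdots\geq\nu_k(H)$, and by Lemma~\ref{Ob}$(ii)$ the matrix is positive semi-definite. I will also use that $\mathrm{tr}(\mathbf{U}^{\mathbf{L}}(H))=\sum_{i}d^*_H(S_i)=vol(H)=k\,\overline{d^*}(H)$, which for simple $H$ coincides with $\sum_{i=1}^{k}\nu_i(H)$ by Lemma~\ref{Ob}$(v)$.

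For part~(i), the key tool is Schur's majorization theorem: the sorted eigenvalue vector of a real symmetric matrix majorizes its sorted diagonal vector. Concretely, I would invoke the Ky Fan maximum principle, $\sum_{i=1}^{t}\nu_i(H)=\max\{\mathrm{tr}(X^{T}\mathbf{U}^{\mathbf{L}}(H)X)\}$, the maximum being taken over all $k\times t$ matrices $X$ whose columns are orthonormal. Choosing $X$ to be the matrix whose columns are the standard unit vectors indexed by $S_1,\dots,S_t$ (which, under the hypothesis $d^*_H(S_1)\geq\cdots\geq d^*_H(S_k)$, are the $t$ positions carrying the largest diagonal entries) gives $\mathrm{tr}(X^{T}\mathbf{U}^{\mathbf{L}}(H)X)=\sum_{i=1}^{t}d^*_H(S_i)$, whence $\sum_{i=1}^{t}\nu_i(H)\geq\sum_{i=1}^{t}d^*_H(S_i)$. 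For $t=k$ both sides equal $\mathrm{tr}(\mathbf{U}^{\mathbf{L}}(H))$, so equality holds; this also follows directly from Lemma~\ref{Ob}$(v)$.

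For part~(ii), I would first note that by Lemma~\ref{Ob}$(i)$ the all-ones vector lies in the kernel of $\mathbf{U}^{\mathbf{L}}(H)$, and by positive semi-definiteness the smallest eigenvalue is therefore $\nu_k(H)=0$. Hence $k\,\overline{d^*}(H)=\sum_{i=1}^{k}\nu_i(H)=\sum_{i=1}^{k-1}\nu_i(H)$. The upper bound is immediate from $\sum_{i=1}^{k-1}\nu_i(H)\leq(k-1)\nu_1(H)$ and the lower bound from $\sum_{i=1}^{k-1}\nu_i(H)\geq(k-1)\nu_{k-1}(H)$, after dividing by $k$.

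The only non-elementary ingredient is the majorization (Schur / Ky Fan) inequality in part~(i); everything else is bookkeeping with the trace and the fact that $\nu_k(H)=0$. The main point to handle carefully is aligning the decreasing ordering of the $d^*_H(S_i)$ assumed in the hypothesis with the ``largest diagonal entries'' chosen in the Ky Fan argument, so that the selected unit-vector columns indeed produce $\sum_{i=1}^{t}d^*_H(S_i)$.
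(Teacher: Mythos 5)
Your proof is correct. Both parts check out: in part~(i), since $H$ is simple the diagonal of $\mathbf{U}(H)$ indeed vanishes, so the diagonal of $\mathbf{U}^{\mathbf{L}}(H)$ is exactly $(d^*_H(S_1),\dots,d^*_H(S_k))$, and the Ky Fan maximum principle applied to the coordinate vectors indexed by $S_1,\dots,S_t$ gives precisely $\sum_{i=1}^{t}\nu_i(H)\geq\sum_{i=1}^{t}d^*_H(S_i)$, with equality at $t=k$ by the trace; in part~(ii), the kernel vector $J_{k\times 1}$ together with positive semi-definiteness gives $\nu_k(H)=0$, and the two bounds follow from comparing $\sum_{i=1}^{k-1}\nu_i(H)=k\,\overline{d^*}(H)$ with $(k-1)\nu_1(H)$ and $(k-1)\nu_{k-1}(H)$.

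Where you differ from the paper is in packaging rather than substance. The paper's proof is a two-line reduction: it observes that $d^*_H(S_i)$ is the degree of the vertex $S_i$ in the associated graph $G_H$, that $\overline{d^*}(H)$ is the mean degree of $G_H$, and that $\mathbf{U}^{\mathbf{L}}(H)=L(G_H)$, and then cites the corresponding graph results (Theorem~7.1.3 and Remark~7.1.4 of Cvetkovi\'{c}--Rowlinson--Simi\'{c}), which are exactly the Schur majorization of the degree sequence by the Laplacian spectrum and the mean-degree bounds. You instead prove those two facts from scratch directly on $\mathbf{U}^{\mathbf{L}}(H)$, never invoking $G_H$. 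What your route buys is self-containedness and slightly greater generality of exposition (Schur/Ky Fan needs only symmetry for part~(i), and your part~(ii) makes explicit exactly which structural facts --- zero row sums and positive semi-definiteness --- force $\nu_k(H)=0$); what the paper's route buys is brevity and consistency with its global strategy, which is to translate every unified-Laplacian statement into a statement about $L(G_H)$ and quote the graph literature. One small remark: your citation of Lemma~\ref{Ob}$(v)$ for the identity $\sum_{i=1}^{k}\nu_i(H)=\mathrm{tr}(\mathbf{U}^{\mathbf{L}}(H))$ is unnecessary, since trace equals the sum of eigenvalues for any real symmetric matrix; the lemma is only needed if you want to re-express the trace in terms of edge data.
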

\begin{proof}
	For each $S_i\in I(H)$, $d^*_H(S_i)$ is the same as the degree of the vertex $S_i$ in $G_H$. Moreover, the mean degree of $G_H$ is the same as the average modified unified degree of $H$.
	Thus the results~(i) and (ii) directly follows from~\cite[p. 186, Theorem~7.1.3, Remark~7.1.4]{cvetko}.
\end{proof}
In the following result, we asserts some bounds on the largest unified Laplacian eigenvalue of a simple hypergraph.
\begin{pro}	Let $H$ be a simple hypergraph with $e$-index $k$. Then, we have the following.
	\begin{itemize}
		\item [(i)] $\nu_1(H)\leq k$.
		\item[(ii)]$\nu_1(H)\leq \max\{d^*_H(S)+d^*_H(S')~|~S,S'\in I(H)~\text{and}~S\sim S'\}$.
		\item[(iii)] $\displaystyle\nu_1(H)\leq \max\biggl\{\frac{d^*_H(S)[d^*_H(S)+\zeta(S)]+d^*_H(S')[d^*_H(S')+\zeta(S')]}{d^*_H(S)+d^*_H(S')}~\bigg|~S,S'\in I(H)~\text{and}~S\sim S'\biggr\}$, where $\zeta(S)=\frac{1}{|N_H(S)|}\underset{S'\in N_H(S)}{\sum}d^*_H(S')$.
		\item[(iv)] $\nu_1(H)\geq \max\{\sqrt{(d^*_H(S)-d^*_H(S'))^2+4U_{SS'}}~|~S,S'\in I(H)~\text{and}~S\neq S'\}$, where $\mathbf{U}_{SS'}$ is the $(S,S')$-th entry of $\mathbf{U}(H)$.
	\end{itemize}
\end{pro}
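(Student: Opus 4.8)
The plan is to reduce every inequality to a statement about the ordinary graph Laplacian of $G_H$ and then invoke or reprove the corresponding classical bound. The crucial preliminary observation is that when $H$ is simple the associated graph $G_H$ is itself a \emph{simple} loopless graph: if $\{S,S'\}$ is a $2$-partition of an edge $e$ of $H$, then $e=S\cup S'$ is recovered uniquely from the pair, so a given partition arises from at most one edge and hence $S\overset{c}{\sim}S'$ forces $c=1$. Consequently $\mathbf{U}_{SS'}\in\{0,1\}$ for $S\neq S'$, and by the discussion preceding Lemma~\ref{Ob} we have $\mathbf{U}^{\mathbf{L}}(H)=L(G_H)$ with $\deg_{G_H}(S)=d^*_H(S)$, with $S\sim S'$ exactly when $SS'$ is an edge of $G_H$, with $N_H(S)$ the graph neighbourhood of $S$, and with $\zeta(S)$ the average degree of the neighbours of $S$ in $G_H$. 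Thus $\nu_1(H)=\lambda_{\max}(L(G_H))$, and each of (i)--(iv) becomes a known bound on the largest Laplacian eigenvalue of a simple graph, rewritten in the $I(H)$-dictionary.

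For (i) I would apply the classical inequality $\lambda_{\max}(L(G))\le |V(G)|$, valid for simple graphs via $L(G)+L(\overline{G})=|V(G)|\,I-J$ together with the positive semidefiniteness of $L(\overline{G})$ on the orthogonal complement of the all-ones vector; since $|V(G_H)|=|I(H)|=k$ this gives $\nu_1(H)\le k$. For (ii) the cleanest route avoids any external citation by using the paper's own factorisation: by Note~\ref{note1}, $\mathbf{U}^{\mathbf{L}}(H)=\mathbf{R}(H)\mathbf{R}(H)^T$, so $\mathbf{U}^{\mathbf{L}}(H)$ and $\mathbf{R}(H)^T\mathbf{R}(H)$ share the same nonzero eigenvalues. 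The latter matrix is indexed by the arcs, equivalently the edges of $G_H$; its diagonal entries equal $2$ and its off-diagonal entries are $\pm 1$ precisely for arcs sharing an endpoint. Applying Gershgorin's theorem to an arc coming from the pair $S\sim S'$ bounds the eigenvalue by $2+(d^*_H(S)-1)+(d^*_H(S')-1)=d^*_H(S)+d^*_H(S')$, and maximising over $S\sim S'$ yields (ii).

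Part (iii) is the main obstacle, since it is the genuine refinement and not a one-line reduction. It is the edge-averaged Merris/Anderson--Morley-type bound for $\lambda_{\max}(L(G_H))$, with $\zeta(S)$ playing the role of the average degree of the neighbours of $S$. I would either cite this bound directly from the spectral graph theory literature (see, e.g., \cite{cvetko}) after checking that the displayed expression matches it under the dictionary above, or reprove it by the standard argument: take a unit eigenvector $x$ for $\nu_1(H)$, write the eigenequation $\nu_1(H)\,x_S=d^*_H(S)x_S-\sum_{S'\sim S}x_{S'}$ at a coordinate $S$ carrying the dominant eigenvector entry and at its dominant neighbour $S'$, and combine the two relations while estimating $\sum_{S'\sim S}x_{S'}$ through the neighbour degrees. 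Matching the output \emph{exactly} to the fraction $\tfrac{d^*_H(S)[d^*_H(S)+\zeta(S)]+d^*_H(S')[d^*_H(S')+\zeta(S')]}{d^*_H(S)+d^*_H(S')}$, with $\zeta$ in the role of the mean neighbour degree, is the delicate bookkeeping and is where I expect the real work to lie.

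Finally, for (iv) I would use Cauchy interlacing. For any $S\neq S'$ the principal submatrix of $\mathbf{U}^{\mathbf{L}}(H)$ on rows and columns $\{S,S'\}$ is $\left(\begin{smallmatrix} d^*_H(S) & -\mathbf{U}_{SS'}\\ -\mathbf{U}_{SS'} & d^*_H(S')\end{smallmatrix}\right)$, whose largest eigenvalue $\tfrac{d^*_H(S)+d^*_H(S')}{2}+\tfrac{1}{2}\sqrt{(d^*_H(S)-d^*_H(S'))^2+4\,\mathbf{U}_{SS'}^2}$ is at most $\nu_1(H)$. It then remains to check the elementary inequality that this quantity dominates $\sqrt{(d^*_H(S)-d^*_H(S'))^2+4\,\mathbf{U}_{SS'}}$. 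Since $\mathbf{U}_{SS'}\in\{0,1\}$ we have $\mathbf{U}_{SS'}^2=\mathbf{U}_{SS'}$, so the two radicands coincide and the claim reduces to $d^*_H(S)+d^*_H(S')\ge\sqrt{(d^*_H(S)-d^*_H(S'))^2+4\,\mathbf{U}_{SS'}}$, i.e. $4\,d^*_H(S)\,d^*_H(S')\ge 4\,\mathbf{U}_{SS'}$. This holds because $d^*_H(S)\,d^*_H(S')\ge \mathbf{U}_{SS'}$ is immediate when $S\sim S'$ and trivial otherwise. Maximising over $S\neq S'$ then gives (iv).
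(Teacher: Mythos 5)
Your proposal is correct, and it rests on the same reduction the paper itself uses: since $H$ is simple, $\mathbf{U}^{\mathbf{L}}(H)=L(G_H)$, so each bound becomes a statement about the largest Laplacian eigenvalue of the associated graph. The difference lies in what happens after the reduction. The paper disposes of all four parts by citing \cite[Proposition~7.3.3, Theorems~7.3.4, 7.3.5, 7.3.6]{cvetko}, whereas you cite only for (iii) (exactly the paper's move, so no gap there) and give self-contained proofs of (i), (ii) and (iv): the complement identity $L(G)+L(\overline{G})=kI-J$ for (i), Gershgorin applied to $\mathbf{R}(H)^T\mathbf{R}(H)$ — available here precisely because of Note~\ref{note1} — for (ii), and $2\times 2$ principal-submatrix interlacing for (iv). These are the standard proofs of the classical graph results, so nothing conceptually new is introduced, but your write-up supplies two details the paper glosses over: the explicit check that simplicity of $H$ forces $G_H$ to be simple (each $2$-partition $\{S,S'\}$ recovers its edge as $S\cup S'$, so $c=1$), which is needed before the cited theorems for simple graphs can be invoked at all; and the observation $\mathbf{U}_{SS'}^2=\mathbf{U}_{SS'}$ together with $d^*_H(S)d^*_H(S')\geq \mathbf{U}_{SS'}$, which is the bookkeeping required to match the radicand in (iv) to its graph-theoretic counterpart. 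So your route buys self-containedness and rigor at the interface between the hypergraph and graph statements, at the cost of length; the paper's route is shorter but leans entirely on the external reference.
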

\begin{proof}
	Since the $\mathbf{U}^{\mathbf{L}}$-spectrum of the hypergraph $H$ and the spectrum of $L(G_H)$ are the same, the proofs directly follows from~\cite[Proposition~7.3.3 and Theorems~7.3.4, 7.3.5, 7.3.6]{cvetko}. 
\end{proof}
\begin{lemma}\label{Knarg}
	Let $H$ be a hypergraph with $e$-index $k$. Then we have the following.
	\begin{itemize}
		\item[(i)] $G_H\cong P_k$ if and only if $H\cong P_k$.
		\item[(ii)] $G_H\cong C_k$ if and only if $H\cong C_k$.	
		\item[(iii)] $G_H\cong K_k$ if and only if $H\cong K_k$.
		\item[(iv)]	$G_H\cong K_{n_1,n_2,\dots,n_r}$ if and only if $H\cong K_{n_1,n_2,\dots,n_r}$.
	\end{itemize}
\end{lemma}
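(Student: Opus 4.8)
The plan is to prove both implications, handling the four items through a common reduction. For each named graph $X\in\{P_k,C_k,K_k,K_{n_1,\dots,n_r}\}$ the backward direction is immediate: if $H\cong X$ then $H$ is a loopless simple graph, so every element of $I(H)$ is a singleton and $\{u\}\sim\{v\}$ in $H$ exactly when $\{u,v\}\in E(H)$; hence $G_H\cong H\cong X$, as already noted in Section~\ref{Lsec2}. So I would concentrate on the forward direction, where the key reduction is: if $G_H\cong X$ then $H$ must be $2$-uniform and simple, and then $G_H\cong H$ forces $H\cong X$. Simplicity is easy, since a multiple edge $\{u,v\}$ of multiplicity $c\ge 2$ would give $\{u\}\overset{c}{\sim}\{v\}$, a multi-edge in $G_H$, contradicting that $X$ is simple. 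Since non-singleton elements of $I(H)$ arise only from edges of size $\ge 3$, the whole content is to rule out an edge $e$ with $|e|=m\ge 3$.

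For (iii) and (iv) I would exploit that neighbours in $G_H$ are \emph{disjoint} subsets, so for any part $S$ with $|S|\ge 2$ and any $w\in S$ one has $\{w\}\not\sim S$. An edge of size $m\ge 3$ produces the non-singleton part $T=e\setminus\{v\}$ with $|T|=m-1\ge 2$, and then $\{w\}\not\sim T$ for $w\in T$ is a non-adjacency, impossible in $K_k$; this settles (iii). For (iv) I would instead use the three distinct parts $\{v_1\},\,\{v_1,v_2\},\,e\setminus\{v_1\}\in I(H)$: the first two intersect, and the last two intersect in $v_2$, so both pairs are non-adjacent, yet $\{v_1\}\sim e\setminus\{v_1\}$ because their union is $e\in E(H)$. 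In a complete multipartite graph, non-adjacency of distinct vertices is the equivalence relation ``lie in the same part'', and transitivity would force $\{v_1\}\not\sim e\setminus\{v_1\}$, a contradiction. Thus in both cases $H$ has no edge of size $\ge 3$.

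For (i) and (ii) the feature I would use is that $P_k$ and $C_k$ have maximum degree at most $2$, with exactly two vertices of degree one for $P_k$ and none for $C_k$. Assume $G_H\cong P_k$ or $C_k$ but that some edge has size $\ge 3$, and fix a \emph{maximum-size} edge $e$ with $|e|=m\ge 3$. Every size-$2$ subset $\{v,w\}\subseteq e$ is a part with $1\le d^*_H(\{v,w\})\le 2$, the lower bound coming from $e$ itself. If $\{v,w\}$ has degree two there is a second edge $f\ne e$ containing both $v$ and $w$; since each vertex of $e$ lies in at most two edges (as $d^*_H(\{v\})\le 2$), this $f$ is the unique ``external'' edge of $v$ and of $w$, and maximality of $e$ gives $f\cap e\subsetneq e$. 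Grouping the degree-two pairs by their external edge shows the sets $A_j=f^{(j)}\cap e$ are pairwise disjoint with $|A_j|\le m-1$, and the number of degree-two pairs equals $\sum_j\binom{|A_j|}{2}\le\binom{m-1}{2}$. Hence the number of degree-one (leaf) pairs is at least $\binom{m}{2}-\binom{m-1}{2}=m-1$.

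When $m\ge 4$ this already yields at least three leaves in $G_H$, contradicting both $P_k$ and $C_k$. The case $m=3$ is the delicate one and is where I expect the main obstacle: the bound gives only two leaf-pairs, so I would bootstrap. If all three pairs of $e$ are leaves we are done; otherwise exactly one pair, say $\{b,c\}$, has degree two, and its external edge is necessarily a size-$3$ edge $\{b,c,x\}$ with $x\notin e$. Applying the same count to this maximum-size edge gives at least two leaf-pairs among $\{b,c\},\{b,x\},\{c,x\}$, and since $\{b,c\}$ is not a leaf, both $\{b,x\}$ and $\{c,x\}$ are; these are distinct from the two leaves $\{a,b\},\{a,c\}$ of $e$, producing at least four leaves and again a contradiction. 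Thus $H$ is $2$-uniform in every case, and the reduction yields $H\cong X$. The part I would watch most carefully is this $m=3$ bootstrap, together with checking throughout that the leaf-pairs produced are genuinely distinct vertices of $G_H$; by contrast, the complete and complete-multipartite cases fall out cleanly from the disjointness observation.
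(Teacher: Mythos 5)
Your proof is correct, and its skeleton is the same as the paper's: the backward directions are immediate from $G_H\cong H$ for a loopless graph $H$, and the forward directions reduce to showing that $H$ has no multiple edges and no edge of cardinality at least $3$, whence $H$ is a simple graph and $H\cong G_H\cong X$. Your arguments for (iii) and (iv) are in substance the paper's own: for $K_k$ the paper likewise exhibits two non-disjoint parts of a large edge, which cannot be adjacent in $G_H$; for $K_{n_1,n_2,\dots,n_r}$ it likewise takes two adjacent parts $S_1\sim S_2$ of a large edge and a third element of $I(H)$ meeting both, deriving the contradiction from the multipartite partition (your transitivity-of-non-adjacency phrasing is the same argument). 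The genuine difference is in (i) and (ii): the paper disposes of each in a single sentence, asserting for the path that an edge of cardinality greater than $2$ forces at least three vertices of degree one in $G_H$ (no justification is given, and the assertion is false without the maximum-degree-two hypothesis --- for the complete $3$-uniform hypergraph on four vertices, $G_H$ has minimum degree $2$), while its sentence for the cycle is garbled. Your counting argument --- maximum-size edge $e$, pairwise disjointness of the traces $A_j=f^{(j)}\cap e$ forced by $d_H(v)\le 2$, the bound $\binom{m}{2}-\binom{m-1}{2}=m-1$ on leaf pairs, and the $m=3$ bootstrap --- is exactly the missing justification, so on these two items your proof is more complete than the published one. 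One cosmetic remark: state simplicity for repeated edges of arbitrary cardinality (any repeated edge duplicates the adjacency between the two sides of each of its $2$-partitions), not only for repeated $2$-element edges; the argument is identical.
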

\begin{proof}
	It is clear that if $H$ is a graph, then $G_H\cong H$. So, in all the parts, it is enough to prove the other direction.
	\begin{itemize}
		\item[(i)] We assume that $G_H$ is a path. Suppose $H$ contains an edge of cordinality greater than $2$, then $G_H$ contains atleast three vertices of degree one, which is not possible, since $G_H$ is a path. Thus, $H$ is a graph and so $H\cong G_H$.
		\item [(ii)]   We assume that $G_H$ is a cycle. Suppose $H$ contains an edge of cordinality greater than $2$, then $G_H$ must contain an edge that does not belong to $G_H$, which is not possible, since $G_H$ is a cycle. Thus, $H$ is a graph and so $H\cong G_H$.
		
		\item[(iii)] We assume that $G_H\cong K_k$. Then clearly any two vertices of $H$ are joined by an edge.  
		Suppose $H$ has an edge $e$ of cordinality greater than $2$. Then there exists a pair of non-disjoint proper subsets of $e$ such that the vertices in $G_H$ corresponding to this pair are not adjacent, a contradiction. Thus, $H$ has no edge of cordinality greater than $2$. So, $H$ is the complete graph on $k$ vertices.
		
		\item[(iv)] We assume that $G_H$ is a complete multipartite graph with vertex set partitions $X_i$, $i=1,2,\dots,r$. First we show that $H$ is a graph. Suppose $H$ has an edge $e$ of cordinality greater than two. Let $S_1$ and $S_2$ be parts of $e$. Then $S_1, S_2\notin X_i$ for any $i=1,2,\dots,r$, since $G_H$ is a multipartite graph. Without loss of generality, we assume that $S_1\in X_1$ and $S_2\in X_2$. Since $|e|>2$, either $S_1$ or $S_2$ is of cordinality greater than one and so there exists $S_3\in I(H)$ such that $S_3\cap S_i\neq \Phi$ for each $i=1,2$. Thus, being $G_H$ is a complete multipartite graph, $S_3\notin X_i$ for any $i=1,2,\dots,r$, which is not possible. Hence, $H$ cannot have any edge of cordinality greater than two. Therefore, $H$ is a graph. Proof is complete, since for a graph $H$, its associated graph $G_H$ is isomorphic to $H$. 
	\end{itemize}
\end{proof}

Let $H$ be a hypergraph. Let $e\in E(H)$. For, any integer $r$, $0<r\leq m(e)$, $H-e^r$ denotes the hypergraph obtained from $H$ by deleting the $r$ copies of the edge $e$ from $H$.
\begin{thm}\label{L thm}
	Let $H$ be a hypergraph with $e$-index $k$. Suppose $H$ has a non-loop edge $e$ such that $I(H-e^r)=I(H)$, where $0<r\leq m(e)$. Then the following hold:\\
	If $|e|=2$, then
	\begin{center} $0=\nu_k(H-e^r)=\nu_k(H)\leq \nu_{k-1}(H-e^r)\leq \nu_{k-1}(H)\leq\dots\leq\nu_2(H)\leq\nu_{1}(H-e^r)\leq \nu_{1}(H)$;
	\end{center} otherwise,
	\begin{center} $0=\nu_k(H-e^r)=\nu_k(H)\leq \nu_{k-1}(H-e^r)+2r\leq \nu_{k-1}(H)+2r\leq\nu_{k-2}(H-e^r)+4r\leq \nu_{k-2}(H)+4r\leq\dots\leq\nu_2(H)+2r(k-2)\leq\nu_{1}(H-e^r)+2r(k-1)\leq \nu_{1}(H)+2r(k-1)$.
	\end{center}
\end{thm}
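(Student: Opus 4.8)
The plan is to realise $\mathbf{U}^{\mathbf{L}}(H)$ as a positive semi-definite perturbation of $\mathbf{U}^{\mathbf{L}}(H-e^r)$ and then read the whole chain off the spectrum of that perturbation together with Weyl's inequalities. First I would fix the index set: since $I(H-e^r)=I(H)$, both $\mathbf{U}^{\mathbf{L}}(H)$ and $\mathbf{U}^{\mathbf{L}}(H-e^r)$ are $k\times k$ matrices indexed by the same set $I(H)$, so their difference is meaningful. Passing to associated graphs, $\mathbf{U}^{\mathbf{L}}(H)=L(G_H)$ and $\mathbf{U}^{\mathbf{L}}(H-e^r)=L(G_{H-e^r})$, and deleting $r$ copies of the non-loop edge $e$ removes from $G_H$ exactly $r$ parallel edges for each $2$-partition $\{S,S'\}\in\tau(e)$ and nothing else. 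Hence
$$\mathbf{U}^{\mathbf{L}}(H)-\mathbf{U}^{\mathbf{L}}(H-e^r)=r\sum_{\{S,S'\}\in\tau(e)}(\mathbf{e}_S-\mathbf{e}_{S'})(\mathbf{e}_S-\mathbf{e}_{S'})^T=:P,$$
where $\mathbf{e}_S$ is the standard basis vector indexed by $S\in I(H)$. I would verify this entrywise from the definition of $d^*_H$: for every proper nonempty subset $S\subsetneq e$ one has $d^*_{H-e^r}(S)=d^*_H(S)-r$ (the $-m(S)$ correction is unaffected because $S\ne e$), which fixes the diagonal, while each off-diagonal entry $\mathbf{U}_{S,\,e\setminus S}$ drops by exactly $r$ (indeed $\{S,e\setminus S\}$ can only be a partition of $e$ itself, so no other edge contributes to it). All rows and columns indexed by elements of $I(H)$ that are not proper subsets of $e$ are untouched.

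Second, I would compute the spectrum of $P$. The pairs $\{S,e\setminus S\}$ make the support of $P$ a perfect matching on the $2^{|e|}-2$ proper nonempty subsets of $e$, each $S$ being joined only to its complement. Thus $P=r\,L(M_e)$, where $M_e$ is a matching with $|\tau(e)|$ edges, and the Laplacian of a matching is block-diagonal with blocks $r\left(\begin{smallmatrix}1&-1\\-1&1\end{smallmatrix}\right)$. Consequently $P$ is positive semi-definite with eigenvalue $2r$ of multiplicity $|\tau(e)|$ and eigenvalue $0$ otherwise. In particular $\lambda_1(P)=2r$, while $\lambda_2(P)=0$ precisely when $|\tau(e)|=1$, i.e. when $|e|=2$, whereas $\lambda_2(P)=2r$ when $|e|>2$ (since then $|\tau(e)|=2^{|e|-1}-1\ge 2$).

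Third, I would derive the two families of inequalities from $\mathbf{U}^{\mathbf{L}}(H)=\mathbf{U}^{\mathbf{L}}(H-e^r)+P$. Monotonicity under the positive semi-definite summand $P$ gives $\nu_j(H)\ge \nu_j(H-e^r)$ for every $j$. Weyl's inequality $\lambda_{i+1}(B+P)\le\lambda_i(B)+\lambda_2(P)$ with $B=\mathbf{U}^{\mathbf{L}}(H-e^r)$ gives $\nu_j(H)\le \nu_{j-1}(H-e^r)+\lambda_2(P)$ for $j=2,\dots,k$. Substituting $\lambda_2(P)=0$ when $|e|=2$ yields $\nu_j(H)\le\nu_{j-1}(H-e^r)$, and interleaving with the monotonicity inequalities produces the first chain; substituting $\lambda_2(P)=2r$ when $|e|>2$ yields $\nu_j(H)\le\nu_{j-1}(H-e^r)+2r$, and adding the shift $2r(k-j)$ to the $j$-th terms turns the interleaving into the second chain. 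Finally $\nu_k(H-e^r)=\nu_k(H)=0$ because both matrices have zero row sums (Lemma~\ref{Ob}(i)) and are positive semi-definite, so $0$ is their least eigenvalue.

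The main obstacle will be the bookkeeping in the first step: one must check carefully, using the exact definition of the modified unified degree $d^*_H$ and the hypothesis $I(H-e^r)=I(H)$, that deleting $r$ copies of $e$ perturbs $\mathbf{U}^{\mathbf{L}}$ by \emph{precisely} $r$ times the Laplacian of the complementation matching $M_e$; in particular that the included-edge correction keeps the diagonal entry of $e$ itself fixed and that no coincidental adjacency from another edge contaminates $P$. Once $P$ is pinned down, the fact that the complementation graph $M_e$ is a perfect matching, so that all nonzero eigenvalues of $P$ coincide and equal $2r$, is exactly what forces the uniform shift $2r$ and makes Weyl's inequality deliver the stated chain; the remaining linear algebra is routine.
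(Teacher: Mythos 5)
Your proof is correct and follows essentially the same route as the paper's: both identify the difference $\mathbf{U}^{\mathbf{L}}(H)-\mathbf{U}^{\mathbf{L}}(H-e^r)$ as a positive semi-definite matrix with eigenvalues $2r$ (multiplicity $|\tau(e)|$) and $0$, then combine the monotonicity inequality $\nu_i(H)\geq\nu_i(H-e^r)$ with Weyl's inequality $\nu_{j+1}(H)\leq\nu_j(H-e^r)+\lambda_2$ to build the two chains. The only cosmetic difference is that you write the perturbation as a sum of rank-one matrices $r(\mathbf{e}_S-\mathbf{e}_{S'})(\mathbf{e}_S-\mathbf{e}_{S'})^T$ while the paper reorders $I(H)$ to display it as a block-diagonal matrix of $2\times 2$ blocks, and your entrywise verification of the perturbation (including the observation that the diagonal entry of $e$ itself is unchanged) is actually more careful than the paper's.
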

\begin{proof}
	If needed, we rearrange the elements of $I(H)$ as $S_1,S_2,\dots,S_t,S_{t+1},\dots,S_k$ such that $\{S_i,S_{i+1}\}\in\tau(e)$, for $i=2,3,\dots,t-1$, where $t=2|\tau(e)|$. 
	Since $I(H-e^r)=I(H)$, 	we can see that 
	\begin{align}\label{eq*}
	\mathbf{U}^{\mathbf{L}}(H-e^r)=\mathbf{U}^{\mathbf{L}}(H)-N,
	\end{align} where \begin{align} 
	N=\begin{pmatrix}  
	\begin{array}{c|c}
	\begin{pmatrix}
	r & -r & 0 & 0& \cdots& 0 &0\\
	-r & r &0 & 0& \cdots & 0 & 0\\
	0 & 0 & r & -r& \cdots& 0 &0\\
	0 & 0 &-r & r& \cdots & 0 & 0\\
	\vdots&\vdots&\vdots&\vdots&\ddots&\vdots&\vdots\\
	0 & 0 & 0 & 0& \cdots& r &-r\\
	0 & 0 &0 & 0& \cdots & -r & r\\
	\end{pmatrix}_{t \times t} &\textbf{0}\\
	\hline 
	\textbf{0} &\textbf{0}  
	\end{array}  
	\end{pmatrix}_{k\times k}.\end{align}
	
	The eigenvalues of $N$ are $0$ with multiplicity $k-\frac{t}{2}$ and $2r$ with multiplicity $\frac{t}{2}$. Notice that the second largest eigenvalue $\lambda_2(N)$ of $N$ is non-zero if and only if $t=2$. 
	Thus, we have
	\begin{align}\label{lambda2}
	\lambda_2(N)=\begin{cases}
	0, ~\text{if}~|e|=2;\\	
	2r,~\text{otherwise}.
	\end{cases}\end{align}
	
	It is known~(c.f. \cite[Theorem~1.3.15]{cvetko}) that if $A$ and $B$ are $n\times n$ Hermitian matrices, then 
	\begin{eqnarray}
	\lambda_i(A+B)\leq \lambda_j(A)+\lambda_{i-j+1}(B) ~\text{for}~ n\geq i\geq j\geq 1;\label{e1}\\
	\lambda_i(A+B)\geq \lambda_j(A)+\lambda_{i-j+n}(B) ~\text{for}~ n\geq j\geq i\geq 1.\label{e2}
	\end{eqnarray}
	Now, taking $A=\mathbf{U}^{\mathbf{L}}(H-e^r)$ and $B=N$, we get from~\eqref{eq*} that $A+B=\mathbf{U}^{\mathbf{L}}(H)$.
	Applying these in inequalities~\eqref{e1} and~\eqref{e2}, we get
	\begin{eqnarray}
	\nu_i(H)\leq \nu_j(H-e^r)+\lambda_{i-j+1}(N) ~\text{for}~ k\geq i\geq j\geq 1;\label{e3}\\
	\nu_i(H)\geq \nu_j(H-e^r)+\lambda_{i-j+k}(N) ~\text{for}~ k\geq j\geq i\geq 1.\label{e4}
	\end{eqnarray}
	Taking $i=j$ in \eqref{e4} and since $\lambda_k(N)=0$, we get
	\begin{align}
	\nu_i(H)\geq \nu_i(H-e^r)~\text{for}~ i=1,2,\dots,k.\label{e5}
	\end{align}
	Taking $i=j+1$ in \eqref{e3} and applying the value of $\lambda_2(N)$ given in~\eqref{lambda2} in it for $j=1,2,\dots,k-1$, we get
	\begin{eqnarray}
	\nu_{j+1}(H)\leq \nu_j(H-e^r),~\text{if}~|e|=2;\label{e6}\\
	\nu_{j+1}(H)\leq \nu_j(H-e^r)+2r,~\text{otherwise.}\label{e7}	 
	\end{eqnarray}
	It is clear that  $\nu_{k}(H)=\nu_k(H-e^r)=0$.
	
	If $|e|=2$, then the result follows from~\eqref{e5} and~\eqref{e6}.
	Otherwise, we have the following inequalities by substituting $j=1,2,\dots,k-1$ in~\eqref{e7} successively and applying~\eqref{e5}:
	\begin{align*}
	\nu_{t}(H)\leq \nu_{t-1}(H-e^r)+2r\leq\nu_{t-1}(H)+2r~ \text{for}~t=2,3,\dots,k.
	\end{align*}
	Result follows from the above inequalities.
\end{proof}

\subsubsection{New structures and invariants of hypergraphs and unified Laplacian eigenvalues}
\begin{defn}\normalfont
We call an exact path in which all the edges are distinct as an \textit{edge exact path}.
	An exact path of length at least two in a hypergraph is said to be an \textit{internal unified path} if all pairs of its parts are disjoint except the pair that contains its initial part and terminal part. 
\end{defn}
\begin{defn}\normalfont\label{eec}
	A hypergraph $H$ is said to be \textit{edge exact connected} if for any two distinct vertices $u$ and $v$ in $H$, there exists $S, S'\in I(H)$ with $u\in S$ and $v\in S'$ such that $S$ and $S'$ are joined by an edge exact path in $H$. 
\end{defn}
\begin{defn}\normalfont\label{eed}
	Let $H$ be a hypergraph. The \textit{edge exact distance} or simply the \textit{$ee$-distance} between distinct vertices $u$ and $v$ in $H$, denoted by $eed_H(u,v)$, is the length of a shortest edge exact path joining $S, S'\in I(H)$ with $u\in S$ and $v\in S'$, if such an edge exact path exists; $\infty$, otherwise. For a vertex $u$ in $H$, we define $eed_H(u,u)=0.$ 
\end{defn}

\begin{defn}\normalfont\label{eeD}
	The \textit{edge exact diameter} of an edge exact connected hypergraph $H$ is the maximum of the $ee$-distance between all the pairs of vertices in $H$ and we denote it by $EED(H)$,   
	i.e., $EED(H)=\max\{eed_H(u,v)~|~u,v\in V(H)\}$.
\end{defn}

The hypergraph shown in Figure~\ref{fig2} is edge exact connected. It is clear that the edge exact distance defined on an edge exact connected hypergraph $H$ is a semi-metric. However, it is not a metric, as the triangle inequality may fail. For instance, in the hypergraph $H$ shown in Figure~\ref{fig2}, $eed_H(1,6)=3$ and $eed_H(6,13)=1$; but $eed_H(1,13)=5$. 

 \begin{figure}[ht]
	\begin{center}
		\includegraphics[scale=1.15]{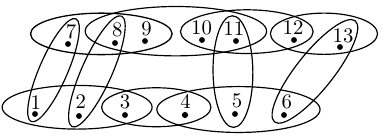}
	\end{center}\caption{The hypergraph $H$}\label{fig2}
\end{figure}


\begin{defn}\normalfont\label{iuc}
	A hypergraph $H$ is said to be \textit{inter-uni-connected} if for any two distinct vertices $u$ and $v$ in $H$, there exists $S, S'\in I(H)$ with $u\in S$ and $v\in S'$ such that $S$ and $S'$ are joined by an internal unified path in $H$. 
\end{defn}
\begin{defn}\normalfont\label{iud}
	Let $H$ be a hypergraph. The \textit{internal unified distance} or simply the \textit{$iu$-distance} between distinct vertices $u$ and $v$ in $H$, denoted by $iud_H(u,v)$, is the length of a shortest internal unified path joining $S, S'\in I(H)$ with $u\in S$ and $v\in S'$, if such an internal unified path exists; $\infty$, otherwise. For a vertex $u$ in $H$, we define $iud_H(u,u)=0.$ 
\end{defn}

\begin{defn}\normalfont\label{iuD}
	The \textit{internal unified diameter} of an inter-uni-connected hypergraph $H$ is the maximum of the $iu$-distance between all the pairs of vertices in $H$, and we denote it by $IUD(H)$,   
	i.e., $IUD(H)=\max\{iud_H(u,v)~|~u,v\in V(H)\}$.
\end{defn}
The hypergraph $H$ with $V(H)=\{1,2,3,4,5,6,7,8,9,10,11,12,13,14\}$ and $E(H)=\{\{1,7\},$ $\{1,8\},\{2,8\},\{3,4\},\{5,11\},\{5,12\},\{5,13\},\{5,14\},\{6,13\},\{6,14\},\{9,12\},\{12,13\},\{13,14\},$ 

\noindent$\{1,2,3\},\{4,5,6\},\{7,8,9\}, \{8,9,10,11\},\{10,11,12\}\}$ is inter-uni-connected. 
It is clear that the internal unified distance defined on an inter-uni-connected hypergraph $H$ is a semi-metric. However, it is not a metric, as the triangle inequality may fail.
For instance, in the inter-uni-connected hypergraph $H$ mentioned above, we have 
$iud_H(1,6)=3$ and $iud_H(6,14)=2$; but $iud_H(1,14)=6$. 


\begin{defn}\normalfont\label{uc}
	A hypergraph $H$ is said to be \textit{uni-connected} if for any two distinct vertices $u$ and $v$ in $H$, there exists $S, S'\in I(H)$ with $u\in S$ and $v\in S'$ such that $S$ and $S'$ are joined by an unified path in $H$.
\end{defn}
\begin{defn}\normalfont\label{ud}
	Let $H$ be a hypergraph. The \textit{unified distance} or simply the \textit{$u$-distance} between distinct vertices $u$ and $v$ in $H$, denoted by $ud_H(u,v)$, is the length of a shortest unified path joining $S, S'\in I(H)$ with $u\in S$ and $v\in S'$, if such an unified path exists; $\infty$, otherwise. For a vertex $u$ in $H$, we define $ud_H(u,u)=0.$ 
\end{defn}

\begin{defn}\normalfont\label{uD}
	The \textit{unified diameter} of an uni-connected hypergraph $H$ is the maximum of the $u$-distance between all the pairs of vertices in $H$, and we denote it by $UD(H)$,   
	i.e., $UD(H)=\max\{ud_H(u,v)~|~u,v\in V(H)\}$.
\end{defn}
The hypergraph $H$ with $V(H)=\{1,2,3,4,5,6,7,8,9,10\}$ and $E(H)=\{\{1,2\},\{1,3\},\{1,7\},$ $\{2,3\},\{2,7\},\{3,4\},\{4,9\},\{5,6\}, \{5,9\},\{5,10\},\{6,10\},\{7,8\},\{8,9\},\{1,2,3\},\{2,3,4\},\{3,4,5\},$ 

\noindent$\{3,5,6\}$ is uni-connected. It is clear that the unified distance defined on a uni-connected hypergraph $H$ is a semi-metric. However it is not a metric, as the triangle inequality may fail.
For instance, in the uni-connected hypergraph $H$ mentioned above, $ud_H(1,4)=2$ and $ud_H(4,6)=2$; but $ud_H(1,6)=5$. 


\begin{defn}\normalfont\label{sec}
	A hypergraph $H$ is said to be \textit{strong exactly connected} if for any two distinct vertices $u$ and $v$ in $H$, there exists an exact path joining $u$ and $v$ in $H$.
\end{defn}
\begin{defn}\normalfont\label{sed}
	Let $H$ be a hypergraph. The \textit{strong exact distance} or simply the \textit{$se$-distance} between distinct vertices $u$ and $v$ in $H$, denoted by $sed_H(u,v)$, is the length of a shortest exact path joining $u$ and $v$, if such an exact path exists; $\infty$, otherwise. For a vertex $u$ in $H$, we define $sed_H(u,u)=0.$ 
\end{defn}

\begin{defn}\normalfont\label{seD}
	The \textit{strong exact diameter} of an strong exactly connected hypergraph $H$ is the maximum of the $se$-distance between all the pairs of vertices in $H$, and we denote it by $SED(H)$,   
	i.e., $SED(H)=\max\{sed_H(u,v)~|~u,v\in V(H)\}$.
\end{defn}
A strong exactly connected hypergraph is shown in Figure~\ref{fig5}.
The strong exact distance defined on a strong exactly connected hypergraph is a metric: For triangle inequality, let $u,v\in V(H)$. Suppose there exist $w\in V(H)$ such that $sed_H(u,v)> sed_H(u,w)+sed_H(w,v)$. This assures that there exists an exact path joining $u$ to $v$ via $w$ of length less than $sed_H(u,v)$, which is not possible. The remaining two conditions of a metric are straight forward.

\begin{defn}\normalfont\label{seec}
	A hypergraph $H$ is said to be \textit{strong edge exact connected} if for any two distinct vertices $u$ and $v$ in $H$, there exists an edge exact path joining $u$ and $v$ in $H$.
\end{defn}
\begin{defn}\normalfont\label{seed}
	Let $H$ be a hypergraph. The \textit{strong edge exact distance} or simply the \textit{$see$-distance} between distinct vertices $u$ and $v$ in $H$, denoted by $seed_H(u,v)$, is the length of a shortest edge exact path joining $u$ and $v$, if such an edge exact path exists; $\infty$, otherwise. For a vertex $u$ in $H$, we define $seed_H(u,u)=0.$ 
\end{defn}

\begin{defn}\normalfont\label{seeD}
	The \textit{strong edge exact diameter} of an strong edge exact connected hypergraph $H$ is the maximum of the $see$-distance between all the pairs of vertices in $H$, and we denote it by $SEED(H)$,   
	i.e., $SEED(H)=\max\{seed_H(u,v)~|~u,v\in V(H)\}$.
\end{defn}

The hypergraph $H$ with $V(H)=\{1,2,3,4,5,6\}$ and $E(H)=\{\{1,2\}, \{2,3\},\{3,4\},\{4,5\},$ $\{1,2,3\},\{1,2,5\},\{1,3,5\}, \{2,3,4\},\{2,4,6\},\{3,4,6\}\}$ is strong edge exact connected. 
It is clear that the strong edge exact distance defined on a strong edge exact connected hypergraph $H$ is a semi-metric. However, we are neither able to prove the triangle inequality nor to find a strong edge exact connected hypergraph in which the strong edge exact distance defined on it violates the triangle inequality.


\begin{defn}\normalfont\label{suc}
	A hypergraph $H$ is said to be \textit{strong uni-connected} if for any two distinct vertices $u$ and $v$ in $H$, there exists an unified path joining $u$ and $v$ in $H$.
\end{defn}
\begin{defn}\normalfont\label{sud}
	Let $H$ be a hypergraph. The \textit{strong unified distance} or simply the \textit{$su$-distance} between distinct vertices $u$ and $v$ in $H$, denoted by $sud_H(u,v)$, is the length of a shortest unified path joining $u$ and $v$, if such an unified path exists; $\infty$, otherwise. For a vertex $u$ in $H$, we define $sud_H(u,u)=0.$ 
\end{defn}

\begin{defn}\normalfont\label{suD}
	The \textit{strong unified diameter} of a strong uni-connected hypergraph $H$ is the maximum of the $su$-distance between all the pairs of vertices in $H$, and we denote it by $SUD(H)$,   
	i.e., $SUD(H)=\max\{sud_H(u,v)~|~u,v\in V(H)\}$.
\end{defn}

The hypergraph $H$ with $V(H)=\{1,2,3,4,5,6,7,8,9,10\}$ and $E(H)=\{\{1,2\},\{1,3\},\{1,7\},$ $\{2,3\},\{2,7\},\{3,4\},\{4,9\},\{5,6\}, \{5,9\},\{5,10\},\{6,10\},\{7,8\},\{8,9\},\{1,2,3\},\{2,3,4\},\{3,4,5\},$ 

\noindent$\{3,5,6\}$ is strong uni-connected. It is clear that the strong unified distance defined on a strong uni-connected hypergraph $H$ is a semi-metric. However, it is not a metric, as the triangle inequality may fail.
For instance, in the strong uni-connected hypergraph $H$ mentioned above,  $sud_H(1,4)=2$ and $sud_H(4,6)=2$; but $sud_H(1,6)=5$.


\begin{defn}\normalfont\label{dpc}
	A hypergraph $H$ is said to be \textit{deeply connected} if every pair of distinct elements of $I(H)$ are joined by an exact path.
\end{defn}

\begin{defn}\normalfont\label{esd}
	Let $H$ be a hypergraph. The \textit{exact set distance} between the distinct elements $S$ and $S'$ of $I(H)$, denoted by $esd_H(S,S')$, is the length of a shortest exact path joining them in $H$ if such an exact path exist; $\infty$,  otherwise. For $S\in I(H)$, we define $esd_H(S,S)=0.$ 
\end{defn}
\begin{defn}\normalfont\label{esD}
	The \textit{exact set diameter} of a deeply connected hypergraph $H$ is the maximum of the exact set distance between all the pairs of elements in $I(H)$, and we denote it by $ESD(H)$, i.e., $ESD(H)=\max\{esd_H(S,S')~|~S,S'\in I(H)\}.$
\end{defn}

A deeply connected hypergraph is shown in Figure~\ref{fig5}. The exact set distance between $S$ and $S'$ in $I(H)$ is the same as the distance between the vertices $S$ and $S'$ in the associated graph $G_H$. Since the distance between vertices defined in a connected graph is a metric, it follows that the exact set distance defined on a deeply connected hypergraph $H$ is a metric.

It is clear that a hypergraph $H$ is deeply connected if and only if its associated graph $G_H$ is connected.

\begin{defn}\normalfont\label{dpeec}
	A hypergraph $H$ is said to be \textit{deeply edge exact connected} if every pair of distinct elements of $I(H)$ are joined by an edge exact path.
\end{defn}
\begin{defn}\normalfont\label{eesd}
	Let $H$ be a hypergraph. The \textit{edge exact set distance} between the distinct elements $S$ and $S'$ of $I(H)$, denoted by $eesd_H(S,S')$, is the length of a shortest edge exact path joining them in $H$ if such an edge exact path exist; $\infty$,  otherwise. For $S\in I(H)$, we define $eesd_H(S,S)=0.$ 
\end{defn}
\begin{defn}\normalfont\label{eesD}
	The \textit{edge exact set diameter} of a deeply edge exact connected hypergraph $H$ is the maximum of the edge exact set distance between all the pairs of elements in $I(H)$, and we denote it by $EESD(H)$, i.e., $EESD(H)=\max\{eesd_H(S,S')~|~S,S'\in I(H)\}.$
\end{defn}

The hypergraph $H$ with $V(H)=\{1,2,3,4\}$ and $E(H)=\{\{1,2\}, \{1,3\},\{1,4\},\{1,2,3\}, \{1,2,4\},$ $\{2,3,4\}\}$ is deeply edge exact connected. It is clear that the edge exact set distance defined on a deeply edge exact connected hypergraph $H$ is a semi-metric. However, it is not a metric, since the triangle inequality may fail.
For instance, in the deeply edge exact connected hypergraph $H$ mentioned above, $eesd_H(\{1,4\},\{2\})=1$ and $eesd_H(\{2\},\{2,4\})=2$; but $eesd_H(\{1,4\},\{2,4\})=4$. 

\begin{defn}\normalfont\label{dpiuc}
	A hypergraph $H$ is said to be \textit{deeply inter-uni-connected} if every pair of distinct elements of $I(H)$ are joined by an internal unified path.
\end{defn}
\begin{defn}\normalfont\label{iusd}
	Let $H$ be a hypergraph. The \textit{internal unified set distance} between the distinct elements $S$ and $S'$ of $I(H)$, denoted by $iusd_H(S,S')$, is the length of a shortest internal unified path joining them in $H$ if such an internal unified path exist; $\infty$,  otherwise. For $S\in I(H)$, we define $iusd_H(S,S)=0.$ 
\end{defn}
\begin{defn}\normalfont\label{iusD}
	The \textit{internal unified set diameter} of a deeply inter-uni connected hypergraph $H$ is the maximum of the internal unified set distance between all the pairs of elements in $I(H)$, and we denote it by $IUSD(H)$, i.e., $IUSD(H)=\max\{iusd_H(S,S')~|~S,S'\in I(H)\}.$
\end{defn}

The hypergraph $H$ with $V(H)=\{1,2,3,4,5\}$ and $E(H)=\{\{1,2\}, \{1,3\},\{1,4\},\{1,5\},\{2,3\},$ $\{2,4\},\{2,5\},\{3,4\},\{3,5\},\{4,5\},\{1,2,3\},\{1,2,4\},\{1,2,5\},\{1,3,4\},\{1,3,5\},\{1,4,5\},\{2,3,4\},$

\noindent $\{2,3,5\},\{2,4,5\},\{3,4,5\}\}\}$ is deeply inter-uni connected. It is clear that the internal unified set distance defined on a deeply inter-uni connected hypergraph $H$ is a semi-metric.  However, we are neither able to prove the triangle inequality nor to find a deeply inter-uni connected hypergraph in which the internal unified set distance defined on it violates the triangle inequality.


The relationships among the different types of connectedness of a hypergraph defined above is given in Figure~\ref{diag}.
\begin{figure}[ht]
	\begin{center}
			\includegraphics[scale=0.9]{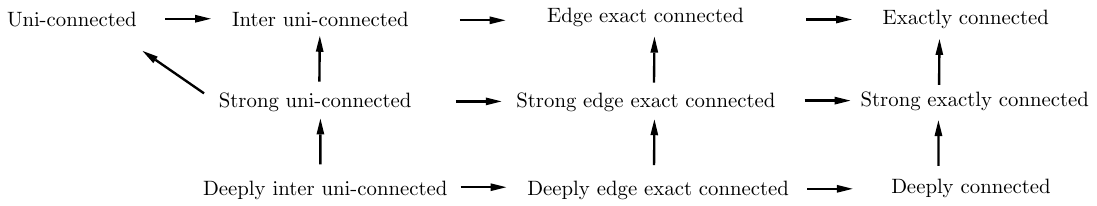}
		\end{center}\caption{The relationships among different types of connectedness of a hypergraph}\label{diag}
\end{figure}

In a graph $G$, Definitions~\ref{eec}, \ref{iuc}, \ref{uc}, \ref{sec}, \ref{seec}, \ref{suc}, \ref{dpc}, \ref{dpeec} and \ref{dpiuc} coincide with the definition of connectedness; Definitions~\ref{eed}, \ref{iud}, \ref{ud}, \ref{sed}, \ref{seed}, \ref{sud}, \ref{esd}, \ref{eesd}, \ref{iusd} coincide with the definition of distance between two vertices; and Definitions~\ref{eeD}, \ref{iuD}, \ref{uD}, \ref{seD}, \ref{seeD}, \ref{suD}, \ref{esD}, \ref{eesD}, \ref{iusD} coincide with the definition of diameter.

\begin{thm}\normalfont \label{chain1}
	Let $H$ be a hypergraph. Then we have the following.
	\begin{itemize}
		\item[(i)] If $H$ is uni-connected, then $UD(H)\geq IUD(H)\geq EED(H)\geq ED(H)$;
		\item[(ii)] If $H$ is strong uni-connected, then $SUD(H)\geq SEED(H)\geq SED(H)$;
		\item[(iii)]If $H$ is deeply inter-uni-connected, then $IUSD(H)\geq EESD(H)\geq ESD(H)$.
	\end{itemize}
\end{thm}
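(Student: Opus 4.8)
The plan is to reduce all three chains to a single family of set-inclusions among the four classes of paths that underlie these distances, and then to exploit the elementary fact that minimizing length over a larger family of admissible paths can only decrease the optimum. Recall the hierarchy of path types used in the definitions: a \emph{unified path} (all parts pairwise disjoint), an \emph{internal unified path} (all pairs of parts disjoint except the initial--terminal pair), an \emph{edge exact path} (an exact path with all edges distinct), and an \emph{exact path} (all parts distinct). I claim the inclusions
\[
\{\text{unified}\}\subseteq\{\text{internal unified}\}\subseteq\{\text{edge exact}\}\subseteq\{\text{exact}\}
\]
hold as collections of walks sharing the same initial and terminal parts. Granting these, for every ordered pair of endpoints the shortest length taken over a smaller class is at least the shortest length over a larger class, so the corresponding distance functions obey pointwise inequalities; taking the maximum over the relevant pairs then yields each diameter chain in reverse reading.

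First I would verify the three inclusions. The first and the last are immediate from the definitions: a unified path has all parts pairwise disjoint, hence in particular all pairs other than the initial--terminal pair are disjoint and its length is at least two, so it is an internal unified path; and an edge exact path is by definition an exact path. The substantive step is \emph{internal unified} $\subseteq$ \emph{edge exact}, i.e.\ that an internal unified path $S_0,e_1,S_1,\dots,e_n,S_n$ has pairwise distinct edges. For consecutive edges, $\{S_{i-1},S_i\}$ and $\{S_i,S_{i+1}\}$ are $2$-partitions of a common set only if $S_{i-1}=S_{i+1}$, which is impossible since all parts of an exact path are distinct. For non-consecutive edges $e_i,e_j$ with $j\geq i+2$, the part $S_i$ is an internal part (as $1\leq i\leq n-1$), and the only pair exempted from disjointness is the initial--terminal pair $(S_0,S_n)$; since $S_i$ is neither $S_0$ nor $S_n$, we get $S_i\cap S_{j-1}=S_i\cap S_j=\emptyset$, hence $S_i\cap e_j=\emptyset$. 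As $\emptyset\neq S_i\subseteq e_i$, we conclude $e_i\neq e_j$, so all edges are distinct and the path is edge exact. This distinct-edges argument is the main obstacle; everything else is bookkeeping.

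With the inclusions in hand I would finish each part. For (i), uni-connectedness guarantees that for every pair of distinct vertices $u,v$ a unified path between suitable $S\ni u$, $S'\ni v$ exists, whence (since unified $\subseteq$ internal unified $\subseteq$ edge exact $\subseteq$ exact) all four weak distances are finite; the inclusions give $ed_H(u,v)\leq eed_H(u,v)\leq iud_H(u,v)\leq ud_H(u,v)$ pointwise, and maximizing over all pairs $u,v$ yields $ED(H)\leq EED(H)\leq IUD(H)\leq UD(H)$. For (ii) the endpoints of the strong distances are the singletons $\{u\}$ and $\{v\}$, which are automatically disjoint, so a strong internal unified path is already a unified path and no separate ``strong internal unified'' term appears; the relevant inclusions reduce to unified $\subseteq$ edge exact $\subseteq$ exact, giving $sed_H(u,v)\leq seed_H(u,v)\leq sud_H(u,v)$ and hence $SED(H)\leq SEED(H)\leq SUD(H)$. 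For (iii) the set distances join prescribed elements $S,S'\in I(H)$ that need not be disjoint, so the unified class is unavailable and the chain begins at the internal unified class; the inclusions internal unified $\subseteq$ edge exact $\subseteq$ exact give $esd_H(S,S')\leq eesd_H(S,S')\leq iusd_H(S,S')$, and deep inter-uni-connectedness ensures finiteness, so maximizing over all pairs $S,S'\in I(H)$ yields $ESD(H)\leq EESD(H)\leq IUSD(H)$. In each case the stated chain is just the reverse reading of these inequalities.
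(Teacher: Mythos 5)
Your proof is correct and takes essentially the same route as the paper's: the inclusions among the path classes (unified $\subseteq$ internal unified $\subseteq$ edge exact $\subseteq$ exact) give pointwise inequalities between the corresponding distances, and taking maxima over the relevant pairs yields each diameter chain, with the connectedness hypothesis guaranteeing all quantities are defined. You actually supply more detail than the paper, which proves only the first inequality of part (i) and defers the rest to the implication diagram of Figure~\ref{diag}; in particular your distinct-edges argument showing every internal unified path is an edge exact path makes explicit the one genuinely nontrivial inclusion that the paper leaves unproved.
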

\begin{proof} 
	 We prove the first inequality of part~(i). The remaining two inequalities of part~(i) can be established similarly. 
		
		If $H$ is uni-connected, then it is also inter-uni connected, edge exact connected, and exactly connected.
		 Additionally, as an unified path is an inter-unified path, we have $ud_H(u,v)\geq iud_H(u,v)$ for all $u, v\in V(H)$. It follows that $UD(H)\geq IUD(H)$. 
		 
	 The remaining two parts can be proved in a similar way as part~(i) using the relationship shown in Figure~\ref{diag}. 		  		
\end{proof}
\begin{thm}\normalfont\label{chain2}
	Let $H$ be a hypergraph. Then we have the following.
	\begin{itemize}
		\item[(i)] If $H$ is deeply inter-uni-connected, then $IUSD(H)\geq SUD(H)\geq UD(H)$;
		\item[(ii)] If $H$ is deeply edge exact connected, then $EESD(H)\geq SEED(H)\geq EED(H)$;
		\item[(iii)]If $H$ is deeply connected, then $ESD(H)\geq SED(H)\geq ED(H)$.
	\end{itemize}
\end{thm}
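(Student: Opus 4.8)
The plan is to derive every inequality from two elementary monotonicity principles, treating the three parts in parallel and isolating the one genuinely delicate point. First I would record that the hypotheses guarantee finiteness: by the implications summarized in Figure~\ref{diag}, a deeply connected (respectively deeply edge exact connected, deeply inter-uni-connected) hypergraph is also strong exactly connected and exactly connected (respectively the edge-exact and uni analogues). Hence every distance occurring in the corresponding chain is finite, and all the inequalities are between finite numbers.

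For the \emph{second} inequality in each chain, namely $SED(H)\ge ED(H)$, $SEED(H)\ge EED(H)$ and $SUD(H)\ge UD(H)$, I would argue uniformly. Fix a path type (exact, edge exact, or unified). The plain distance between $u$ and $v$ is obtained by minimizing the length of a path of that type over all $S,S'\in I(H)$ with $u\in S$ and $v\in S'$, whereas the strong distance forces $S=\{u\}$ and $S'=\{v\}$. Since $(\{u\},\{v\})$ is one admissible choice in the plain minimization, the plain distance is at most the strong distance for every pair $u,v$; taking maxima over $V(H)\times V(H)$ yields the claimed diameter inequality. For the \emph{first} inequality of parts~(ii) and (iii), namely $EESD(H)\ge SEED(H)$ and $ESD(H)\ge SED(H)$, the path type used by the set distance and by the strong distance coincide, so directly from the definitions $esd_H(\{u\},\{v\})=sed_H(u,v)$ and $eesd_H(\{u\},\{v\})=seed_H(u,v)$ (both sides are the shortest path of the same type joining the singletons $\{u\},\{v\}$). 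As the singleton pairs form a subfamily of all pairs in $I(H)$ over which the set diameter is maximized, the set diameter dominates the strong diameter.

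The main obstacle is the first inequality of part~(i), $IUSD(H)\ge SUD(H)$, where the set distance is measured with internal unified paths but the strong distance with unified paths, so the two path types do not match. Here I would use the observation that reconciles the mismatch: for distinct vertices $u,v$, any internal unified path joining $\{u\}$ and $\{v\}$ is in fact a unified path. Indeed, in an internal unified path the only pair of parts permitted to meet is the initial--terminal pair, which in this case is $\{\{u\},\{v\}\}$; since $u\neq v$ these two parts are disjoint, so all pairs of parts are pairwise disjoint and the path is unified. Consequently a shortest internal unified path joining $\{u\},\{v\}$ is itself a unified path, which gives $iusd_H(\{u\},\{v\})\ge sud_H(u,v)$. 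Combining this with $IUSD(H)\ge iusd_H(\{u\},\{v\})$ yields $IUSD(H)\ge sud_H(u,v)$ for every pair $u,v$, and maximizing over $V(H)$ gives $IUSD(H)\ge SUD(H)$. The remaining inequality $SUD(H)\ge UD(H)$ is then just the unified-path instance of the uniform ``strong $\ge$ plain'' argument already established. I expect the only step requiring care to be the distinct-singleton observation above; the rest are routine restriction and monotonicity arguments.
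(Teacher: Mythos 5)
Your proof is correct and follows essentially the same route as the paper's: deduce the weaker connectedness properties from the hypothesis, obtain the set-diameter $\geq$ strong-diameter inequalities by restricting to singleton pairs $\{u\},\{v\}\in I(H)$, and obtain the strong $\geq$ plain inequalities from the pointwise bound $sud_H(u,v)\geq ud_H(u,v)$ (and its analogues). Your explicit observation that an internal unified path joining $\{u\}$ and $\{v\}$ with $u\neq v$ is automatically a unified path is exactly what justifies the set inclusion $\{sud_H(u,v)\mid u,v\in V(H)\}\subseteq\{iusd_H(S,S')\mid S,S'\in I(H)\}$ that the paper asserts without proof, so you have filled in the one step the paper leaves implicit.
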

\begin{proof} 
 If $H$ is deeply inter-uni-connected, then it is strong uni-connected and  uni-connected. 
	Notice that $\{sud_H(u,v)\mid u,v\in V(H)\}\subseteq \{iusd_H(S,S')\mid S,S'\in I(H)\}$. It follows that $IUSD(H)\geq SUD(H)$. 
	Also, we have $sud_H(u,v)\geq ud_H(u,v)$ for all $u, v\in V(H)$. It follows that $SUD(H)\geq UD(H)$. Thus proof of part~(i) is completed.
	
The remaining two parts can be proved in a similar way as part~(i) using the relationship shown in Figure~\ref{diag}. 	
\end{proof}

\begin{thm}\label{thm 5.4}
	Let $H$ be a simple hypergraph with $e$-index $k$.
	\begin{itemize}
		\item [(i)]	 If $S\nsim S'$ in $I(H)$, then $\nu_{k-1}(H)\leq\frac{1}{2}(d^*_H(S)+d^*_H(S'))$. In particular, if $H\neq K_k$, then $\nu_{k-1}(H)\leq k-2$.
		\item[(ii)] If $H$ is deeply connected with exact diameter $d$, then \[\nu_{k-1}(H)\leq \hat{d^*}(H)-2\sqrt{\hat{d^*}(H)-1}+\frac{2}{\lfloor \frac{d}{2}\rfloor}\left(\sqrt{\hat{d^*}(H)-1}-1\right).\]
	\end{itemize}
\end{thm}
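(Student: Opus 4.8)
The plan is to push everything through the identity $\mathbf{U}^{\mathbf{L}}(H)=L(G_H)$ recorded earlier, so that $\nu_1(H)\ge\cdots\ge\nu_k(H)$ are exactly the Laplacian eigenvalues of the (simple) associated graph $G_H$ and $\nu_{k-1}(H)$ is its second smallest Laplacian eigenvalue, i.e.\ the algebraic $d$-connectivity. Under this dictionary $d^*_H(S)$ is the degree of the vertex $S$ in $G_H$, $\hat{d^*}(H)$ is the maximum degree of $G_H$, ``$S\sim S'$'' means adjacency in $G_H$, and (by the remark after Definition~\ref{esD}) the exact set distance equals the graph distance, so the diameter $d$ in part~(ii) is the diameter of $G_H$; if instead $d$ is read as the exact diameter $ED(H)\le ESD(H)$, the stated inequality still follows, since its right-hand side is non-increasing in $d$. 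I would then invoke the variational characterization: because $L(G_H)$ is positive semidefinite, $\nu_{k-1}(H)\le x^{T}L(G_H)x/x^{T}x$ for every $x$ orthogonal to the all-ones vector $\mathbf{1}$. Both parts now reduce to choosing good test vectors.

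For part~(i), fix $S\nsim S'$ and take $f$ with $f_S=1$, $f_{S'}=-1$ and $f_T=0$ otherwise, so that $\mathbf{1}^{T}f=0$. Using $f^{T}L(G_H)f=\sum_{\{T,T'\}\in E(G_H)}(f_T-f_{T'})^2$ together with $S\nsim S'$, the only contributing edges are those meeting exactly one of $S,S'$, so $f^{T}L(G_H)f=d^*_H(S)+d^*_H(S')$ while $f^{T}f=2$; hence $\nu_{k-1}(H)\le\tfrac12(d^*_H(S)+d^*_H(S'))$. For the ``in particular'' clause, $H\ne K_k$ forces $G_H\ne K_k$ by Lemma~\ref{Knarg}(iii); since $G_H$ is a simple graph on $k$ vertices it then admits a non-adjacent pair $S,S'$, each having at most $k-2$ neighbours (all neighbours avoid both $S$ and $S'$), so the bound specializes to $\nu_{k-1}(H)\le k-2$.

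Part~(ii) is an Alon--Boppana/Nilli-type estimate, which I would prove with a geometrically decaying test vector on $G_H$. Set $\Delta:=\hat{d^*}(H)$ (the bound is degenerate for $\Delta=2$, so assume $\Delta\ge3$), put $\beta:=(\Delta-1)^{-1/2}$, pick $u,v\in I(H)$ with $d_{G_H}(u,v)=d$, and let $r:=\lfloor d/2\rfloor$. Define $g$ on the ball of radius $r-1$ about $u$ by $g_T=\beta^{d_{G_H}(u,T)}$, and $h$ analogously about $v$. Since $2r\le d$, these two balls are disjoint and have no edge between them, so the cross term vanishes and there is a nonzero $f\in\mathrm{span}(g,h)$ with $\mathbf{1}^{T}f=0$ and $R(f)\le\max\{R(g),R(h)\}$, where $R(x)=x^{T}L(G_H)x/x^{T}x$. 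To bound $R(g)$ I would expand $g^{T}L(G_H)g$ over the distance-shells $U_i=\{T:d_{G_H}(u,T)=i\}$: edges inside a shell contribute $0$, radial edges between $U_{i-1}$ and $U_i$ (at most $(\Delta-1)|U_{i-1}|$ of them for $i\ge2$) contribute $(\beta^{i-1}-\beta^i)^2$ each, plus a boundary contribution from $U_{r-1}$ to $U_r$. Everything is organised around the identity $(\Delta-1)(1-\beta)^2=\Delta-2\sqrt{\Delta-1}=(\sqrt{\Delta-1}-1)^2$, which produces the leading term $\Delta-2\sqrt{\Delta-1}$; because $u$ is an endpoint of a diametral path the shells $U_0,\dots,U_{r-1}$ are all nonempty and the weights $|U_i|\beta^{2i}$ are non-increasing for $i\ge1$, so $g^{T}g$ exceeds roughly $r$ copies of the smallest weight, which converts the residual center/boundary terms into a factor of order $1/\lfloor d/2\rfloor$, giving $R(g)\le(\sqrt{\Delta-1}-1)^2+\tfrac{2}{\lfloor d/2\rfloor}(\sqrt{\Delta-1}-1)$; the same holds for $R(h)$.

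The hard part will be the precise bookkeeping of the two anomalous shells in~(ii): the central shell, where $u$ may have degree $\Delta$ rather than $\Delta-1$, and the outer shell $U_{r-1}\to U_r$, where mass leaves the support. Controlling their combined contribution so that it is \emph{exactly} $\tfrac{2}{\lfloor d/2\rfloor}(\sqrt{\Delta-1}-1)$, rather than a constant of order $(1-\beta)^2$, is the delicate point; it is precisely here that the separation of the two balls (which pins down the admissible radius $r-1$, hence the denominator $\lfloor d/2\rfloor$) and the monotonicity of the shell weights $|U_i|\beta^{2i}$ must be combined to lower-bound $g^{T}g$ by enough shells. Part~(i), by contrast, is immediate once the antisymmetric test vector is written down.
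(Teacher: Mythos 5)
Your part (i) is correct and complete: the antisymmetric test vector ($f_S=1$, $f_{S'}=-1$, zero elsewhere) combined with the variational characterization of the second smallest Laplacian eigenvalue is precisely the standard proof of the graph inequality that the paper simply imports from \cite[Theorem~7.4.4]{cvetko}, and your use of Lemma~\ref{Knarg}(iii) for the ``in particular'' clause matches the paper. Your transfer steps for part (ii) are also sound, and on one point more explicit than the paper: you observe that the right-hand side is non-increasing in $d$, so that it suffices to prove the bound with $d$ replaced by the (possibly larger) diameter of $G_H$, which is what legitimizes stating the result with the exact diameter $ED(H)$.

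The genuine gap is the core estimate in part (ii). The paper disposes of it by citing \cite[Theorem~7.4.9]{cvetko}; you instead undertake to reprove this Alon--Boppana-type theorem, and the decisive step --- which you yourself flag as ``the delicate point'' --- is not only missing but cannot be supplied by the tools your sketch allows. Writing $w_i=|U_i|\beta^{2i}$ and carrying out your shell decomposition exactly as described gives
\[
g^{T}L(G_H)g\;\le\;\bigl(\sqrt{\Delta-1}-1\bigr)^2\,g^{T}g\;+\;(1-\beta)^2\;+\;\bigl(2\sqrt{\Delta-1}-1\bigr)\,w_{r-1},
\]
so the excess of $R(g)$ over the leading term $\Delta-2\sqrt{\Delta-1}$ is at most $\bigl[(1-\beta)^2+(2\sqrt{\Delta-1}-1)w_{r-1}\bigr]\big/\sum_{i=0}^{r-1}w_i$. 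Monotonicity of the $w_i$ together with ``at least $r$ shells'' does not make this $O\bigl(1/\lfloor d/2\rfloor\bigr)$. If the shells thin out --- e.g.\ $|U_i|=1$ for all $i\le r-1$, which is perfectly consistent with $\Delta\ge 3$ and $u$ lying on a diametral path (a broom-shaped $G_H$) --- then $\sum_i w_i\le(1-\beta^2)^{-1}$ is bounded by a constant independent of $r$, and the central-shell term $(1-\beta)^2$ is not damped at all; your claim that $g^Tg$ ``exceeds roughly $r$ copies of the smallest weight'' is true but useless here, since the smallest weight is exponentially small. In the opposite regime where all $w_i$ are comparable to $1$, the denominator is indeed of order $r$, but the numerator is $(1-\beta)^2+2\sqrt{\Delta-1}-1$, which strictly exceeds the target $2\sqrt{\Delta-1}-2$; so the constant overshoots even in the most favorable case. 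Thus the specific bound claimed, with error exactly $\frac{2}{\lfloor d/2\rfloor}\bigl(\sqrt{\Delta-1}-1\bigr)$, does not follow from your decomposition in either regime; closing it requires a genuinely finer argument (a different test function, or a case analysis on the shell profile), i.e.\ a real reproof of \cite[Theorem~7.4.9]{cvetko}. As it stands, part (ii) of your proposal is a plausible strategy rather than a proof, and the citation in the paper is doing exactly the work your sketch leaves undone.
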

\begin{proof}
	\begin{itemize}
		\item[(i)]
		If $S\nsim S'$ in $I(H)$, then $S$ and $S'$ are not adjacent in $G_H$. 
		Since the spectrum of $\mathbf{U}^{\mathbf{L}}(H)$ and the spectrum of $L(G_H)$ are the same, the proof follows from part~$(iii)$ of Lemma~\ref{Knarg} and from~\cite[Theorem 7.4.4]{cvetko}: ``If $u$ and $v$ are two non-adjacent vertices of a simple graph $G$ on $n$ vertices, then
		$\nu_{n-1}(G)\leq\frac{1}{2}(d_G(u) + d_G(v) )$. In particular, if $G$ is not complete, then $\nu_{n-1}(G)\leq n-2$".
		\item[(ii)] Since $H$ is deeply connected, $G_H$ is connected.
		Notice that the exact distance between two vertices $u,v$ in $H$ is the same as the distance between the vertices $S_i, S_j$ in $G_H$, where $S_i,S_j\in I(H)$ with $u\in S_i, v\in S_j$. Also, by the definition of exact distance and exact set distance of a hypergraph, we have \begin{align*}
		\{ed_H(u,v)~|~u,v\in V(H)\}&\subseteq\{esd_H(S,S')~|~S,S'\in I(H)\}\\
		&=\{d_{G_H}(S,S')~|~S,S'\in V(G_H)\}.
		\end{align*} Taking maximum on both sides of the above inclusion, we have that $ED(H)$ is less than or equal to the diameter of $G_H$. It is not hard to observe that $\Delta(G_H)=\hat{d^*}(H)$. Since the spectrum of $\mathbf{U}^{\mathbf{L}}(H)$ and the spectrum of $L(G_H)$ are the same, the proof directly follows from~\cite[Theorem~7.4.9]{cvetko}.
	\end{itemize}
\end{proof}
The following theorems provide some spectral bounds on the different types of diameters of a hypergraph defined above.
\begin{thm}
	Let $H$ be a deeply inter uni-connected, simple hypergraph with $e$-index $k$. Then
	$$IUSD(H)\geq EESD(H)\geq ESD(H)\geq \displaystyle \left\lceil \frac{4}{k\cdot\nu_{{k}-1}(H)}\right\rceil.$$	
\end{thm}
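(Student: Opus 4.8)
The plan is to reduce the final inequality entirely to its rightmost part, since the chain $IUSD(H)\geq EESD(H)\geq ESD(H)$ is already supplied by Theorems~\ref{chain1} and~\ref{chain2} (applied to a deeply inter-uni-connected hypergraph, which is in particular deeply edge exact connected and deeply connected). Thus the only new content to establish is $ESD(H)\geq \left\lceil \frac{4}{k\cdot\nu_{k-1}(H)}\right\rceil$. The natural route is to translate everything into the associated graph $G_H$: by the remark following Definition~\ref{esD}, $ESD(H)$ equals the diameter of $G_H$, and since $H$ is simple the $\mathbf{U}^{\mathbf{L}}$-spectrum of $H$ coincides with the Laplacian spectrum of $G_H$, so $\nu_{k-1}(H)$ is precisely the algebraic connectivity (second-smallest Laplacian eigenvalue) of the connected graph $G_H$ on $k$ vertices.

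First I would invoke the standard spectral lower bound on the diameter of a connected graph in terms of its algebraic connectivity. The classical result (due to Mohar, and quotable from the same reference \cite{cvetko} used throughout this section) states that for a connected graph $G$ on $n$ vertices with algebraic connectivity $a(G)=\nu_{n-1}(G)>0$, the diameter satisfies $\mathrm{diam}(G)\geq \frac{4}{n\cdot a(G)}$. Applying this to $G=G_H$ with $n=k$ gives $\mathrm{diam}(G_H)\geq \frac{4}{k\cdot\nu_{k-1}(H)}$. Since the diameter is an integer and equals $ESD(H)$, I may take the ceiling on the right-hand side, yielding $ESD(H)\geq \left\lceil \frac{4}{k\cdot\nu_{k-1}(H)}\right\rceil$, which is exactly the claimed bound.

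The main obstacle is verifying that the hypotheses of the graph-theoretic diameter bound are genuinely met, namely that $G_H$ is connected and that $\nu_{k-1}(H)>0$. Connectedness of $G_H$ follows because $H$ being deeply inter-uni-connected forces $H$ to be deeply connected (via the relationships in Figure~\ref{diag}), and a hypergraph is deeply connected if and only if $G_H$ is connected, as noted after Definition~\ref{esD}. Positivity of $\nu_{k-1}(H)$ is then immediate, since the algebraic connectivity of a connected graph is strictly positive, so the reciprocal in the bound is well defined. I would therefore structure the proof as: (1) cite Theorems~\ref{chain1}(iii) and~\ref{chain2}(iii) for the first two inequalities; (2) identify $ESD(H)$ with $\mathrm{diam}(G_H)$ and $\nu_{k-1}(H)$ with the algebraic connectivity of $G_H$; (3) confirm $G_H$ is connected on $k$ vertices; and (4) apply the Mohar-type diameter bound and take ceilings.
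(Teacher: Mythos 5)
Your proposal is correct and follows essentially the same route as the paper: both reduce the chain $IUSD(H)\geq EESD(H)\geq ESD(H)$ to Theorem~\ref{chain1}(iii), identify $ESD(H)$ with the diameter of the simple connected graph $G_H$ and $\nu_{k-1}(H)$ with its algebraic connectivity, and then apply the Mohar-type spectral lower bound on the diameter (the paper cites it as Theorem~7.5.5 of \cite{cvetko}). The only differences are cosmetic: your citation of Theorem~\ref{chain2} is superfluous (Theorem~\ref{chain1}(iii) alone gives the first two inequalities), and you spell out the integrality/ceiling step and the positivity of $\nu_{k-1}(H)$, which the paper leaves implicit.
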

\begin{proof}
	Since $H$ is simple and deeply inter-uni-connected, it is clear that $G_H$ is simple and connected. Also, the exact set diameter of $H$ is the same as the diameter of $G_H$. So, the proof follows from~\cite[Theorem~7.5.5]{cvetko} and part~$(iii)$ of Theorem~\ref{chain1}.
\end{proof}

\begin{thm}
	Let $H$ be a deeply connected, simple hypergraph with $e$-index $k$. Then, we have the following:
	\begin{itemize}
		\item[(i)]   $ED(H)\leq SED(H)\leq ESD(H)\leq r-1$, where $r$ is the number of distinct unified Laplacian eigenvalues of $H$.
		\item[(ii)] If $H\neq K_k$, then $$ED(H)\leq SED(H)\leq ESD(H)\leq 1+\left\lfloor\displaystyle \frac{\log(k-1)}{\log\left(\frac{\nu_1(H)+\nu_{k-1}(H)}{\nu_1(H)-\nu_{{k}-1}(H)}\right)}\right\rfloor$$ and
		 $$ED(H)\leq SED(H)\leq ESD(H)\leq 1+\left\lfloor\displaystyle \frac{\cosh^{-1}(k-1)}{\cosh^{-1}\left(\frac{\nu_1(H)+\nu_{{k}-1(H)}}{\nu_1(H)-\nu_{{k}-1(H)}}\right)}\right\rfloor;$$
		\item[(iii)] $ED(H)\leq SED(H)\leq ESD(H)\leq 2 \left\lfloor \sqrt{\frac{2\hat{d^*}(H)}{\nu_{k-1}(H)}}\log_2(k)\right\rfloor$.
	\end{itemize}
\end{thm}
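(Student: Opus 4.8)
The plan is to reduce every assertion to a statement about the associated graph $G_H$ and then invoke classical spectral diameter bounds for connected graphs. First I would record that, since $H$ is simple and deeply connected, $G_H$ is a simple connected graph (as noted after Definition~\ref{esD}), and that the two chained inequalities $ED(H)\leq SED(H)\leq ESD(H)$ appearing in all of (i)--(iii) are already delivered by part~(iii) of Theorem~\ref{chain2}. Hence it is enough to bound the single quantity $ESD(H)$ from above in each part; the rest of each chain comes for free.

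Next I would use the identity $ESD(H)=\mathrm{diam}(G_H)$, which holds because the exact set distance between $S$ and $S'$ in $I(H)$ coincides with the graph distance between the vertices $S,S'$ of $G_H$ (the remark following Definition~\ref{esD}). Since $H$ is simple, $\mathbf{U}^{\mathbf{L}}(H)=L(G_H)$, so the unified Laplacian spectrum of $H$ is exactly the Laplacian spectrum of $G_H$. In particular the number $r$ of distinct unified Laplacian eigenvalues of $H$ equals the number of distinct Laplacian eigenvalues of $G_H$; $\nu_1(H)$ and $\nu_{k-1}(H)$ are the largest and the second-smallest Laplacian eigenvalue (the algebraic connectivity) of $G_H$; and $\hat{d^*}(H)=\Delta(G_H)$. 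With these identifications, and noting $|V(G_H)|=|I(H)|=k$, each upper bound becomes a literal instance of a known Laplacian diameter bound on a connected graph on $k$ vertices: (i) is the bound $\mathrm{diam}\leq(\text{number of distinct Laplacian eigenvalues})-1$; (ii) are the ratio bounds in $\log$ and $\cosh^{-1}$ of $(\nu_1(H)+\nu_{k-1}(H))/(\nu_1(H)-\nu_{k-1}(H))$; and (iii) is the $2\lfloor\sqrt{2\Delta/\lambda_2}\,\log_2 k\rfloor$ bound with $\lambda_2=\nu_{k-1}(H)$. I would cite the corresponding theorems from~\cite{cvetko}, exactly in the spirit of the preceding theorem's proof.

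For part~(ii) the hypothesis $H\neq K_k$ is used to guarantee $G_H\neq K_k$, so that the ratio $(\nu_1(H)+\nu_{k-1}(H))/(\nu_1(H)-\nu_{k-1}(H))$ is a legitimate argument (strictly greater than $1$, whence its logarithm and inverse hyperbolic cosine are positive) and the cited graph bound applies; this translation is precisely part~(iii) of Lemma~\ref{Knarg}. The only genuine work is bookkeeping: checking that $r$, $\hat{d^*}(H)$, $\nu_1(H)$, $\nu_{k-1}(H)$ match their graph counterparts exactly as the cited bounds demand, and confirming that each referenced theorem in~\cite{cvetko} is stated for connected simple graphs so that $G_H$ qualifies. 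I expect the main (mild) obstacle to be ensuring the degenerate cases—such as $G_H$ complete or $\nu_1(H)=\nu_{k-1}(H)$—are correctly excluded by the stated hypotheses; no new estimate is required.
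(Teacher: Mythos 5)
Your proposal is correct and follows essentially the same route as the paper: reduce to the associated graph $G_H$ (simple and connected since $H$ is simple and deeply connected), identify $ESD(H)$ with the diameter of $G_H$ and the unified Laplacian spectrum with the Laplacian spectrum of $G_H$, obtain the chain $ED(H)\leq SED(H)\leq ESD(H)$ from part~(iii) of the earlier theorem on set diameters, invoke the classical spectral diameter bounds from the graph-spectra literature, and use the lemma identifying $H\cong K_k$ with $G_H\cong K_k$ to handle the non-complete hypothesis in part~(ii). The paper's proof is a condensed version of exactly this argument, so no further comparison is needed.
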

\begin{proof}
	Since $H$ is simple and deeply connected, it is clear that $G_H$ is simple and connected. It is clear that the exact set diameter of $H$ is the same as the diameter of $G_H$. So, the proof follows from~\cite[Proposition~7.5.6 and Theorems~7.5.7, 7.5.8, 7.5.11]{cvetko}, part~$(iii)$ of Theorem \ref{chain2} and Lemma~\ref{Knarg}.
\end{proof}

\begin{notation}\normalfont
	For a hypergraph $H$, we denote $\tau(H):=\underset{e\in E^*(H)}{\bigcup}\tau(e)$.
\end{notation}

\begin{lemma}\label{pathLem}
	Let $H$ be a loopless hypergraph with $e$-index $k$. Then for any $\{T, T'\}\in \tau(H)$, $T$ and $T'$ can consecutively occur in at most $\frac{k^2}{4}$ of any shortest exact path from $S$ to $S'$, where $S, S'\in I(H)$ and $S\neq S'$.
\end{lemma}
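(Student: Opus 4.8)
The plan is to translate the statement into the associated graph $G_H$ and recognize it as an edge--betweenness bound on vertex pairs. First I would recall (as noted after Definition~\ref{esd}) that for a loopless hypergraph the exact set distance coincides with the graph distance, $esd_H(S,S')=d_{G_H}(S,S')$, so a shortest exact path from $S$ to $S'$ is precisely a geodesic of $G_H$ between the vertices $S,S'\in I(H)=V(G_H)$, and a consecutive occurrence of $T,T'$ along such a path is exactly the traversal of the edge $TT'$ of $G_H$. Since $\{T,T'\}\in\tau(H)$ is a $2$-partition of an edge of $H$, we have $T\overset{c}{\sim}T'$ for some $c\ge 1$, so $T$ and $T'$ are adjacent in $G_H$ and $d_{G_H}(T,T')=1$. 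Thus the quantity to be bounded is the number of (unordered) pairs $\{S,S'\}$ of vertices of $G_H$ whose geodesic can traverse the edge $TT'$.

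Next I would partition $V(G_H)=I(H)$ relative to the edge $TT'$ by setting $L=\{x : d_{G_H}(x,T)<d_{G_H}(x,T')\}$, $R=\{x : d_{G_H}(x,T')<d_{G_H}(x,T)\}$, and $M=\{x : d_{G_H}(x,T)=d_{G_H}(x,T')\}$ (vertices in a different component than $T$ falling into $M$, both distances being $\infty$). Because $d_{G_H}(T,T')=1$, the triangle inequality forces $|d_{G_H}(x,T)-d_{G_H}(x,T')|\le 1$, so in fact $L=\{x : d_{G_H}(x,T')=d_{G_H}(x,T)+1\}$ and symmetrically for $R$; note $T\in L$, $T'\in R$, and $\{L,R,M\}$ partitions $I(H)$.

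The heart of the argument is the claim that if $\{T,T'\}$ occurs consecutively on a geodesic from $S$ to $S'$, then one of $S,S'$ lies in $L$ and the other in $R$. Writing the geodesic as $S\rightsquigarrow T\to T'\rightsquigarrow S'$ (the reverse order being symmetric), the prefix $S\rightsquigarrow T\to T'$ is itself a geodesic, so $d_{G_H}(S,T')=d_{G_H}(S,T)+1$, giving $S\in L$; likewise the suffix gives $d_{G_H}(S',T)=d_{G_H}(S',T')+1$, i.e.\ $S'\in R$. In particular neither endpoint lies in $M$. Hence the set of pairs in question is contained in $\{\{x,y\} : x\in L,\ y\in R\}$, whose cardinality is $|L|\cdot|R|$ since $L\cap R=\emptyset$.

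Finally I would close with the arithmetic--geometric mean inequality: since $|L|+|R|\le |L|+|R|+|M|=k$, we get $|L|\cdot|R|\le\left(\tfrac{|L|+|R|}{2}\right)^2\le\tfrac{k^2}{4}$, which is the asserted bound. The main obstacle is twofold: first, reading the statement correctly as a bound on the number of vertex pairs whose shortest exact paths use $TT'$ (an edge--betweenness count), rather than as the number of times $T,T'$ repeat within one path (which is at most one, since exact paths have distinct parts); and second, the geodesic--subpath step that pins the two endpoints into $L$ and $R$ while excluding $M$. The extremal case, where $TT'$ is a bridge of $G_H$ splitting $I(H)$ into two halves of size $k/2$, shows that the bound $\tfrac{k^2}{4}$ is best possible.
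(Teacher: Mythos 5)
Your proof is correct. It opens with the same reduction as the paper --- a shortest exact path joining $S,S'\in I(H)$ is a shortest path between the vertices $S$ and $S'$ of $G_H$, and any $\{T,T'\}\in\tau(H)$ is an edge of $G_H$ --- but where the paper then stops and cites Lemma~4.1 of \cite{mohar} as a black box, you re-derive that graph-theoretic fact: partition $V(G_H)$ into $L$, $R$, $M$ by distance to $T$ and $T'$, use the fact that subpaths of geodesics are geodesics to force the endpoints of any geodesic traversing $TT'$ into $L$ and $R$ respectively (excluding $M$), and finish with AM--GM, $|L|\,|R|\le \frac{k^2}{4}$. This is in substance the standard proof of Mohar's lemma, so the mathematical core of the two routes coincides; the paper's citation buys brevity, while your self-contained version buys two things. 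First, your formulation bounds the pairs for which \emph{some} geodesic uses $TT'$, which covers every possible choice of shortest paths simultaneously --- exactly the form needed when the lemma is invoked for the characteristic functions $\chi_{P_{SS'}}$ and $\chi_{P'_{SS'}}$ in Theorem~\ref{EDESD thm}. Second, your argument uses only distances, so it applies verbatim when $H$ has multiple edges and $G_H$ is consequently a multigraph, whereas the cited lemma is stated for simple graphs; that mismatch in the paper is harmless (parallel edges do not change distances), but your proof never encounters it. Your initial disambiguation of the statement --- as an edge-betweenness count over vertex pairs rather than a count of repetitions inside a single path --- is also the reading the paper's application requires.
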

\begin{proof}
	Since any $\{T, T'\}\in \tau(H)$ is an edge in $G_H$, and any shortest exact path from $S$ to $S'$ in $H$ is a shortest path from $S$ to $S'$ in $G_H$, the result follows from~\cite[Lemma~4.1]{mohar}:``Let $G$ be a simple graph on $n$ vertices. For each pair $u, v$ of distinct vertices of $G$ choose a shortest path $P_{uv}$ from $u$ to $v$. Then any edge in $G$ belongs to at most $\frac{n^2}{4}$ of the paths $P_{uv}$".
\end{proof}
\begin{thm}\label{EDESD thm}
	Let $H$ be a deeply connected, simple hypergraph with $e$-index $k$. Then 
		\begin{itemize}
		\item[(i)] $UD(H)\geq EED(H)\geq ED(H)\geq\frac{4}{k \nu_{k-1}(H)}-ESD(H)$;
		\item[(ii)] $SUD(H)\geq SEED(H)\geq SED(H)\geq ED(H)\geq\frac{4}{k \nu_{k-1}(H)}-ESD(H).$
	\end{itemize}
	
\end{thm}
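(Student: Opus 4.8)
The plan is to recognize that the two displayed chains of diameters are already contained in the monotonicity results proved earlier, so that the only genuinely new assertion is the terminal lower bound $ED(H)\ge \frac{4}{k\,\nu_{k-1}(H)}-ESD(H)$; and then to observe that this bound is an immediate consequence of the diameter--eigenvalue estimate for $G_H$ together with the non-negativity of $ED(H)$.

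First I would dispose of the chains. For part~(i), the inequalities $UD(H)\ge EED(H)\ge ED(H)$ are the relevant portion of Theorem~\ref{chain1}(i): every unified path is an edge exact path and every edge exact path is an exact path, so $ud_H(u,v)\ge eed_H(u,v)\ge ed_H(u,v)$ for all $u,v\in V(H)$, and taking maxima preserves the inequalities (with the usual convention that a diameter is $\infty$ when the corresponding connectedness fails). For part~(ii), the inequalities $SUD(H)\ge SEED(H)\ge SED(H)$ come from Theorem~\ref{chain1}(ii) in the same way, while $SED(H)\ge ED(H)$ is the last inequality of Theorem~\ref{chain2}(iii), whose hypothesis (deeply connected) is exactly the one we are assuming. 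Thus both parts reduce to establishing the single estimate on $ED(H)$.

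For the terminal inequality I would argue as follows. Since $H$ is deeply connected, its associated graph $G_H$ is connected, $ESD(H)$ equals the diameter of $G_H$, and the second smallest unified Laplacian eigenvalue $\nu_{k-1}(H)$ equals the algebraic connectivity of $G_H$. Applying the classical diameter bound \cite[Theorem~7.5.5]{cvetko} to $G_H$ gives
\[
ESD(H)=\operatorname{diam}(G_H)\ge\left\lceil\frac{4}{k\,\nu_{k-1}(H)}\right\rceil\ge\frac{4}{k\,\nu_{k-1}(H)},
\]
so that $\frac{4}{k\,\nu_{k-1}(H)}-ESD(H)\le 0$. Because $H$ is deeply connected it is in particular exactly connected, hence every $ed_H(u,v)$ is finite and $ED(H)\ge 0$; therefore
\[
ED(H)\ge 0\ge \frac{4}{k\,\nu_{k-1}(H)}-ESD(H),
\]
which is the claimed inequality and completes both chains.

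The main point---rather than a genuine obstacle---is the observation that the right-hand side $\frac{4}{k\,\nu_{k-1}(H)}-ESD(H)$ is non-positive, so that the lower bound on $ED(H)$ holds trivially once the diameter--eigenvalue estimate for $G_H$ is invoked; equivalently, the statement is best read as the sum bound $ED(H)+ESD(H)\ge \frac{4}{k\,\nu_{k-1}(H)}$. The only care required is bookkeeping: ensuring every diameter appearing in the chains is well-defined (or interpreted as $\infty$) under the stated connectedness, and matching each link of the chains to the correct part of Theorems~\ref{chain1} and~\ref{chain2} via the relationships summarized in Figure~\ref{diag}.
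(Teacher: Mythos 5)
Your proposal is correct, and your handling of the two chains is exactly the paper's: part~(i) via Theorem~\ref{chain1}(i) (using that deep connectedness implies uni-connectedness), and part~(ii) via Theorem~\ref{chain1}(ii) together with the inequality $SED(H)\geq ED(H)$ from Theorem~\ref{chain2}(iii). Where you genuinely diverge is the terminal inequality. The paper proves $ED(H)+ESD(H)\geq \frac{4}{k\,\nu_{k-1}(H)}$ from scratch by adapting Mohar's eigenvector argument: it takes an eigenvector $x$ for $\nu_{k-1}(H)$, splits the ordered pairs of $I(H)$ into the class $\mathcal{Z}$ (pairs realizing vertex $e$-distances, whose shortest exact paths have length at most $ED(H)$) and its complement (paths of length at most $ESD(H)$), and controls how often an element of $\tau(H)$ can appear in these paths via Lemma~\ref{pathLem}. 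You instead observe that the same Mohar bound \cite[Theorem~7.5.5]{cvetko}, which the paper itself already invokes for the preceding theorem, applied to the connected graph $G_H$ gives $ESD(H)=\mathrm{diam}(G_H)\geq \frac{4}{k\,\nu_{k-1}(H)}$, so the right-hand side $\frac{4}{k\,\nu_{k-1}(H)}-ESD(H)$ is non-positive while $ED(H)\geq 0$, and the inequality holds trivially. Your route is much shorter, and it exposes something worth noting: the theorem as stated is strictly weaker than the bound $ESD(H)\geq \bigl\lceil \frac{4}{k\,\nu_{k-1}(H)}\bigr\rceil$ already available, since the conclusion adds nothing beyond $ED(H)\geq 0$. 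What the paper's self-contained argument buys is the pair-splitting technique itself, which could in principle yield a genuinely mixed bound in which $ED(H)$ plays a real role (for instance, if the multiplicity estimate of Lemma~\ref{pathLem} were applied jointly to $\mathcal{Z}$ and $\mathcal{Z}^c$ rather than to each class separately); as executed, however, it recovers only the weaker sum bound, so your trivialization is a fair and accurate assessment of the statement's content.
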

\begin{proof}
	\begin{itemize}
		\item [(i)] We begin by proving the last inequality, using a method analogous to the proof of Theorem~4.2 in \cite{mohar}.
		Let $x$ be an eigenvector corresponding to the eigenvalue $\nu_{k-1}(H)$. Then we have		
		\begin{align}\label{E1}
		2k\underset{\{S, S'\}\in \tau(H)}{\sum} (x_S-x_{S'})^2&=\nu_{k-1}(H)\underset{S, S'\in I(H)}{\sum}(x_S-x_{S'})^2\nonumber\\
		&=\nu_{k-1}(H) \left[\underset{(S, S')\in \mathcal{Z}}{\sum}(x_S-x_{S'})^2+\underset{(S, S')\in \mathcal{Z}^c}{\sum}(x_S-x_{S'})^2\right],
		\end{align}
		where $\mathcal{Z}$ is the set of all ordered pairs $(S, S')$ such that $S$ and $S'$ are respectively initial part and terminal part of a shortest exact path in $H$ with $u\in S$, $v\in S'$ for some $u,v\in V(H)$; $\mathcal{Z}^c=I(H)\backslash \mathcal{Z}.$
		
		Consider $(S,S')\in \mathcal{Z}$. Let $P_{SS'}:=S,S_1,S_2,\dots, S_{t-1},S'$ be a corresponding shortest exact path such that $u\in S$, $v\in S'$ for some $u,v\in V(H)$, and let $\mathcal{F}(P_{SS'})=\{\{S,S_1\},\{S_1,S_2\},\dots,$ $\{S_{t-1},S'\}\}$.
		Then, we have
		\begin{align}\label{ED}
		(x_S-x_{S'})^2&=[(x_S-x_{S_1})+(x_{S_1}-x_{S_2})+\dots+(x_{S_{t-1}}-x_{S'})]^2 \nonumber\\ 
		&\leq ed_H(u,v)\underset{\{T,T'\}\in \mathcal{F}(P_{SS'})}{\sum}\delta^2(\{T,T'\})\nonumber\\
		&\leq ED(H) \underset{\{T,T'\}\in \mathcal{F}(P_{SS'})}{\sum}\delta^2(\{T,T'\}),
		\end{align}
		where $\delta^2(\{T,T'\})=(x_T-x_{T'})^2$.
		
		Now consider $(S,S')\in \mathcal{Z}^c$. Since $H$ is deeply connected, there exists a shortest exact path $P'_{SS'}$ from $S$ to $S'$. Then, similar to \eqref{ED}, we get  
		\begin{align}\label{ESD}
		(x_S-x_{S'})^2\leq ESD(H) \underset{\{T,T'\}\in \mathcal{F}(P'_{SS'})}{\sum}\delta^2(\{T,T'\}).
		\end{align}
		
		Applying~\eqref{ED} and \eqref{ESD} in \eqref{E1}, we get
		\begin{align}\label{E2}
		2k\underset{\{S, S'\}\in \tau(H)}{\sum} (x_S-x_{S'})^2\leq &~\nu_{k-1}(H)\underset{(S,S')\in \mathcal{Z}}{\sum} ED(H) \underset{\{T,T'\}\in \mathcal{F}(P_{SS'})}{\sum}\delta^2(\{T,T'\})\nonumber\\
		&+\nu_{k-1}(H)\underset{(S,S')\in \mathcal{Z}^c}{\sum}ESD(H)\underset{\{T,T'\}\in \mathcal{F}(P'_{SS'})}{\sum}\delta^2(\{T,T'\})\nonumber\\
		\leq &~\nu_{k-1}(H)\underset{\{T,T'\}\in \tau(H)}{\sum}\underset{(S,S')\in \mathcal{Z}}{\sum} ED(H)\delta^2(\{T,T'\})\chi_{P_{SS'}}(\{T,T'\})\nonumber\\
		&+\nu_{k-1}(H)\underset{\{T,T'\}\in \tau(H)}{\sum}\underset{(S,S')\in \mathcal{Z}^c}{\sum}ESD(H)\delta^2(\{T,T'\})\chi_{P'_{SS'}}(\{T,T'\}),
		\end{align}
		where $\chi_{P_{SS'}}:\tau(H)\to \{0,1\}$ is the characteristic function of $\mathcal{F}(P_{SS'})$ on $\tau(H)$, i.e., for each $\{T,T'\}\in \tau(H)$,
		
		$$\chi_{P_{SS'}}(\{T,T'\})=
		\begin{cases}
		1, & \text{if}~ \{T,T'\}\in \mathcal{F}(P_{SS'});\\
		0, & \text{otherwise},
		\end{cases}$$
		and similarly, $\chi_{P'_{SS'}}$ is the characteristic function of $\mathcal{F}(P'_{SS'})$ on $\tau(H)$.
		
		Now from Lemma~\ref{pathLem}, for $\{T,T'\}\in \tau(H)$, we have $$\underset{(S,S')\in \mathcal{Z}}{\sum}\chi_{P_{SS'}}(\{T,T'\})\leq \frac{k^2}{2}~\text{and}~\underset{(S,S')\in \mathcal{Z}^c}{\sum}\chi_{P'_{SS'}}(\{T,T'\})\leq \frac{k^2}{2}.$$
		Thus substituting the above inequalities in \eqref{E2}, we have
		\begin{align}\label{E3}
		2k\underset{\{S, S'\}\in \tau(H)}{\sum} (x_S-x_{S'})^2 \leq \frac{\nu_{k-1}(H)k^2}{2}\left(\underset{\{T,T'\}\in \tau(H)}{\sum}\delta^2(\{T,T'\})\right)(ED(H)+ESD(H)).
		\end{align}
	Since $\delta^2(\{T,T'\})=(x_T-x_{T'})^2$ for $\{T,T'\}\in \tau(H)$, \eqref{E3} becomes $$2k \leq \frac{\nu_{k-1}(H)k^2}{2}(ED(H)+ESD(H)).$$
		Hence the last inequality of part~$(i)$ follows.
		
	Since $H$ is deeply connected, it is uni-connected. Therefore, the remaining inequalities follow from part~$(i)$ of Theorem~\ref{chain1} and the last inequality of this part.
		
		\item[(ii)] Since $H$ is deeply connected, it is strong uni-connected. So the first three inequality follows from part~$(ii)$ of Theorem~\ref{chain1}. Also, from part~$(iii)$ of Theorem~\ref{chain2}, $SED(H)\geq ED(H)$. Therefore, part~$(iii)$ follows from the last inequality of part~$(i)$ of this result.
	\end{itemize}
\end{proof}


\subsection{Deep exact components, exact trees and unified Laplacian eigenvalues}
In this subsection, we introduce some more new structures on hypergraphs, and study the interplay between these structures and unified Laplacian spectrum of the hypergraphs. 
\begin{defn}\normalfont
	Let $H$ be a hypergraph. Let $D\subseteq I(H)$ be such that for each $S\in D$ with $|S|>1$ has atleast one neighbor in $D$. The subhypergraph $H'$ of $H$  induced by all the exact paths joining $S$ and $S'$ whose parts belongs to $D$ for all distinct elements $S,S'\in D$ and the vertices corresponding to all the singletons in $D$ is called the \textit{exact subhypergraph of $H$ induced by $D$}.
\end{defn}

Notice that in a graph $G$, the exact subhypergraph of $G$ induced by $D$ is the induced subgraph of $G$ induced by the vertex subset $D\subseteq V(G)$.
\begin{example}\normalfont
	Consider the hypergraph $H$ shown in Figure~\ref{induced}$(a)$. Let $D=\{\{1\},\{4\},\{5\},\{6\},$ $\{2,3\}\}$. Then the exact subhypergraph of $H$ induced by $D$ is as shown in Figure~\ref{induced}$(b)$.
	\begin{figure}[ht]
		\begin{center}
			\includegraphics[scale=1]{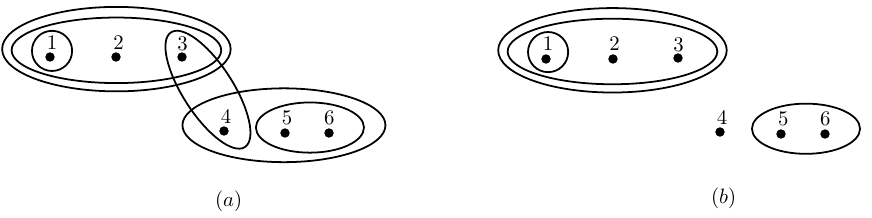}
		\end{center}\caption{(a) The hypergraph $H$; (b) The exact subhypergraph of $H$ induced by $D$}\label{induced}
	\end{figure} 
\end{example}
\begin{defn}\normalfont
	We define a relation $\rho$ on $I(H)$ as follows:
	For $S$, $S'$ in $I(H)$, $S~\rho~S'$ if and only if either $S=S'$ or there exists an exact path joining $S$ and $S'$ in $H$. 
	It is not hard to see that $\rho$ is an equivalence relation on $I(H)$. Let $[S]$ denote the equivalence class of $\rho$ determined by $S\in I(H)$. We call the exact subhypergraph of $H$ induced by $[S]$ as the \textit{deep exact component} (or simply the \textit{$DE$-component})  \textit{of $H$ corresponding to $[S]$}. We call a $DE$-component of $H$ corresponding to $[S]$ as \textit{trivial} if $[S]$ is a singleton and it contains a singleton element of $I(H)$.
\end{defn}
Notice that if $G$ is a graph, then the $DE$-component of $G$ corresponding to the equivalence class $[S]$ is nothing but the component of $G$ whose vertex set is $[S]$.
\begin{defn}\normalfont
	A $DE$-component $H'$ of a hypergraph $H$ is said to have \textit{multiplicity} $k$ if $H'$ is corresponding to exactly $k$ distinct equivalence classes induced by $\rho$.
\end{defn}
If $H$ is a deeply connected hypergraph, then any two distinct elements in $I(H)$ are connected by an exact path. Therefore, $\rho$ induces exactly one equivalence class, which is, $I(H)$, and so $H$ is the only $DE$-component of $H$ with multiplicity one.
\begin{example}\normalfont\label{comp exmp}
	Consider the hypergraph $H$ shown in Figure~\ref{induced}$(a)$.
	Then the equivalence classes corresponding to $\rho$ are the following:
	$[\{1\}]=\{\{1\},\{2,3\}\}$, $[\{2\}]=\{\{2\},\{1,3\}\}$, $[\{3\}]=\{\{3\},\{4\},\{1,2\},$ $\{5,6\}\}$ and
	$[\{5\}]=\{\{5\},\{6\},\{4,5\},\{4,6\}\}$. The subhypergraphs $H_1$, $H_2$, $H_3$ and $H_4$ shown in Figure~\ref{fig4} are the $DE$-components of $H'$ corresponding to $[\{1\}]$, $[\{2\}]$, $[\{3\}]$ and $[\{5\}]$, respectively. Notice that each $DE$-component of $H'$ is with multiplicity one.
\end{example}
\begin{figure}[ht]
	\begin{center}
		\includegraphics[scale=.85]{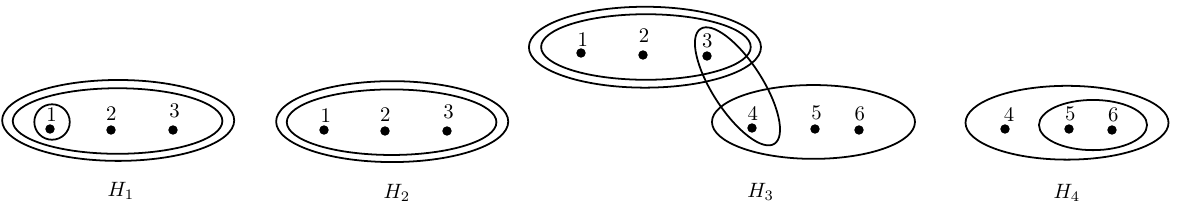}
	\end{center}\caption{The $DE$-components of the hypergraph $H$}\label{fig4}
\end{figure}

\begin{lemma}\label{comp}
Let $H$ be a simple hypergraph. Then there is a one-to-one correspondence between the set of all equivalence classes induced by $\rho$ on $I(H)$, and the set of all components of $G_H$.
	More precisely,
	there is a one-to-one correspondence between the multiset of all $DE$-components of $H$ whose number of occurrences are their multiplicities, and the set of all components of $G_H$.
\end{lemma}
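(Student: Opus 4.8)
The plan is to show that exact paths in $H$ are, after identifying each part with the corresponding vertex of $G_H$, exactly the paths of $G_H$, and then to read off both assertions from this single translation. First I would record the basic dictionary: by the definition of the associated graph, for $T,T'\in I(H)$ we have $T\sim T'$ in $H$ if and only if $TT'$ is an edge of $G_H$, equivalently $\{T,T'\}\in\tau(H)$. Consequently a sequence $S_0,S_1,\dots,S_n$ of elements of $I(H)$ is an exact path in $H$ precisely when it is a path in $G_H$: each consecutive pair forms a $2$-partition of some edge of $H$, which is exactly an edge of $G_H$, and the requirement that all parts be distinct coincides with distinctness of the vertices $S_0,\dots,S_n$ of $G_H$. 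Hence, for $S,S'\in I(H)$, there is an exact path joining $S$ and $S'$ in $H$ if and only if $S$ and $S'$ lie in the same connected component of $G_H$. This is the key observation, and essentially all of the content lies here; it is the same correspondence already used implicitly in Lemma~\ref{pathLem} and in the remark following Definition~\ref{esD}.

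From this equivalence, $S\,\rho\,S'$ holds if and only if $S$ and $S'$ belong to the same component of $G_H$. Therefore the equivalence classes of $\rho$ on $I(H)$ are exactly the vertex sets of the connected components of $G_H$, and the assignment sending $[S]$ to the component of $G_H$ whose vertex set is $[S]$ is a well-defined bijection between the set of equivalence classes of $\rho$ and the set of components of $G_H$. This settles the first assertion.

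For the more precise statement I would pass from equivalence classes to $DE$-components while tracking multiplicities. Each class $[S]$ induces a $DE$-component $H'_{[S]}$ of $H$, and by definition the multiplicity of a $DE$-component $H'$ is the number of distinct classes $[S]$ with $H'_{[S]}=H'$. Thus the multiset of all $DE$-components, in which each $DE$-component occurs as many times as its multiplicity, is in canonical bijection with the set of equivalence classes of $\rho$ (one occurrence of the multiset for each class). Composing this bijection with the one from the previous paragraph yields the asserted one-to-one correspondence between the multiset of $DE$-components and the set of components of $G_H$.

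The step to watch is this last one: the map sending a class $[S]$ to its induced subhypergraph $H'_{[S]}$ need \emph{not} be injective, since distinct classes can induce the very same subhypergraph of $H$ (for instance, when a single edge of cardinality greater than two is the only edge meeting its vertices, all of its $2$-partitions produce the same induced subhypergraph, as in Example~\ref{comp exmp}-type configurations). It is precisely this failure of injectivity that the multiplicity is designed to absorb; because the components of $G_H$ are always genuinely distinct — they have different vertex sets inside $I(H)$ even when they happen to be isomorphic — counting $DE$-components with multiplicity restores the clean bijection with the components of $G_H$.
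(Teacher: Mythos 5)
Your proof is correct and takes essentially the same route as the paper's: both rest on the dictionary that exact paths in $H$ are precisely paths in $G_H$, so that the equivalence classes of $\rho$ are exactly the vertex sets of the components of $G_H$. The paper spells this out by defining the map sending $[S]$ to the subgraph of $G_H$ induced by $[S]$ and verifying connectedness, maximality, injectivity and surjectivity one by one, whereas you read the bijection off directly from the path correspondence; moreover, where the paper dismisses the passage from classes to the multiset of $DE$-components with ``it can be easily seen,'' you correctly make explicit the one point that needs care — distinct classes can induce the same subhypergraph, and counting with multiplicity is exactly what absorbs this failure of injectivity.
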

\begin{proof}
	Let $\mathcal{X}$ be the set of all equivalence classes induced by the relation $\rho$ on $I(H)$, and let $\mathcal{Y}$ be the set of all components of $G_H$. 
	
	It can be easily seen that $\mathcal{X}$ is the same as the multiset of $DE$-components of $H$ whose number of occurrences are their multiplicities. Thus to prove this result, it is enough to show that there is a bijection from $\mathcal{X}$ into $\mathcal{Y}$.
	
	Let $S\in I(H)$ and let $[S]\in \mathcal{X}$. Notice that $[S]\subseteq V(G_H)$. Now, we consider the subgraph $G'_H$ of $G_H$ induced by $[S]$. We show that $G'_H$ is a component of $G_H$. Let $S', S''\in V(G'_H)$. Since $V(G'_H)=[S]$, we have $S'\rho S''$. So, there exists an exact path in $H'$ joining $S'$ and $S''$ whose parts belongs to $[S]$. This gives a path joining $S'$ and $S''$ in $G'_H$. Therefore, $G_H'$ is connected. To show $G'_H$ is a component of $G_H$, it is enough to show that there is no $T\in V(G_H)$ with $T\notin V(G'_H)$ such that $T$ is adjacent to any $T'\in V(G'_H)$. Suppose such a $T$ exists. Then $T\rho T'$ for some $T'\in V(G'_H)~(=[S])$ and so $T\in [S]$. This contradict to the fact that $T\notin V(G'_H)$.
	Thus, corresponding to $[S]\in \mathcal{X}$, we get $G_H'\in \mathcal{Y}$ whose vertex set is $[S]$.
	
	Now, we define a map $f:\mathcal{X}\to \mathcal{Y}$ by $f([S])=G_H'$, where $G_H'$ is the subgraph of $G_H$ induced by $[S]$.
	Clearly, the map $f$ is well defined.
	
	We shall show that the map $f$ is $1-1$.
	Let $[S]$, $[T]\in \mathcal{X}$ with $[S]\neq [T]$. Let $G'$ and $G''\in Y$  be the elements associated with $[S]$ and $[T]$ under $f$. Since $[S]\neq[T]$, we have $S\in [S]$ but $S\notin [T]$. So, $S\in V(G')$ but $S\notin V(G'')$. Therefore, $G'$ and $G''$ are two different components of $G_H$, establishing that $f$ is $1-1$.
	
	Next, we show that $f$ is onto. Let $G_H'\in \mathcal{Y}$. Since $G_H'$ is connected, there is a path joining any two of its distinct vertices. Also, there is no vertex $S\in V(G_H)\backslash V(G_H')$ is adjacent to any vertex in $G_H'$. It is evident that $V(G_H')$ is an equivalence class induced by $\rho$ in $I(H)$, i.e., $V(G_H')\in \mathcal{X}$. It is clear that $f(V(G_H'))=G_H'$. Therefore, $f$ is onto. This completes the proof.
\end{proof} 
\begin{thm}\label{thm}
	Let $H$ be a simple hypergraph with $e$-index $k$. Then the number of $DE$-components of $H$ counting with their multiplicities equals $k-r(\mathbf{R}(H))$.
\end{thm}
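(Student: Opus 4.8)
The plan is to reduce the statement to a purely spectral count on the associated graph $G_H$, and then transport it back through the two identities already at our disposal: $\mathbf{U}^{\mathbf{L}}(H)=L(G_H)$ for simple (hence loopless) $H$, and $r(\mathbf{U}^{\mathbf{L}}(H))=r(\mathbf{R}(H))$ from Lemma~\ref{Ob}$(iii)$. By Lemma~\ref{comp}, the number of $DE$-components of $H$ counted with their multiplicities is exactly the number of connected components of $G_H$. Thus it suffices to prove that the number of components of $G_H$ equals $k-r(\mathbf{R}(H))$.

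I would first recall that $G_H$ has $|I(H)|=k$ vertices and, since $H$ is simple, $\mathbf{U}^{\mathbf{L}}(H)=L(G_H)$. Writing $c(G_H)$ for the number of components of $G_H$, the central fact I would invoke is that the nullity of the Laplacian of a (multi)graph equals its number of connected components, that is, $k-r(L(G_H))=c(G_H)$. Combining this with the identity $\mathbf{U}^{\mathbf{L}}(H)=L(G_H)$ and then with Lemma~\ref{Ob}$(iii)$ yields
\[ c(G_H)=k-r(L(G_H))=k-r(\mathbf{U}^{\mathbf{L}}(H))=k-r(\mathbf{R}(H)), \]
and the theorem then follows immediately from the component count supplied by Lemma~\ref{comp}.

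The only ingredient that is not pure bookkeeping is the claim $k-r(L(G_H))=c(G_H)$, and since $G_H$ may carry parallel edges I would not simply cite the simple-graph version but verify it directly from the quadratic form. For $x\in\mathbb{R}^{I(H)}$ one has $x^{T}L(G_H)x=\sum_{S\sim S'}c_{SS'}(x_S-x_{S'})^2$, where $c_{SS'}$ denotes the multiplicity of the edge $\{S,S'\}$ in $G_H$. Since every $c_{SS'}>0$, this form vanishes precisely when $x$ is constant on each connected component of $G_H$; hence the kernel of $L(G_H)$ is spanned by the indicator vectors of the components, its dimension is $c(G_H)$, and therefore $r(L(G_H))=k-c(G_H)$. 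I expect this positive semi-definite kernel argument to be the only point requiring genuine care, as the remainder of the proof is merely the assembly of Lemma~\ref{comp} and Lemma~\ref{Ob}$(iii)$.
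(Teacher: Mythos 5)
Your proof is correct, but it takes a genuinely different route from the paper's, so a comparison is worthwhile. Both arguments start from Lemma~\ref{comp}, identifying the number of $DE$-components of $H$ (counted with multiplicities) with the number of components of $G_H$; the difference is how $r(\mathbf{R}(H))$ gets connected to that component count. The paper observes (via Note~\ref{note1}) that $\mathbf{R}(H)$ \emph{is} the vertex-arc incidence matrix $R(G_H)$ and then cites the incidence-matrix rank formula of~\cite[Theorem~2.3]{bapat}, namely $r(R(G))=n-t$ for a simple graph $G$ on $n$ vertices with $t$ components. You instead route through the Laplacian: Lemma~\ref{Ob}$(iii)$ gives $r(\mathbf{R}(H))=r(\mathbf{U}^{\mathbf{L}}(H))=r(L(G_H))$, and you prove from scratch that $r(L(G_H))=k-c(G_H)$ by showing that the kernel of the positive semi-definite form $x^{T}L(G_H)x=\sum_{S\sim S'}c_{SS'}(x_S-x_{S'})^2$ is spanned by the component indicator vectors. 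What your route buys is self-containedness (no appeal to the incidence-matrix rank theorem) and validity for multigraphs; what the paper's buys is brevity. One small remark: your stated reason for avoiding the simple-graph citation --- that ``$G_H$ may carry parallel edges'' --- does not actually arise here, since $H$ simple forces $G_H$ to be simple (a fact the paper uses repeatedly: for a simple hypergraph, a pair $\{S,S'\}$ can partition at most one edge, namely $S\cup S'$). So the extra generality of your kernel argument is unnecessary, though entirely harmless.
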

\begin{proof}
	Notice that $r(\mathbf{R}(H))=r(R(G_H))$. Hence the proof directly follows from  Lemma~\ref{comp} and~\cite[Theorem~2.3]{bapat}: ``If $G$ is a simple graph on $n$ vertices and has $t$ connected components, then $r(R(G))=n-t$".
\end{proof}
\begin{thm}\label{econst}
	Let $H$ be a simple hypergraph. Then the algebraic multiplicity of $0$ as an eigenvalue of $\mathbf{U}^{\mathbf{L}}(H)$ equals the number of $DE$-components of $H$ counting with their multiplicities.	
\end{thm}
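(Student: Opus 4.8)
The plan is to reduce the statement to the corresponding classical fact about the Laplacian matrix of the associated graph $G_H$, since all the structural machinery needed has already been assembled in the excerpt. The key bridge is the observation recorded just after the definition of the unified Laplacian matrix: for a simple (hence loopless) hypergraph $H$, we have $\mathbf{U}^{\mathbf{L}}(H)=L(G_H)$. Consequently $\sigma(\mathbf{U}^{\mathbf{L}}(H))=\sigma(L(G_H))$, and in particular the algebraic multiplicity of $0$ as an eigenvalue of $\mathbf{U}^{\mathbf{L}}(H)$ equals the algebraic multiplicity of $0$ as an eigenvalue of $L(G_H)$. This is the first step, and it is essentially immediate from the definitions.

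Next I would invoke the standard theorem from spectral graph theory, which I may cite from the references already used (for instance the treatment in \cite{bapat} or \cite{cvetko}): for a simple graph $G$, the algebraic multiplicity of the eigenvalue $0$ of $L(G)$ equals the number of connected components of $G$. Applying this to $G=G_H$, the multiplicity of $0$ in $\sigma(\mathbf{U}^{\mathbf{L}}(H))$ equals the number of connected components of $G_H$. The final step is to translate ``number of components of $G_H$'' back into the hypergraph language: Lemma~\ref{comp} furnishes exactly this dictionary, asserting a one-to-one correspondence between the multiset of all $DE$-components of $H$ (with each counted according to its multiplicity) and the set of all components of $G_H$. Hence the number of components of $G_H$ equals the number of $DE$-components of $H$ counted with multiplicities, which completes the chain of equalities.

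Alternatively, and perhaps more in the spirit of the surrounding results, one can route the argument through the rank identities already proved. By Lemma~\ref{Ob}(iii) we have $r(\mathbf{U}^{\mathbf{L}}(H))=r(\mathbf{R}(H))$, so the multiplicity of $0$ as an eigenvalue of $\mathbf{U}^{\mathbf{L}}(H)$ is $k-r(\mathbf{U}^{\mathbf{L}}(H))=k-r(\mathbf{R}(H))$. But Theorem~\ref{thm} states precisely that the number of $DE$-components of $H$ counted with multiplicities equals $k-r(\mathbf{R}(H))$. Identifying these two expressions yields the result directly, without re-deriving the component count. This second route is cleaner because it reuses the structural bookkeeping already completed and avoids re-citing the classical nullity theorem.

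I do not anticipate a serious obstacle here, as the theorem is a corollary of results proved earlier in the excerpt. The only point requiring mild care is the hypothesis that $H$ is \emph{simple}: both Lemma~\ref{Ob}(iii) and the identity $\mathbf{U}^{\mathbf{L}}(H)=L(G_H)$ (which depends on $H$ being loopless so that $G_H$ is well-defined with degrees $d^*_H(S)$) rely on simplicity, and Lemma~\ref{comp} and Theorem~\ref{thm} are likewise stated for simple $H$. So I would be careful to state that the argument uses simplicity to guarantee the equality $\mathbf{U}^{\mathbf{L}}(H)=L(G_H)$ (or equivalently the positive-semidefiniteness and rank facts of Lemma~\ref{Ob}), and otherwise the proof is a short concatenation of the two cited internal results.
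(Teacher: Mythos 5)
Your proposal is correct, and your ``alternative'' route is precisely the paper's own proof: the paper notes that the algebraic multiplicity of $0$ equals the nullity $k-r(\mathbf{U}^{\mathbf{L}}(H))$ (legitimate since the matrix is real symmetric, hence diagonalizable), then applies Lemma~\ref{Ob}(iii) to replace $r(\mathbf{U}^{\mathbf{L}}(H))$ by $r(\mathbf{R}(H))$, and concludes by Theorem~\ref{thm}. Your primary route---identifying $\mathbf{U}^{\mathbf{L}}(H)=L(G_H)$, citing the classical fact that the nullity of $L(G)$ counts components, and translating back via Lemma~\ref{comp}---is a valid equivalent repackaging, since Theorem~\ref{thm} itself was obtained from Lemma~\ref{comp} together with the rank formula for the graph incidence matrix; so the two routes differ only in where the classical graph-theoretic input is invoked, not in substance.
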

\begin{proof}
	Notice that the algebraic multiplicity of the unified Laplacian eigenvalue $0$ of $H$ is the same as the nullity of $\mathbf{U}^{\mathbf{L}}(H)$, which is equal to $|I(H)|-r(\mathbf{U}^{\mathbf{L}}(H))$.
	So, the proof follows from Lemma~\ref{Ob}$(iii)$ and Theorem~\ref{thm}. 
\end{proof}
\begin{cor}
	Let $H$ be a simple hypergraph with $e$-index $k$. Then, $\nu_{k-1}(H)\neq 0$ if and only if $H$ is deeply connected.
\end{cor}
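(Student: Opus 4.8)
The plan is to deduce the corollary directly from Theorem~\ref{econst} by first pinning down the role of the eigenvalue $0$. Since $H$ is simple, Lemma~\ref{Ob}(ii) guarantees that $\mathbf{U}^{\mathbf{L}}(H)$ is positive semi-definite, so every $\nu_i(H)\geq 0$; and Lemma~\ref{Ob}(i) says the row and column sums of $\mathbf{U}^{\mathbf{L}}(H)$ vanish, whence the all-ones vector $J_{k\times 1}$ lies in its kernel. Thus $0$ is always a unified Laplacian eigenvalue, and because the eigenvalues are nonnegative and ordered as $\nu_1(H)\geq\cdots\geq\nu_k(H)$, the smallest one is $\nu_k(H)=0$.

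With this in hand, the condition $\nu_{k-1}(H)\neq 0$ is equivalent to $\nu_{k-1}(H)>0$, which in turn is equivalent to saying that $0$ is a simple eigenvalue of $\mathbf{U}^{\mathbf{L}}(H)$, i.e.\ its algebraic multiplicity equals one. This is the only place where the ordering of the spectrum and positive semi-definiteness are used, and it converts the eigenvalue statement into a multiplicity statement.

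Next I would invoke Theorem~\ref{econst}, which identifies the algebraic multiplicity of $0$ with the number of $DE$-components of $H$ counted with their multiplicities. Combining the two previous steps, $\nu_{k-1}(H)\neq 0$ holds if and only if $H$ possesses exactly one $DE$-component (necessarily of multiplicity one).

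It remains to match \emph{``exactly one $DE$-component''} with deep connectedness. By Lemma~\ref{comp}, the $DE$-components of $H$ counted with multiplicities are in bijection with the components of $G_H$, so $H$ has a single $DE$-component precisely when $G_H$ is connected; and $G_H$ is connected if and only if $H$ is deeply connected, as recorded after Definition~\ref{esD}. Assembling these equivalences yields the claim. The argument is short and poses no real obstacle: all the substantial work is carried by Theorem~\ref{econst} and Lemma~\ref{comp}, and the only subtlety is to remember that $\nu_k(H)=0$ already, so that $\nu_{k-1}(H)\neq 0$ genuinely encodes simplicity of the zero eigenvalue rather than its mere absence.
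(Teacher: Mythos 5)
Your proof is correct and follows essentially the same route as the paper's: both arguments first use Lemma~\ref{Ob}(i)--(ii) to conclude $\nu_k(H)=0$, so that $\nu_{k-1}(H)\neq 0$ is equivalent to $0$ being a simple eigenvalue, and then apply Theorem~\ref{econst} to translate this into $H$ having exactly one $DE$-component, which is equivalent to deep connectedness. The only cosmetic difference is that you pass through Lemma~\ref{comp} and connectedness of $G_H$ for the last equivalence, while the paper invokes directly that deep connectedness means $\rho$ induces a single equivalence class; both are valid and substantively identical.
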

\begin{proof}
	Since $H$ is simple, it is clear that from $(i)$ and $(ii)$ of Lemma~\ref{Ob}, we have $\nu_k=0$.
	If $H$ is deeply connected, then there is exactly one $DE$-component of $H$ which is $H$ itself, and vice versa. Therefore, there is only one equivalence class for $I(H)$ with respect to $\rho$ and so the multiplicity of $H$ is one. Now, from Theorem~\ref{econst}, we have $\nu_{k-1}(H)\neq 0$. 
	Conversely, if $\nu_{k-1}(H)\neq 0$, then $0$ is the algebraic multiplicity of $0$ as an eigenvalue of $\mathbf{U}^{\mathbf{L}}(H)$ is one. Therefore, from Theorem~\ref{econst} $H$ has exactly one $DE$-component with multiplicity one, and so $H$ becomes deeply connected.  
\end{proof}
From the above corollary, $\nu_{k-1}(H)$ characterize the deeply connectedness of the hypergraph $H$. Thereby we define the following
\begin{defn}\normalfont
	Let $H$ be a hypergraph with $e$-index $k$. We refer to the second smallest eigenvalue of $\mathbf{U}^{\mathbf{L}}(H)$ as the \textit{algebraic deep connectivity} or simply the \textit{algebraic $d$-connectivity} of $H$.
\end{defn}
\begin{defn}\normalfont
	A hypergraph $H$ is said to be an \textit{exact tree} or simply an \textit{$e$-tree} if for any pair of distinct elements of $I(H)$, there exists a unique exact path joining them in $H$.
\end{defn}
It can be seen that $H$ is an $e$-tree if and only if $G_H$ is a tree. In graphs, an $e$-tree is a tree. 
\begin{example}\normalfont
	The hypergraph $H$ shown in Figure~\ref{fig5} is an $e$-tree.
	\begin{figure}[ht]\label{tree}
		\begin{center}
			\includegraphics[scale=1]{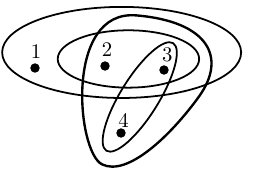}
		\end{center}\caption{The $e$-tree $H$}\label{fig5}
	\end{figure}
\end{example}
\begin{defn}\normalfont
	Let $H$ be a hypergraph. Let $H'$ be a subhypergraph of $H$ such that $I(H')=I(H)$. If there exists $D\subseteq \tau(H')$ such that for any two elements $S$ and $S'\in I(H)$, there exist a unique exact path $(S=)S_0,S_1,\dots, S_{n}(=S')$ joining $S$ and $S'$ in $H'$ with $\{S_{i}, S_{i+1}\}\in D$ for all $i=0,1,\dots,n-1$. Then we call $H'$ as an \textit{exact spanning subhypergraph of $H$}. Also, we refer to $(H',D)$ as an \textit{exact spanning pair} of $H$.
\end{defn}
For a graph $G$, an exact spanning pair $(G',D)$ is a spanning tree $G'$ in $G$, where $D$ is the set of all $2$-partitions of $E(G')$.
\begin{example}\normalfont
	Consider the hypergraph $H$ shown in Figure~\ref{fig1}.
		\begin{figure}[ht]
		\begin{center}
			\includegraphics[scale=1.3]{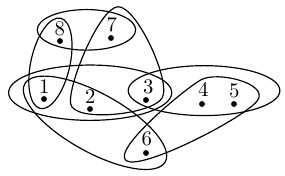}
		\end{center}\caption{The hypergraph $H$}\label{fig1}
	\end{figure} 
Then $H$ is an exact spanning subhypergraph of itself. In addition, $(H, D_1)$ and $(H, D_2)$ are exact spanning pair of $H$, where  $D_1=\tau(H)\backslash\{\{\{1\},\{2,3\}\},\{\{1,2\}, \{3\}\}\}$, and $D_2=\tau(H)\backslash\{\{\{1\},\{2,3\}\},\{\{3\},\{4,5\}\}\}$.
	
	Let $H'$ and $H''$ be hypergraphs obtained from $H$ by deleting the edges $\{1,8\}$ and $\{7,8\}$, respectively. Notice that $I(H)=I(H')=I(H'')$. Then $H'$ and $H''$ are exact spanning subhypergraphs of $H$. In addition, $(H',D')$ and $(H'',D'')$ are exact spanning pairs of $H$, where $D'=\tau(H')\backslash\{\{\{3\},\{4,5\}\}\}$ and $D''=\tau(H'')\backslash\{\{\{3\},\{4,5\}\}\}$.
\end{example}
Let $H$ be a hypergraph. Let $X$ be an eigenvector of $\mathbf{U}^{\mathbf{L}}(H)$. 
For an element $S\in I(H)$, we write $X_S$ to denote the $S$-th entry of $X$.

The following theorem gives some information about the eigenvectors corresponding to the algebraic $d$-connectivity of a hypergraph $H$.
\begin{thm}
	Let $H$ be an $e$-tree. Let $X$ be an eigenvector corresponding to the algebraic $d$-connectivity of $H$. Then exactly one of the following holds:
	\begin{itemize}
		\item No entry of $X$ is zero. In this case, there exists a unique pair of elements of $I(H)$, say $S$, $S'$ such that $S\sim S'$ with $X_S>0$ and $X_{S'}<0$. Further, the entries of $X$ are increasing along
		any exact path in $H$ which starts at $S$ and does not contain $S'$ as its part, while the entries of $X$ are decreasing
		along any exact path in $H$ which starts at $S'$ and does not contain $S$ as its part.
		\item $N_0=\{S\in I(H)~|~X_S=0\}$ is non-empty and any pair of elements in it are joined by an exact path in $H$. Moreover, there is a unique $T\in N_0$ such that $T\sim T'$ with $T'\notin N_0$. 
		The entries of $X$ are either increasing, decreasing,
		or identically $0$ along any exact path in $H$ which starts at $T$.
	\end{itemize}
\end{thm}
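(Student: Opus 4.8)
The plan is to pass to the associated graph and recognize $X$ as a Fiedler vector of a tree. Since $H$ is an $e$-tree, $G_H$ is a tree and $\mathbf{U}^{\mathbf{L}}(H)=L(G_H)$; hence the algebraic $d$-connectivity $\nu_{k-1}(H)=:\mu$ is the algebraic connectivity of the tree $G_H$, and $X$ is a corresponding Fiedler vector. Because $G_H$ is connected we have $\mu>0$, and since $X$ corresponds to $\nu_{k-1}(H)\neq 0$ it is orthogonal to the all-ones eigenvector of $\nu_k(H)=0$, so $\sum_{R\in I(H)}X_R=0$; in particular $X$ has both positive and negative entries, as it is not identically zero. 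Under this dictionary $S\sim S'$ means $S,S'$ are adjacent in $G_H$, and exact paths in $H$ are exactly the paths of the tree $G_H$, so it suffices to establish the stated structure for the Fiedler vector of a tree.

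The engine for the monotonicity statements is a flow identity. For adjacent $S,S'$, deleting the edge $\{S,S'\}$ disconnects the tree $G_H$ into two subtrees; let $C$ be the one containing $S$. Summing the eigen-equation $\mu X_R=\sum_{R'\sim R}(X_R-X_{R'})$ over all $R\in C$, every term coming from an edge internal to $C$ cancels in pairs and only the boundary edge $\{S,S'\}$ survives, which gives
\[
\mu\sum_{R\in C}X_R=X_S-X_{S'}.
\]
Because $\mu>0$, the sign of $X_S-X_{S'}$ equals that of $\sum_{R\in C}X_R$. Thus, along any path $S_0,S_1,\dots$ emanating from a chosen vertex, whether $X$ increases or decreases across the edge $\{S_i,S_{i+1}\}$ is governed by the sign of the total $X$-mass of the subtree hanging beyond $S_{i+1}$; once we know that this outer subtree lies entirely in one sign class, strict monotonicity across that edge follows.

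The remaining, and deepest, input is the connectivity of the sign domains: for the Fiedler vector of a tree the sets $\{R:X_R\geq 0\}$ and $\{R:X_R\leq 0\}$ each induce a subtree (Fiedler's theorem; this is exactly where the hypothesis that $\mu$ is the second smallest eigenvalue is used, and it fails for higher eigenvectors). Granting this, I would split into the two cases. If $N_0=\emptyset$, the complementary subtrees $\{X_R>0\}$ and $\{X_R<0\}$ partition $I(H)$, so there is a unique edge between them, giving the unique pair $S\sim S'$ with $X_S>0>X_{S'}$; applying the flow identity to every edge of a path starting at $S$ and avoiding $S'$ (so the outer subtree is wholly positive) yields the asserted increase, and symmetrically for $S'$. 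If $N_0\neq\emptyset$, then $N_0=\{X_R\geq0\}\cap\{X_R\leq0\}$ is an intersection of two subtrees, hence a subtree, so any two of its elements are joined by an exact path; the unique characteristic vertex $T$ and the sign-coherence of each branch at $T$ then follow from locating the single vertex of $N_0$ through which every path from a positive vertex to a negative vertex passes, after which the flow identity gives that $X$ is increasing, decreasing, or identically $0$ along each branch. The main obstacle is precisely the uniqueness of $T$ in the second case: sign-domain connectivity alone does not obviously forbid two boundary vertices of $N_0$, and ruling this out requires the sharper Perron-component (bottleneck-matrix) analysis of the grounded Laplacian, which again relies essentially on $\mu$ being the algebraic connectivity.
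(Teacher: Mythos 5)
Your opening paragraph is, in substance, the paper's entire proof: since $H$ is an $e$-tree, $G_H$ is a tree, $\mathbf{U}^{\mathbf{L}}(H)=L(G_H)$, and so $X$ is a Fiedler vector of the tree $G_H$; the paper then simply cites the classical structure theorem for Fiedler vectors of trees (Proposition~1.1 of \cite{tree}) and stops. Had you likewise treated that structure theorem as a citable known result --- exactly as you already treat Fiedler's sign-domain connectivity theorem --- your proposal would be complete and identical in approach to the paper's.

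The difficulties arise only in the portion where you go beyond the paper and try to reprove the structure theorem. What you do prove is sound: the flow identity $\mu\sum_{R\in C}X_R=X_S-X_{S'}$ is correct, and it genuinely settles Case~1, since the two (strict) sign classes are complementary subtrees joined by a unique edge, and for any path starting at $S$ and avoiding $S'$ the outer subtree beyond each edge lies wholly in the positive class. But in Case~2 you establish only the connectivity of $N_0$ (as an intersection of two subtrees). The uniqueness of the boundary vertex $T$, and the sign coherence of each branch at $T$ --- which is itself needed before the flow identity can certify monotonicity along a branch, since one must know the outer subtree sums have a fixed sign --- are left unproven; you point to the Perron-component/bottleneck-matrix analysis of the grounded Laplacian but do not carry it out. (That analysis runs: the restriction $Y$ of $X$ to a branch $C$ at $T$ satisfies $L_CY=\mu Y$ for the grounded Laplacian $L_C$, and one must show $\mu=\lambda_{\min}(L_C)$ so that $Y$ is a Perron vector, hence of one strict sign.) So as a self-contained argument the proposal has a genuine gap precisely in Case~2; as a reduction-plus-citation it coincides with the paper, and the cleanest repair is to cite the tree theorem at that point rather than reprove it.
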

\begin{proof}
	Since $H$ is an $e$-tree, $G_H$ is a tree. Also, $\mathbf{U}^{\mathbf{L}}(H)=L(G_H)$. So, the result directly follows from~\cite[Proposition 1.1]{tree}.
\end{proof}
\begin{thm}
	Let $H$ be a non-trivial $e$-tree. Then the multiplicity of any eigenvalue of $H$ cannot exceed the number of elements $S\in I(H)$ with $d^*_H(S)=1$.
\end{thm}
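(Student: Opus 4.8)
The plan is to reduce the statement to a fact about the Laplacian of a tree and then prove that fact by a short induction. Since $H$ is an $e$-tree, its associated graph $G_H$ is a tree, and it was established earlier that $\mathbf{U}^{\mathbf{L}}(H)=L(G_H)$ with $d^*_H(S)$ equal to the degree of the vertex $S$ in $G_H$. Hence the elements $S\in I(H)$ with $d^*_H(S)=1$ are exactly the pendant vertices (leaves) of the tree $G_H$, and the (unified Laplacian) eigenvalues of $H$ are precisely the Laplacian eigenvalues of $G_H$. Writing $T:=G_H$ and letting $p$ be its number of leaves, it therefore suffices to show that every eigenvalue of $L(T)$ has multiplicity at most $p$.

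First I would fix an eigenvalue $\mu$ of $L(T)$ with eigenspace $W$ and consider the linear map $\phi\colon W\to\mathbb{R}^{p}$ sending an eigenvector $x$ to the tuple of its coordinates on the leaves of $T$. If $\phi$ is injective, then $\dim W\le p$, which is exactly the desired bound; so the whole proof rests on the following claim: \emph{if $x$ is an eigenvector of $L(T)$ for $\mu$ (or the zero vector) and $x$ vanishes on every leaf of $T$, then $x=\mathbf{0}$.} I would prove this claim by induction on $|V(T)|$, the base case $|V(T)|=2$ being immediate since both vertices of $K_2$ are leaves.

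For the inductive step, pick a leaf $v$ with unique neighbor $u$. Reading the eigenvalue equation $(L(T)x)_v=\mu x_v$ at $v$ gives $x_v-x_u=\mu x_v$, hence $x_u=(1-\mu)x_v=0$ because $x_v=0$; so $x$ vanishes at $u$ as well. Now set $T':=T-v$, again a tree, and restrict $x$ to $V(T')$. For every $w\in V(T')$ with $w\ne u$ the neighborhood of $w$ is unchanged, so $(L(T')x)_w=(L(T)x)_w=\mu x_w$; and at $u$ a direct computation using $d_{T'}(u)=d_T(u)-1$, $x_v=0$ and $x_u=0$ gives $(L(T')x)_u=(\mu-1)x_u=0=\mu x_u$. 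Thus the restriction of $x$ is an eigenvector of $L(T')$ for $\mu$ (or zero). Moreover every leaf of $T'$ is either a leaf of $T$ (where $x$ vanishes by hypothesis) or is $u$ (where $x$ vanishes by the computation above), so the restricted vector again vanishes on all leaves of $T'$. The induction hypothesis forces it to be zero, and since $x_v=0$ we conclude $x=\mathbf{0}$, proving the claim and hence the injectivity of $\phi$.

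The only delicate point, and the step I expect to require the most care, is the verification that restricting an eigenvector to $T-v$ again yields an eigenvector for the same eigenvalue: deleting the leaf $v$ lowers the diagonal entry of $L$ at its neighbor $u$ by one, so a priori the equation at $u$ could be spoiled, and it is exactly the identity $x_u=0$ (coming from the leaf equation) that rescues it. Once this is in place, $\dim W\le p$ follows, and translating back through $\mathbf{U}^{\mathbf{L}}(H)=L(G_H)$ gives the theorem. Alternatively, since $\mathbf{U}^{\mathbf{L}}(H)=L(G_H)$ with $G_H$ a tree, one could simply invoke a known bound on Laplacian eigenvalue multiplicities of trees in terms of their number of pendant vertices.
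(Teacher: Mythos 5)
Your proof is correct. The reduction you perform is the same one the paper uses: since $H$ is an $e$-tree, $G_H$ is a tree, $\mathbf{U}^{\mathbf{L}}(H)=L(G_H)$, and the elements $S\in I(H)$ with $d^*_H(S)=1$ are exactly the pendant vertices of $G_H$. Where you genuinely diverge is in what happens after the reduction: the paper disposes of the tree statement by citing it as a known result (an exercise in Cvetkovi\'c--Rowlinson--Simi\'c), i.e.\ it takes exactly the route you mention in your final sentence, whereas you prove the tree fact from scratch by showing that the restriction-to-leaves map on each eigenspace is injective, arguing by induction on the number of vertices. Your inductive step is sound, and you correctly identify the one delicate point: deleting a leaf $v$ lowers the diagonal entry of the Laplacian at its neighbour $u$, and it is precisely the identity $x_u=0$ (forced by the eigenvalue equation at the leaf) that makes the restricted vector satisfy the eigenvalue equation for $L(T-v)$. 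Your self-contained argument buys something the citation does not: the same proof works verbatim for \emph{any} symmetric matrix whose off-diagonal support is a tree (adjacency, Laplacian, signless Laplacian), since the only facts used are that edge entries are nonzero and that the eigenvector vanishes at the unique vertex whose diagonal entry is perturbed; so it simultaneously covers the unified matrix and the unified (signless) Laplacian readings of ``eigenvalue of $H$''. The trade-off is length: the paper's proof is three lines, yours carries the full induction.
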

\begin{proof}
	Notice that the set of all elements $S\in I(H)$ with $d^*_H(S)=1$ is nothing but the set of all pendent vertices of $G_H$. It is known that the multiplicity of any eigenvalue of a non-trivial tree $T$ is bounded above by the number of pendent vertices of $T$~(c.f \cite[Excercise 5.2]{cvetko}). So, the result follows.
\end{proof}
\begin{lemma}\label{M-E lem}
	Let $H$ be a simple hypergraph. Then there is a one-to-one correspondence between the set of all exact spanning pairs of $H$, and the set of all spanning subgraphs of $G_H$.
\end{lemma}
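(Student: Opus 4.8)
The plan is to reduce the statement to the associated graph $G_H$, as is done throughout this section. For a simple hypergraph $H$ the graph $G_H$ is simple, has vertex set $I(H)$, and its edges correspond bijectively to the $2$-partitions in $\tau(H)=\bigcup_{e\in E^*(H)}\tau(e)$: each partition $\{S,S'\}$ of an edge of $H$ is exactly the edge $SS'$ of $G_H$. I would first record this partition-edge dictionary explicitly, together with its immediate consequence: any subhypergraph $H'$ with $I(H')=I(H)$ produces the spanning subgraph $G_{H'}$ of $G_H$ whose edge set is $\tau(H')$, and conversely every spanning subgraph $F$ of $G_H$ lifts to a subhypergraph of $H$ with $I(H')=I(H)$ by collecting all hyperedges of $H$ whose $2$-partitions lie in $E(F)$.

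Using this dictionary, I would define the forward map $\Phi$ on an exact spanning pair $(H',D)$ by $\Phi(H',D)=G_{H'}$, the associated graph of $H'$ viewed as a spanning subgraph of $G_H$, and the candidate inverse $\Psi$ sending a spanning subgraph $F$ of $G_H$ to the pair $(H',D)$, where $H'$ is the lift just described and $D\subseteq\tau(H')$ is the partition set singled out by the unique-exact-path condition. I would then check both maps are well defined: $\Phi(H',D)$ is genuinely spanning because $I(H')=I(H)$, and the lift $H'$ occurring in $\Psi$ again satisfies $I(H')=I(H)$ and carries an admissible $D$.

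The two remaining steps are to verify $\Phi\circ\Psi=\mathrm{id}$ and $\Psi\circ\Phi=\mathrm{id}$. For the first, lifting $F$ to $H'$ and passing back to $G_{H'}$ must return $F$, which follows since the edge set of $G_{H'}$ is precisely the union of the partition sets of the chosen hyperedges, namely $E(F)$. For the second, forming $G_{H'}$ from $(H',D)$ and lifting must reproduce the same $H'$, which I would obtain by recovering $H'$ as the edge-support of $G_{H'}$ among the hyperedges of $H$.

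The hardest part will be the bookkeeping attached to the component $D$ of the pair: I must show that the unique-exact-path data $D$ is pinned down by the graph-side object, so that passing through $G_{H'}$ loses no information and the correspondence is genuinely one-to-one. This is where hyperedges of cardinality greater than two demand care, since a single such edge contributes several partitions to $\tau(H')$ and hence several edges of $G_{H'}$. I would handle this by combining the partition-edge dictionary above with the identification $\mathbf{U}^{\mathbf{L}}(H)=L(G_H)$ and the component correspondence of Lemma~\ref{comp}, which together control how the selected partitions $D$ sit inside $G_{H'}$. Establishing this compatibility cleanly, and confirming that every spanning subgraph of $G_H$ is realized exactly once, is the crux of the argument.
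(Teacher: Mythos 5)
Your partition--edge dictionary is sound for a simple hypergraph (edges of $G_H$ correspond bijectively to elements of $\tau(H)$, and a partition $\{S,S'\}\in\tau(H')$ determines the hyperedge $S\cup S'$), but the map you build on top of it does not work. The fatal defect is that your forward map $\Phi(H',D)=G_{H'}$ discards $D$, and $D$ is \emph{not} recoverable from $G_{H'}$ --- the ``pinning down'' you defer to the end is precisely what fails. The paper's own example just before this lemma exhibits two distinct exact spanning pairs $(H,D_1)$ and $(H,D_2)$ with the \emph{same} underlying subhypergraph (namely $H$ itself), hence the same associated graph; your $\Phi$ sends both to $G_H$, so it is not injective. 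Neither the identity $\mathbf{U}^{\mathbf{L}}(H)=L(G_H)$ nor Lemma~\ref{comp} can repair this: those results concern Laplacians and connected components and carry no record of which $2$-partitions were selected. The paper's proof instead builds the graph-side object \emph{from} $D$: it sends $(H',D)$ to the subgraph of $G_H$ traced out by the unique $D$-paths, i.e., essentially the spanning tree with edge set $\{SS'\mid \{S,S'\}\in D\}$, and inverts by lifting a spanning tree $T$ to the subhypergraph of $H$ induced by the hyperedges $S\cup S'$ with $SS'\in E(T)$, paired with $D=\{\{S,S'\}\mid SS'\in E(T)\}$.

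The second gap concerns the target set. The unique-exact-path condition in the definition of an exact spanning pair forces the selected structure to be connected and acyclic, so the objects in correspondence with exact spanning pairs are the spanning \emph{trees} of $G_H$; this is what the paper's proof actually works with (the word ``subgraphs'' in the statement is a slip, as the subsequent Matrix-Tree-type theorem confirms). Taking ``all spanning subgraphs'' at face value breaks your $\Psi$: if $F$ is disconnected, some pairs of elements of $I(H)$ are joined by no $D$-path at all, and if $F$ contains a cycle, uniqueness fails; in either case no admissible $D$ exists, so ``the partition set singled out by the unique-exact-path condition'' denotes nothing. (Moreover, for a spanning subgraph with an isolated non-singleton vertex, your lift does not even satisfy $I(H')=I(H)$.) A cardinality check would have flagged this immediately: $G_H$ has $2^{|\tau(H)|}$ spanning subgraphs, while the count of exact spanning pairs delivered two theorems later is $\frac{1}{k}\nu_1(H)\nu_2(H)\cdots\nu_{k-1}(H)$, the number of spanning trees of $G_H$.
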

\begin{proof}	
	Let $\mathcal{X}$ be the set of all exact spanning pairs $(H',D)$ of $H$, and let $\mathcal{Y}$ be the set of all spanning trees $G$ of $G_H$. 
	
	We show that there is a bijection from $\mathcal{X}$ into $\mathcal{Y}$.
	
	Let $(H',D)\in \mathcal{X}$. Then by the definition of exact spanning pair, given any two elements $S$ and $S'$ of $I(H)$, there exists a unique exact path $(S=)S_0,S_1,\dots, S_{n}(=S')$ in $H'$ with $\{S_{i}, S_{i+1}\}\in D$ for all $i=0,1,\dots,n-1$.
	Corresponding to this we get a unique path joining the vertices $S$ and $S'$ in $G_H$. 
	Let $G$ be the subgraph of $G_H$ induced by these unique paths. Notice that $I(H)=V(G_H)=V(G)$. Therefore, $G\in \mathcal{Y}$. 
	
	Using the above construction, we define a map $f:\mathcal{X}\to \mathcal{Y}$ by $f((H',D))=G$. Clearly, $f$ is well defined.
	
	Now, we shall show that $f$ is $1-1$.
	Let $(H', D')$ and $(H'',D'')\in X$ with $(H', D')\neq(H'',D'')$. Let $G'$ and $G''\in \mathcal{Y}$ be corresponding to $(H',D')$ and $(H'',D'')$ under $f$, respectively. Since $D'\neq D''$, without loss of generality, we assume that $\{S,S'\}\in D'$ but $\{S,S'\}\notin D''$. This implies that $S\cup S'$ is an edge in $G'$ but not an edge in $G''$. Therefore, $G\neq G''$. So $f$ is $1-1$.
	
	Finally, we shall show that $f$ is onto. Let $G\in \mathcal{Y}$. 
	Let $H'$ be the subhypergraph of $H$ induced by all the edges $S\cup S'$, where $SS'\in E(G)$.
	Since $V(G)=I(H)$ and by the definition of spanning tree, corresponding to each unique path in $G$, there exists a unique exact path in $H'$ joining any two elements of $I(H)$. Then clearly, $(H',D')\in X$, where $D'=\{\{S,S'\}~|~SS'\in E(G)\}$. Also, notice that $f((H',D'))=G$. Thus $f$ is onto.
\end{proof}
The Matrix-Tree Theorem for a graph $G$ states that each cofactor of $L(G)$ equals the number of spanning trees of $G$~(c.f.~\cite[Theorem~4.8]{bapat}).
A hypergraph version of the Matrix-Tree Theorem for a graph is given below.
\begin{thm}
Let $H$ be a simple hypergraph. Any cofactor of $\mathbf{U}^{\mathbf{L}}(H)$ is equal to the number of exact spanning pairs of $H$.
\end{thm}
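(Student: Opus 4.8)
The plan is to reduce the statement entirely to the graph case and then transport the count back to the hypergraph via the bijection of Lemma~\ref{M-E lem}. First I would recall that since $H$ is simple we have already observed $\mathbf{U}^{\mathbf{L}}(H)=L(G_H)$, so the cofactors of $\mathbf{U}^{\mathbf{L}}(H)$ coincide exactly with the cofactors of the Laplacian matrix $L(G_H)$ of the associated graph. Moreover, by Lemma~\ref{Ob}$(iv)$ all cofactors of $\mathbf{U}^{\mathbf{L}}(H)$ are equal, so the phrase ``any cofactor'' is unambiguous and it suffices to evaluate a single one.

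Next I would invoke the Matrix-Tree Theorem for graphs \cite[Theorem~4.8]{bapat}, which asserts that every cofactor of $L(G_H)$ equals the number of spanning trees of $G_H$. Chaining this with the previous identification yields that any cofactor of $\mathbf{U}^{\mathbf{L}}(H)$ equals the number of spanning trees of $G_H$. Finally, Lemma~\ref{M-E lem} provides a one-to-one correspondence between the exact spanning pairs of $H$ and the spanning trees of $G_H$, so these two quantities are equal. The composite chain, any cofactor of $\mathbf{U}^{\mathbf{L}}(H)$ equals the number of spanning trees of $G_H$, which in turn equals the number of exact spanning pairs of $H$, then delivers the claim.

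The argument is a short assembly of results already in hand, so there is no genuine analytic difficulty; the whole proof amounts to correctly concatenating three equalities. The only point deserving care is the bookkeeping in the last step: one must count exact spanning \emph{pairs} $(H',D)$ rather than merely the spanning subhypergraphs $H'$, since distinct admissible sets $D$ can give rise to distinct spanning trees of $G_H$. This is precisely the content of the injectivity argument in Lemma~\ref{M-E lem}, which separates two pairs via an element $\{S,S'\}\in D$ lying in exactly one of them and the corresponding edge $S\cup S'$ of $G_H$. Once this correspondence is granted at the level of the data $(H',D)$, the counts match and the proof is complete.
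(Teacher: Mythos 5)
Your proposal is correct and follows essentially the same route as the paper's proof: identify $\mathbf{U}^{\mathbf{L}}(H)$ with $L(G_H)$, use Lemma~\ref{Ob}$(iv)$ to justify speaking of a single cofactor, apply the Matrix-Tree Theorem to $G_H$, and transport the spanning-tree count back through the bijection of Lemma~\ref{M-E lem}. Your extra remark about counting pairs $(H',D)$ rather than underlying subhypergraphs is a useful clarification of the bookkeeping, but it is already implicit in the injectivity argument of that lemma, so the two proofs coincide in substance.
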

\begin{proof}
Since $H$ is simple, $G_H$ is also simple and vice versa.
	By Lemma~\ref{Ob}$(iv)$, all the cofactors of $\mathbf{U}^{\mathbf{L}}(H)$ are equal.  Thus the result follows from Lemma~\ref{M-E lem}, the fact $\mathbf{U}^{\mathbf{L}}(H)=L(G_H)$ and by applying the Matrix-Tree Theorem to $G_H$.
\end{proof}
We refer to the theorem stated above as the Matrix-Exact Spanning Subhypergraph Theorem for hypergraphs. When applied to graphs, it coincides with the Matrix-Tree Theorem.
\begin{thm}
	If $H$ is a simple hypergraph, then the number exact spanning pairs of $H$ equals $\frac{1}{k}\nu_1(H)\nu_2(H)\cdots\nu_{k-1}(H)$.
\end{thm}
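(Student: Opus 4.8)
The plan is to reduce the statement to the classical eigenvalue formula for the number of spanning trees of a graph, exploiting the two facts already established in this section: that $\mathbf{U}^{\mathbf{L}}(H)=L(G_H)$, and that the number of exact spanning pairs of $H$ equals the number of spanning trees of $G_H$ (the latter via Lemma~\ref{M-E lem} together with the Matrix-Exact Spanning Subhypergraph Theorem just proved). Everything else is a bookkeeping chain.

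First I would fix the setup. Since $H$ is simple, $G_H$ is a simple graph on $|I(H)|=k$ vertices, and because $\mathbf{U}^{\mathbf{L}}(H)=L(G_H)$ the two matrices share the same spectrum; hence $\nu_1(H)\geq\cdots\geq\nu_k(H)$ are exactly the Laplacian eigenvalues of $G_H$, with $\nu_k(H)=0$ by Lemma~\ref{Ob}. By the Matrix-Exact Spanning Subhypergraph Theorem, every cofactor of $\mathbf{U}^{\mathbf{L}}(H)=L(G_H)$ equals the number of exact spanning pairs of $H$, which I denote by $t$; in particular all $k$ principal cofactors of $\mathbf{U}^{\mathbf{L}}(H)$ are equal to $t$.

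Next I would extract the product $\nu_1(H)\cdots\nu_{k-1}(H)$ from the characteristic polynomial. Writing $P_{\mathbf{U}^{\mathbf{L}}(H)}(x)=\prod_{i=1}^{k}\left(x-\nu_i(H)\right)=x\prod_{i=1}^{k-1}\left(x-\nu_i(H)\right)$, where the second equality uses $\nu_k(H)=0$, the coefficient of $x$ equals the constant term of $\prod_{i=1}^{k-1}\left(x-\nu_i(H)\right)$, namely $(-1)^{k-1}\prod_{i=1}^{k-1}\nu_i(H)$. On the other hand, for any $k\times k$ matrix $M$ the coefficient of $x$ in $P_M(x)$ is $(-1)^{k-1}$ times the sum of the principal $(k-1)\times(k-1)$ minors of $M$; for $M=\mathbf{U}^{\mathbf{L}}(H)$ each such principal minor is precisely a principal cofactor and hence equals $t$. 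Equating the two expressions for the coefficient of $x$ yields $k\,t=\prod_{i=1}^{k-1}\nu_i(H)$, that is, $t=\frac{1}{k}\nu_1(H)\cdots\nu_{k-1}(H)$.

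There is no genuine obstacle here; the only point meriting a remark is the disconnected case, and it resolves itself. If $H$ is not deeply connected then $G_H$ is disconnected, so $\nu_{k-1}(H)=0$ by Theorem~\ref{econst}, making the right-hand side $0$, which correctly records that a disconnected graph has no spanning tree and hence $H$ has no exact spanning pair. Alternatively, one may skip the minor computation entirely and simply invoke the standard identity $t(G)=\frac{1}{n}\mu_1\cdots\mu_{n-1}$ for the Laplacian eigenvalues $\mu_i$ of a graph on $n$ vertices, applied to $G=G_H$; the computation above is exactly the self-contained derivation of that identity, using the cofactor equality of Lemma~\ref{Ob}(iv).
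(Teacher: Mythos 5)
Your proposal is correct and follows essentially the same route as the paper: both reduce the statement to $G_H$ via the identity $\mathbf{U}^{\mathbf{L}}(H)=L(G_H)$ and the bijection of Lemma~\ref{M-E lem}, after which the paper simply cites the classical eigenvalue formula for the number of spanning trees (Bapat's Theorem~4.11), which is exactly the ``standard identity'' you mention as the alternative at the end. The only difference is that your main text re-derives that identity from the cofactor theorem via the coefficient of $x$ in the characteristic polynomial --- a correct and self-contained elaboration, but not a different method.
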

\begin{proof}
	If $H$ is simple, then the eigenvalues of $\mathbf{U}^{\mathbf{L}}(H)$ and $L(G_H)$ are the same. 	
	So, the result follows from Lemma~\ref{M-E lem} and from \cite[Theorem~4.11]{bapat}:``Let $G$ be a simple graph on $n$ vertices. Let the eigenvalues of $L(G)$ be $\nu_1(G)\geq\nu_2(G)\geq\dots\geq\nu_{n}(G)=0$. Then the number of spanning trees of $G$ equals $\frac{1}{n}\nu_1(G)\nu_2(G)\cdots\nu_{n-1}(G)$".
\end{proof}


\section{Unified signless Laplacian matrix of a hypergraph}\label{Lsec5}
 In this section, we introduce the unified signless Laplacian matrix of a hypergraph and study the interplay between the properties of this matrix with the properties of the hypergraph.
\begin{defn}\normalfont
	The \textit{unified signless Laplacian matrix} of a hypergraph $H$, denoted by $\mathbf{U}^{\mathbf{Q}}(H)$, is defined as $\mathbf{U}^{\mathbf{D}}(H)+\mathbf{U}(H)$.
\end{defn}
Let $H$ be a hypergraph with $e$-index $k$. Since $\mathbf{U}^{\mathbf{Q}}(H)$ is a real symmetric matrix, its eigenvalues are all real. We denote them by $\xi_1(H), \xi_2(H),\dots, \xi_k(H)$ and we shall assume that $\xi_1(H)\geq\xi_2(H)\geq\dots\geq\xi_{k}(H)$, where $k$ is the $e$-index of $H$.
The characteristic polynomial of $\mathbf{U}^{\mathbf{Q}}(H)$ is said to be the \emph{unified signless Laplacian characteristic polynomial of $H$}. An eigenvalue of $\mathbf{U}^{\mathbf{Q}}(H)$ is said to be a \emph{unified signless Laplacian eigenvalue of $H$} and the spectrum of $\mathbf{U}^{\mathbf{Q}}(H)$ is said to be the \emph{unified signless Laplacian spectrum of $H$}, or simply the \textit{$\mathbf{U}^{\mathbf{Q}}$-spectrum} of $H$.

\begin{defn}\normalfont
	Let $H$ be a simple hypergraph. The \textit{edge parts incidence matrix} of $H$, denoted by $\mathbf{B}(H)$, is the $0-1$ matrix of order $k\times |\tau(H)|$ whose rows and columns are indexed by the elements of $I(H)$ and $\tau(H)$, respectively. The $(S_i, T_j)-$th entry of $\mathbf{B}(H)$ is $1$ if and only if $S_i\in T_j$. 
\end{defn}

Notice that for a loopless hypergraph $H$,
$\mathbf{U}^{\mathbf{Q}}(H)=Q(G_H)$. If $H$ is simple, then $\mathbf{B}(H)=B(G_H)$. Since $Q(G_H)=B(G_H)B(G_H)^T$, it follows that $\mathbf{U}^{\mathbf{Q}}(H)=\mathbf{B}(H)\mathbf{B}(H)^T$.

Also, if $H$ is a loopless graph, then $\mathbf{U}^{\mathbf{Q}}(H)=Q(H)$, and so the eigenvalues of $Q(H)$ and $\mathbf{U}^{\mathbf{Q}}(H)$ are the same. Thereby, we denote the eigenvalues of these two matrices commonly as $\xi_i(H)$, $i=1,2,\dots,k$. These reveals that the unified signless Laplacian matrix of a hypergraph is a natural generalization of the signless Laplacian matrix of a loopless graph.

The following are some simple observations on the unified signless Laplacian matrix of a hypergraph.
\begin{lemma}\label{lem}
	Let $H$ be a hypergraph with $e$-index $k$. Then, we have the following.
	\begin{itemize}
		\item[(i)] If $H$ is simple, then $\mathbf{U}^{\mathbf{Q}}(H)$ is positive semi-definite;
		\item[(ii)] $\underset{i=1}{\overset{k}{\sum}}\xi_i(H)=vol(H)+\underset{\{v\}\in E(H)}{\sum}m(\{v\})=2\left(\underset{e\in E^*(H)}{\sum}m(e)|\tau(e)|+\underset{\{v\}\in E(H)}{\sum}m(\{v\})\right)$.
	\end{itemize}
	\begin{proof}
		\begin{itemize}
			\item[(i)] Since $H$ is simple, $\mathbf{U}^{\mathbf{Q}}(H)=Q(G_H)$. The result follows from the fact that $Q(G_H)$ is positive semi-definite.
			\item[(ii)] First equality follows from the fact that 
			\begin{align}\label{L eq}
			\underset{i=1}{\overset{k}{\sum}}\xi_i(H)=tr(\mathbf{U}^{\mathbf{Q}}(H))=vol(H)+\underset{\{v\}\in E(H)}{\sum}m(\{v\}).
			\end{align} Second equality follows by substituting the expression for $vol(H)$ mentioned in Lemma~\ref{Ob}(v) in~\eqref{L eq}.
		\end{itemize}
	\end{proof}
\end{lemma}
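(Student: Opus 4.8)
The plan is to treat the two parts separately, leaning on the matrix identities recorded just before the statement. For part~(i), I would invoke the factorization established above: since $H$ is simple we have $\mathbf{B}(H)=B(G_H)$ and hence $\mathbf{U}^{\mathbf{Q}}(H)=\mathbf{B}(H)\mathbf{B}(H)^{T}$. Any real matrix of the form $MM^{T}$ is a Gram matrix and is therefore positive semi-definite, because $x^{T}MM^{T}x=\lVert M^{T}x\rVert^{2}\geq 0$ for every $x$. Equivalently, one may cite the standard fact that the signless Laplacian $Q(G_H)$ of a simple graph is positive semi-definite together with $\mathbf{U}^{\mathbf{Q}}(H)=Q(G_H)$. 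Either route disposes of part~(i) in one line.

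For the first equality in part~(ii), I would compute the trace. Since $\sum_{i=1}^{k}\xi_i(H)=tr(\mathbf{U}^{\mathbf{Q}}(H))$ and $\mathbf{U}^{\mathbf{Q}}(H)=\mathbf{U}^{\mathbf{D}}(H)+\mathbf{U}(H)$, linearity of the trace gives $tr(\mathbf{U}^{\mathbf{Q}}(H))=tr(\mathbf{U}^{\mathbf{D}}(H))+tr(\mathbf{U}(H))$. By the definition of the modified unified degree matrix, $tr(\mathbf{U}^{\mathbf{D}}(H))=\sum_{S\in I(H)}d^*_H(S)=vol(H)$. For the second trace I would read off the diagonal of the unified matrix: the only nonzero diagonal entries occur at singletons $\{v\}$ that are loops of $H$, where the entry is $m(\{v\})$; the neighbor case $S\overset{c}{\sim}S'$ forces distinct parts $S\neq S'$ and so never contributes on the diagonal. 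Hence $tr(\mathbf{U}(H))=\sum_{\{v\}\in E(H)}m(\{v\})$, which yields the first equality.

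The second equality is then pure substitution. From Lemma~\ref{Ob}$(v)$ we have $vol(H)-\sum_{\{v\}\in E^{*}(H)}m(\{v\})=2\sum_{e\in E^{*}(H)}m(e)|\tau(e)|$, so $vol(H)=2\sum_{e\in E^{*}(H)}m(e)|\tau(e)|+\sum_{\{v\}\in E^{*}(H)}m(\{v\})$. Feeding this into the first equality and noting that the loop sum $\sum_{\{v\}\in E^{*}(H)}m(\{v\})$ coincides with $\sum_{\{v\}\in E(H)}m(\{v\})$, the two identical loop sums combine into a factor of $2$, giving $vol(H)+\sum_{\{v\}\in E(H)}m(\{v\})=2\left(\sum_{e\in E^{*}(H)}m(e)|\tau(e)|+\sum_{\{v\}\in E(H)}m(\{v\})\right)$.

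The computation is essentially bookkeeping, so I do not expect a deep obstacle; the one place to be careful is the loop terms, namely verifying that the diagonal of $\mathbf{U}(H)$ contributes exactly $\sum_{\{v\}\in E(H)}m(\{v\})$ and that this matches the loop correction already present in Lemma~\ref{Ob}$(v)$, so that the two loop sums merge cleanly into the factor of $2$. A secondary point worth checking is that the modified-degree correction (the $-m(S)$ for non-loop included edges) is already absorbed into $vol(H)$, so it introduces no extra terms into $tr(\mathbf{U}^{\mathbf{D}}(H))$.
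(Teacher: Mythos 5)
Your proposal is correct and takes essentially the same route as the paper: part~(i) via $\mathbf{U}^{\mathbf{Q}}(H)=Q(G_H)$ (or, equivalently, the Gram factorization $\mathbf{U}^{\mathbf{Q}}(H)=\mathbf{B}(H)\mathbf{B}(H)^{T}$ recorded just before the lemma), and part~(ii) by identifying the eigenvalue sum with $tr(\mathbf{U}^{\mathbf{Q}}(H))$ and then substituting the expression for $vol(H)$ from Lemma~\ref{Ob}(v). The only difference is that you spell out the diagonal bookkeeping — $tr(\mathbf{U}^{\mathbf{D}}(H))=vol(H)$ and $tr(\mathbf{U}(H))=\sum_{\{v\}\in E^{*}(H)}m(\{v\})$ — which the paper states without detail.
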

\begin{thm}
	Let $H$ be a simple hypergraph having no odd exact cycle. Then, $\mathbf{B}(H)$ is totally unimodular.
\end{thm}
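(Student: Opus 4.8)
The plan is to exploit the identity $\mathbf{B}(H)=B(G_H)$ recorded just above the statement, which reduces the claim to a statement about the associated graph $G_H$: I would show that $\mathbf{B}(H)$ is totally unimodular by proving that $B(G_H)$, the ordinary vertex--edge incidence matrix of $G_H$, is totally unimodular, and for this it suffices to prove that $G_H$ is bipartite. Since $H$ is simple, $G_H$ is a simple graph (if $\{S,S'\}$ partitioned two hyperedges $e,e'$ then $e=S\cup S'=e'$, so all neighbor multiplicities are $1$ and no part is a neighbor of itself), and hence each column of $B(G_H)$ carries exactly two $1$'s. The argument therefore splits into two independent parts: (a) translating the hypothesis ``no odd exact cycle'' into ``$G_H$ is bipartite'', and (b) the classical fact that the incidence matrix of a bipartite graph is totally unimodular.

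For part (a) I would argue by contraposition. Suppose $G_H$ is not bipartite; then it contains an odd cycle, and I would fix one of \emph{minimum} length, say $S_0,S_1,\dots,S_{n-1},S_0$ with $n$ odd and the $S_i$ pairwise distinct. Reading each adjacency $S_{i-1}\sim S_i$ back as a $2$-partition $\{S_{i-1},S_i\}\in\tau(e_i)$ of the hyperedge $e_i:=S_{i-1}\cup S_i$ produces an exact walk in $H$ whose initial and terminal parts coincide (both equal $S_0$) and whose internal parts $S_1,\dots,S_{n-1}$ are distinct; its length $n$ is odd. The one point needing care, and the main obstacle of this step, is the requirement in the definition of an exact cycle that it have \emph{at least three distinct edges}. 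Here I would observe that the unique $2$-partition of a hyperedge $e$ containing a fixed part $S_i$ is $\{S_i,e\setminus S_i\}$; hence if two consecutive edges agreed, $e_i=e_{i+1}$, we would get $S_{i-1}=e_i\setminus S_i=S_{i+1}$, contradicting distinctness of the cycle vertices. Thus consecutive hyperedges are always distinct, which already forces three distinct edges when $n=3$; and for $n\ge5$, if only two distinct hyperedges occurred they would have to alternate around the cycle, which is impossible for odd $n$. So the sequence is a genuine odd exact cycle of $H$, contradicting the hypothesis; therefore $G_H$ is bipartite.

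For part (b) I would run the standard induction on the order $t$ of a square submatrix $M$ of $B(G_H)$. The base case $t=1$ is immediate. For the inductive step: if some column of $M$ is zero then $\det M=0$; if some column has a single $1$, I expand along it and invoke the inductive hypothesis on the $(t-1)\times(t-1)$ minor; and if every column of $M$ has two $1$'s, I use bipartiteness of $G_H$ with its vertex bipartition $V(G_H)=V_1\cup V_2$: each selected column meets one row indexed by $V_1$ and one by $V_2$, so the sum of the $V_1$-rows of $M$ equals the sum of its $V_2$-rows, the rows of $M$ are linearly dependent, and $\det M=0$. In every case $\det M\in\{0,\pm1\}$, so $B(G_H)=\mathbf{B}(H)$ is totally unimodular, completing the proof. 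I expect the only nontrivial issue to be the bookkeeping in part (a) verifying the ``three distinct edges'' clause; part (b) is routine once bipartiteness is in hand, and could alternatively be quoted as the well-known characterization that a graph's incidence matrix is totally unimodular precisely when the graph is bipartite.
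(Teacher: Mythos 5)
Your proof follows the same route as the paper's: identify $\mathbf{B}(H)$ with $B(G_H)$, translate the absence of odd exact cycles in $H$ into bipartiteness of $G_H$, and conclude by total unimodularity of the incidence matrix of a bipartite graph. The only difference is one of detail --- you prove in full the two ingredients the paper merely asserts or cites (the bipartiteness translation, including the ``at least three distinct edges'' check, and the classical incidence-matrix fact, which the paper quotes from Bapat) --- and both of your arguments are correct.
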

\begin{proof}
	Since $H$ has no odd exact cycle, it follows that $G_H$ has no odd cycle and hence $G_H$ is bipartite. Since, $\mathbf{B}(H)$ and $B(G_H)$ have the same spectrum, the proof follows from~\cite[Lemma~2.19]{bapat}: ``If $G$ is a bipartite graph, then $B(G)$ is totally unimodular".
\end{proof}
\begin{thm}
	Let $H$ be a hypergraph with $e$-index $k$. Let $e\in E(H)$ be a non-loop edge such that $I(H-e^r)=I(H)$, where $0<r\leq m(e)$. Then the following hold:\\
	If $|e|=2$, then
	\begin{center} $0\leq\xi_{k}(H-e^r)\leq\xi_{k}(H)\leq \xi_{k-1}(H-e^r)\leq \xi_{k-1}(H)\leq\dots\leq\xi_2(H)\leq\xi_{1}(H-e^r)\leq \xi_{1}(H)$;
	\end{center} otherwise,
	\begin{center} $0\leq\xi_{k}(H-e^r)\leq\xi_{k}(H)\leq \xi_{k-1}(H-e^r)+2r\leq \xi_{k-1}(H)+2r\leq\xi_{k-2}(H-e^r)+4r\leq \xi_{k-2}(H)+4r\leq\dots\leq\xi_2(H)+(k-2)2r\leq\xi_{1}(H-e^r)+(k-1)2r\leq \xi_{1}(H)+(k-1)2r$.
	\end{center}
\end{thm}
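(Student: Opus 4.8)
The plan is to transcribe the perturbation argument from the proof of Theorem~\ref{L thm} almost word for word, replacing $\mathbf{U}^{\mathbf{L}}$ by $\mathbf{U}^{\mathbf{Q}}$ throughout. First I would reorder the elements of $I(H)$ so that the $t=2|\tau(e)|$ parts coming from the $2$-partitions of $e$ occupy consecutive index pairs, exactly as in that proof. Deleting $r$ copies of $e$ lowers the modified unified degree $d^*_H(S)$ of every part $S$ of $e$ by $r$ and lowers the neighbor multiplicity of every pair $\{S_i,S_j\}\in\tau(e)$ by $r$; since $\mathbf{U}^{\mathbf{Q}}$ adds the off-diagonal contribution rather than subtracting it, this gives
\begin{align*}
\mathbf{U}^{\mathbf{Q}}(H-e^r)=\mathbf{U}^{\mathbf{Q}}(H)-M,
\end{align*}
where $M$ is block diagonal with one $2\times 2$ block $\bigl(\begin{smallmatrix} r & r \\ r & r \end{smallmatrix}\bigr)$ for each element of $\tau(e)$ (the proper non-empty subsets of $e$ pair up as $S\leftrightarrow e\setminus S$, so the blocks do not overlap) and zeros elsewhere.

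The crucial observation is that, although $M$ differs from the matrix $N$ of Theorem~\ref{L thm} in the signs of its off-diagonal entries, each block $\bigl(\begin{smallmatrix} r & r \\ r & r \end{smallmatrix}\bigr)$ has exactly the same eigenvalues $0$ and $2r$ as the Laplacian block $\bigl(\begin{smallmatrix} r & -r \\ -r & r \end{smallmatrix}\bigr)$. Hence $M$ has eigenvalue $0$ with multiplicity $k-\tfrac{t}{2}$ and eigenvalue $2r$ with multiplicity $\tfrac{t}{2}$, and its second largest eigenvalue satisfies $\lambda_2(M)=0$ when $|e|=2$ (so $|\tau(e)|=1$) and $\lambda_2(M)=2r$ otherwise, exactly as in~\eqref{lambda2}. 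Consequently the entire spectral bookkeeping of the earlier proof carries over unchanged.

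I would then apply the Weyl-type inequalities~\eqref{e1} and~\eqref{e2} with $A=\mathbf{U}^{\mathbf{Q}}(H-e^r)$ and $B=M$, so that $A+B=\mathbf{U}^{\mathbf{Q}}(H)$. Taking $i=j$ in the lower inequality and using $\lambda_k(M)=0$ yields the monotonicity $\xi_i(H)\geq\xi_i(H-e^r)$ for every $i$; taking $i=j+1$ in the upper inequality and substituting $\lambda_2(M)$ yields $\xi_{j+1}(H)\leq\xi_j(H-e^r)$ when $|e|=2$ and $\xi_{j+1}(H)\leq\xi_j(H-e^r)+2r$ otherwise. Interleaving these two families of inequalities, with the shift $2r$ accumulated once more at each successive step, assembles the complete chain in both cases. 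At the bottom, the value $0$ is furnished by the positive semi-definiteness of $\mathbf{U}^{\mathbf{Q}}(H-e^r)$ from Lemma~\ref{lem}(i), giving $\xi_k(H-e^r)\geq 0$, while $\xi_k(H-e^r)\leq\xi_k(H)$ is simply the $i=k$ case of the monotonicity already established.

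The one genuine point of departure from Theorem~\ref{L thm} occurs precisely at this bottom end. There the unified Laplacian has $0$ as its least eigenvalue automatically, by the vanishing row sums of Lemma~\ref{Ob}(i); here the least unified signless Laplacian eigenvalue need not vanish, so the chain must open with the inequality $0\leq\xi_k(H-e^r)$ in place of an equality. This is the only step where positive semi-definiteness is genuinely needed, and it is the only place where care is required — every other line is a verbatim translation of the Laplacian argument.
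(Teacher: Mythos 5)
Your proposal is correct and takes essentially the same approach as the paper: the paper's own proof is the single line ``Proof is similar to that of Theorem~\ref{L thm},'' and your write-up supplies exactly the details that similarity requires --- the identity $\mathbf{U}^{\mathbf{Q}}(H-e^r)=\mathbf{U}^{\mathbf{Q}}(H)-M$ with block-diagonal $M$ built from blocks $\bigl(\begin{smallmatrix} r & r \\ r & r \end{smallmatrix}\bigr)$, the observation that these blocks have the same spectrum $\{0,2r\}$ as the Laplacian blocks $\bigl(\begin{smallmatrix} r & -r \\ -r & r \end{smallmatrix}\bigr)$, and the identical Weyl-inequality bookkeeping. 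Your identification of the one real point of departure --- replacing the equality $0=\nu_k$ (from zero row sums) by the inequality $0\leq\xi_k$ (from positive semi-definiteness) --- is also precisely right.
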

\begin{proof}
	Proof is similar to that of Theorem~\ref{L thm}. 
\end{proof}
\begin{thm}
	Let $H$ be a simple hypergraph having no odd exact cycle, then the characteristic polynomial of $\mathbf{U}^{\mathbf{Q}}(H)$ coincides with the
	characteristic polynomial of $\mathbf{U}^{\mathbf{L}}(H)$.
	\end{thm}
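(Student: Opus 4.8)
The plan is to reduce the claim to the classical fact that the Laplacian and signless Laplacian matrices of a bipartite graph are cospectral, and then transfer this back to the hypergraph matrices through the associated graph $G_H$. First I would invoke the identities recorded earlier in this section: since $H$ is simple, $\mathbf{U}^{\mathbf{L}}(H)=L(G_H)$ and $\mathbf{U}^{\mathbf{Q}}(H)=Q(G_H)$. Thus the desired equality $P_{\mathbf{U}^{\mathbf{Q}}(H)}(x)=P_{\mathbf{U}^{\mathbf{L}}(H)}(x)$ will follow once I show that $L(G_H)$ and $Q(G_H)$ share a characteristic polynomial.

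Next I would use the hypothesis that $H$ has no odd exact cycle. As already observed in the proof of the preceding theorem on total unimodularity, this forces $G_H$ to contain no odd cycle, so $G_H$ is bipartite; let $\{X,Y\}$ be a bipartition of $V(G_H)=I(H)$.

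The key step is to exhibit an explicit similarity between $\mathbf{U}^{\mathbf{L}}(H)$ and $\mathbf{U}^{\mathbf{Q}}(H)$. I would define the diagonal signature matrix $D$ indexed by $I(H)$ with $D_{SS}=+1$ if $S\in X$ and $D_{SS}=-1$ if $S\in Y$; since $D^2$ is the identity, $D^{-1}=D$. Writing $A=\mathbf{U}(H)=A(G_H)$, every nonzero off-diagonal entry $A_{SS'}$ corresponds to an edge of $G_H$ whose endpoints lie in different parts of the bipartition, so $(DAD)_{SS'}=D_{SS}A_{SS'}D_{S'S'}=-A_{SS'}$, giving $DAD=-A$. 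Because the modified unified degree matrix $\mathbf{U}^{\mathbf{D}}(H)$ is diagonal, it commutes with $D$ and satisfies $D\,\mathbf{U}^{\mathbf{D}}(H)\,D=\mathbf{U}^{\mathbf{D}}(H)$. Combining these,
\[
D\,\mathbf{U}^{\mathbf{L}}(H)\,D=D\bigl(\mathbf{U}^{\mathbf{D}}(H)-A\bigr)D=\mathbf{U}^{\mathbf{D}}(H)+A=\mathbf{U}^{\mathbf{Q}}(H).
\]
Hence $\mathbf{U}^{\mathbf{L}}(H)$ and $\mathbf{U}^{\mathbf{Q}}(H)$ are similar via $D$, so their characteristic polynomials coincide, which is the assertion.

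I do not anticipate a serious obstacle: the only point requiring care is the well-definedness of the bipartition, i.e., that the absence of odd exact cycles really does yield an odd-cycle-free, hence bipartite, $G_H$, and this is already available from the earlier argument. Alternatively, the whole conclusion follows at once by citing the standard spectral fact that $L(G)$ and $Q(G)$ have the same characteristic polynomial whenever $G$ is bipartite, applied to $G=G_H$.
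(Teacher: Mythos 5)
Your proposal is correct and follows essentially the same route as the paper: since $H$ is simple, $\mathbf{U}^{\mathbf{L}}(H)=L(G_H)$ and $\mathbf{U}^{\mathbf{Q}}(H)=Q(G_H)$, the absence of odd exact cycles makes $G_H$ bipartite, and the conclusion is exactly the bipartite $L$/$Q$ cospectrality, which the paper simply cites from Cvetkovi\'{c}--Rowlinson--Simi\'{c}. The only difference is that you prove that classical fact inline via the signature similarity $D\,\mathbf{U}^{\mathbf{L}}(H)\,D=\mathbf{U}^{\mathbf{Q}}(H)$, making the argument self-contained; this step is valid because simplicity of $H$ forces $\mathbf{U}(H)$ to have zero diagonal and $G_H$ to be a simple graph, so $DAD=-A$ holds exactly as you claim.
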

 \begin{proof}
 	If $H$ is simple and has no odd exact cycles, $G_H$ is a simple bipartite graph. Since $\mathbf{U}^{\mathbf{Q}}(H)=Q(G_H)$ and $\mathbf{U}^{\mathbf{L}}(H)=L(G_H)$, the proof follows from \cite[Proposition~7.8.4]{cvetko}:“For any simple bipartite graph G, the characteristic polynomial of Q(G)
 	coincides with the characteristic polynomial of L(G)”
 \end{proof}

\begin{thm}\label{thm1}
	Let $H$ be a non-trivial, deeply connected, simple hypergraph with $e$-index $k$. Then, $H$ has no odd exact cycle if and only if $\xi_{k}(H)=0$. Also, in this case, $0$ is a simple unified signless Laplacian eigenvalue. 
\end{thm}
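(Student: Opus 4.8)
The plan is to push the whole statement through to the associated graph $G_H$ and then read off the conclusion from the classical spectral description of bipartiteness by the signless Laplacian. Because $H$ is simple we have $\mathbf{U}^{\mathbf{Q}}(H)=Q(G_H)$, so $\xi_i(H)=\lambda_i(Q(G_H))$ for every $i$, and in particular $\xi_k(H)$ is the least eigenvalue of $Q(G_H)$. Since $H$ is deeply connected, $G_H$ is connected, and $H$ has no odd exact cycle exactly when $G_H$ has no odd cycle, i.e. when $G_H$ is bipartite. Granting this dictionary, the theorem is precisely the assertion that for a connected graph the least signless Laplacian eigenvalue is $0$ iff the graph is bipartite, and that this $0$ is then simple.

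For the graph side I would invoke the factorization already recorded before Lemma~\ref{lem}, namely $\mathbf{U}^{\mathbf{Q}}(H)=\mathbf{B}(H)\mathbf{B}(H)^{T}$. Since each column of $\mathbf{B}(H)$ indexed by $\{S,S'\}\in\tau(H)$ has its two ones in rows $S$ and $S'$, for every $x\in\mathbb{R}^{k}$
\begin{equation*}
x^{T}\mathbf{U}^{\mathbf{Q}}(H)\,x=\bigl\|\mathbf{B}(H)^{T}x\bigr\|^{2}=\sum_{\{S,S'\}\in\tau(H)}\bigl(x_{S}+x_{S'}\bigr)^{2}.
\end{equation*}
Thus $\mathbf{U}^{\mathbf{Q}}(H)$ is positive semidefinite (so $\xi_k(H)\ge 0$, cf. Lemma~\ref{lem}(i)), and $\xi_k(H)=0$ iff some nonzero $x$ satisfies $x_S+x_{S'}=0$ on every edge $\{S,S'\}$ of $G_H$. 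In a connected $G_H$ such an $x$ is determined up to a scalar by its value at one vertex, propagating with alternating sign along edges; global consistency is possible iff $G_H$ has no odd cycle, i.e. iff $G_H$ is bipartite, and in that case the kernel is one-dimensional, which yields the simplicity of the eigenvalue $0$. Alternatively one may simply cite the corresponding fact for connected graphs from \cite{cvetko}.

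The step that genuinely needs care---and that I expect to be the main obstacle---is the equivalence ``$H$ has no odd exact cycle $\iff$ $G_H$ is bipartite.'' The forward direction is immediate from the construction of $G_H$ and was already used in the proof that $\mathbf{B}(H)$ is totally unimodular. For the converse I would show that an odd cycle of $G_H$ lifts to a genuine odd exact cycle of $H$; the delicate point is the requirement of \emph{at least three distinct edges}. The key observation is that the partitions of a single edge $e$ induce, among the parts of $e$, a perfect matching in $G_H$ (each part $S$ is joined only to $e\setminus S$), so the edges of $G_H$ arising from any two fixed hyperedges form a union of two matchings and hence contain only even cycles. Consequently every odd cycle of $G_H$ must use at least three distinct hyperedges, and the corresponding closed exact walk---with distinct internal parts inherited from the cycle---is exactly an odd exact cycle of $H$. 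This closes the dictionary and, combined with the previous paragraph, establishes both assertions.
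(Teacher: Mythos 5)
Your proposal is correct, and its skeleton is the same as the paper's: reduce to the associated graph via $\mathbf{U}^{\mathbf{Q}}(H)=Q(G_H)$, note that deep connectedness and simplicity of $H$ give a connected simple $G_H$, and invoke the classical fact that a connected non-trivial simple graph is bipartite if and only if its least signless Laplacian eigenvalue is $0$, which is then simple (the paper cites \cite[Theorem~7.8.1]{cvetko}; your quadratic-form argument $x^{T}\mathbf{U}^{\mathbf{Q}}(H)x=\sum_{\{S,S'\}\in\tau(H)}(x_S+x_{S'})^{2}$, with sign-propagation along paths, is a correct self-contained substitute). Where you genuinely add something is the dictionary step ``$H$ has no odd exact cycle $\iff$ $G_H$ is bipartite'': the paper asserts this equivalence without proof, and it is not immediate, precisely because an odd cycle of $G_H$ qualifies as an odd exact cycle of $H$ only if it uses at least three distinct hyperedges. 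Your two-matchings observation --- the $G_H$-edges arising from a single hyperedge form a matching, so the edges arising from any two hyperedges form a union of two matchings and therefore carry only even cycles --- settles exactly this point; it is sound here because simplicity of $H$ guarantees that each edge of $G_H$ comes from a unique hyperedge. One small slip: your ``forward''/``converse'' labels are swapped. The direction that is immediate from the construction is ``an odd exact cycle of $H$ projects to an odd closed walk, hence an odd cycle, of $G_H$'' (equivalently, $G_H$ bipartite $\Rightarrow$ no odd exact cycle), whereas the lifting argument you describe proves the other implication --- the one that the paper also uses without proof in its total-unimodularity theorem. Since your two arguments between them cover both implications, this is cosmetic and does not affect the validity of the proof.
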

\begin{proof}
	Since $H$ is a non-trivial, deeply connected, simple hypergraph with $e$-index $k$, it turns out that $G_H$ is a non-trivial, connected, simple graph on $k$ vertices.
	Notice that a $DE$-component of $H$ has no odd exact cycle if and only if all the components of $G_H$ corresponding to this $DE$-component are bipartite.  Since $\mathbf{U}^{\mathbf{Q}}(H)=Q(G_H)$, the result directly follows from~\cite[Theorem~7.8.1]{cvetko}: ``Let $G$ be a non-trivial connected simple graph with $n$ vertices. Then $G$ is bipartite if and only if $\xi_n(G)=0$. In this situation, $0$ is a simple eigenvalue". 
\end{proof}
\begin{cor}
	Let $H$ be a hypergraph. Then the multiplicity of the eigenvalue $0$ of $\mathbf{U}^{\mathbf{Q}}(H)$ is equal to the sum of the number of trivial $DE$-components of $H$ and the number of non-trivial equivalence classes $[S]$ induced by $\rho$ on $I(H)$ such that no $T\in [S]$ is a part of an odd exact cycle in $H$. In particular, if $H$ has no odd exact cycle, then the multiplicity of the eigenvalue $0$ of $\mathbf{U}^{\mathbf{Q}}(H)$ is the number of $DE$-components of $H$ counted with their multiplicities.
\end{cor}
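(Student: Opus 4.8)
The plan is to pass to the associated graph and then argue component by component. Since $H$ is loopless we have $\mathbf{U}^{\mathbf{Q}}(H)=Q(G_H)$, so the multiplicity of $0$ as an eigenvalue of $\mathbf{U}^{\mathbf{Q}}(H)$ equals its multiplicity as an eigenvalue of $Q(G_H)$. After relabelling the rows and columns of $Q(G_H)$ so that vertices lying in the same connected component are grouped together, $Q(G_H)$ becomes block diagonal, with one block $Q(G')$ for each component $G'$ of $G_H$. Hence the algebraic multiplicity of $0$ is additive over the components: it equals $\sum_{G'} m_0(G')$, where $m_0(G')$ denotes the multiplicity of $0$ as an eigenvalue of $Q(G')$ and the sum runs over all components $G'$ of $G_H$.

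Next I would evaluate each summand. If $G'$ is a single isolated vertex, then $Q(G')=[0]$ and $m_0(G')=1$. If $G'$ is non-trivial, then $m_0(G')=1$ when $G'$ is bipartite and $m_0(G')=0$ otherwise, with the zero eigenvalue being simple in the bipartite case; this is exactly the per-component content of Theorem~\ref{thm1} (applied to the $DE$-component of $H$ whose associated graph is $G'$), equivalently the standard fact that the least signless-Laplacian eigenvalue of a non-trivial connected graph is $0$ precisely when the graph is bipartite. Consequently, $m_0(Q(G_H))$ counts the isolated vertices of $G_H$ together with the non-trivial bipartite components of $G_H$.

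It then remains to translate this count back to $H$ via Lemma~\ref{comp}, which gives a bijection between the equivalence classes of $\rho$ on $I(H)$ (equivalently, the $DE$-components of $H$ counted with multiplicity) and the components of $G_H$. A non-singleton element of $I(H)$ always has its edge-complement as a neighbour, so it is never isolated in $G_H$; hence the isolated vertices of $G_H$ are exactly the singleton classes $[S]$ containing a singleton of $I(H)$, i.e.\ the trivial $DE$-components of $H$. For a non-trivial class $[S]$, the corresponding component $G'$ is non-bipartite if and only if it contains an odd cycle; since an odd cycle of $G_H$ is an odd exact cycle of $H$ and an exact cycle, being connected, has all of its parts in a single $\rho$-class, it follows that $G'$ is bipartite if and only if no $T\in[S]$ is a part of an odd exact cycle of $H$. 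Summing the per-component contributions yields the stated formula. For the ``in particular'' assertion, if $H$ has no odd exact cycle then every component of $G_H$ is bipartite (isolated vertices being trivially bipartite), so each contributes $1$, and by Lemma~\ref{comp} the total is the number of components of $G_H$, which is the number of $DE$-components of $H$ counted with their multiplicities.

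The main obstacle I anticipate is the bookkeeping in this final step: correctly matching the two kinds of zero-contributing components (isolated vertices versus non-trivial bipartite components) with the two kinds of terms in the formula (trivial $DE$-components versus non-trivial classes with no odd exact cycle), and verifying rigorously that ``no $T\in[S]$ lies on an odd exact cycle'' is equivalent to bipartiteness of the associated component. This equivalence rests entirely on the fact that each exact cycle is confined to a single $\rho$-class, so I would make that observation explicit before assembling the sum.
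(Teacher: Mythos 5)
Your proposal is correct and takes essentially the same route as the paper's own proof: reduce to $Q(G_H)$ via $\mathbf{U}^{\mathbf{Q}}(H)=Q(G_H)$, apply the per-component bipartiteness criterion of Theorem~\ref{thm1} (equivalently, the standard signless-Laplacian fact), and translate the count back to $H$ through the bijection of Lemma~\ref{comp} between $\rho$-classes and components of $G_H$. If anything, your write-up is slightly more careful than the paper's, since you justify both directions of the equivalence between bipartiteness of a component and the absence of odd exact cycles through its class (via the observation that every exact cycle lies in a single $\rho$-class), and you explicitly identify the isolated vertices of $G_H$ with the trivial $DE$-components, points the paper passes over quickly.
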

\begin{proof}
	If a component of the associated graph $G_H$ of $H$ is non-trivial, then by Theorem~\ref{thm1}, this component has $0$ as its signless Laplacian eigenvalue with multiplicity one if and only if it has no odd exact cycle. i.e., $G_H$ is bipartite. Since the eigenvalues of $\mathbf{U}^{\mathbf{Q}}(H)$ includes all the eigenvalues of each component of $G_H$, it follows that $H$ has $0$ as its signless Laplacian eigenvalue, with the multiplicity equal to the number of components of $G_H$ which are either trivial or bipartite. 
	
	Since for each component of $G_H$, there exists a unique equivalence class induced by $\rho$ on $I(H)$ (c.f.~Lemma~\ref{comp}), let $[S]$ be the equivalence class corresponding to a non-trivial bipartite component $G'$ of $G_H$. Since $G'$ has no odd cycle, it is clear that no $T\in [S]$ is a part of an odd exact cycle in $H$. Therefore, the number of bipartite components of $G_H$ is the same as the number of such equivalence classes induced by $\rho$ on $I(H)$.
	
	Additionally, a $DE$-component of $H$ is trivial if and only if the corresponding component of $G_H$ is trivial. Therefore, the number of  trivial components in $G_H$ is the same as the number of trivial $DE$-components of $H$. 
	
	In particular, no $T\in I(H)$ is a part of an odd exact cycle, as $H$ has no such exact cycles. Thus, the proof follows from the fact that the number of equivalance classes induced by $\rho$ on $I(H)$ is equal to the number of $DE$-components of $H$, counted with their multiplicities (c.f.~Lemma~\ref{comp}).
\end{proof}
\begin{defn}\normalfont
	A hypergraph $H$ is said to be \textit{subset-regular}, if the unified degree of all $S\in I(H)$ are the same.
\end{defn}
It is clear that a subset-regular graph is a regular graph. 

\begin{thm}
	
	Let $H$ be a simple hypergraph with $e$-index $k$ and no included edges. Then, $\xi_1(H)\geq \frac{4}{k}|\tau(H)|$. Equality holds if and only if $H$ is subset-regular. Furthermore, if $H$ is subset-regular, then the degree of any $S\in I(H)$ is $\frac{\xi_1(H)}{2}$, and the number of $DE$-components counted with their multiplicities equals the algebraic multiplicity of $\xi_1(H)$.
\end{thm}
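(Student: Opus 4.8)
The plan is to transfer the entire statement to the associated graph $G_H$ and then invoke standard facts about the signless Laplacian of a simple graph. Since $H$ is simple it is loopless, so $G_H$ is defined and, as recorded earlier, $\mathbf{U}^{\mathbf{Q}}(H)=Q(G_H)$; hence $\xi_1(H)$ is the signless Laplacian spectral radius of $G_H$ and the two matrices share their whole spectrum. The first task is the two dictionary translations that the hypotheses ``simple'' and ``no included edges'' are designed to supply. Because $H$ is simple, every pair $\{S,S'\}\in\tau(H)$ satisfies $S\cup S'\in E(H)$ with multiplicity one and determines $S\cup S'$ uniquely, so distinct $2$-partitions give distinct edges of $G_H$, no multi-edges arise, and $G_H$ is a simple graph; thus $|\tau(H)|=|E(G_H)|$, i.e. $G_H$ has exactly $|\tau(H)|$ edges on $k$ vertices. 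Next, having no included edges forces every $S\in I(H)$ with $|S|>1$ to be a proper subset of some edge but not itself an edge, so $S\notin E(H)$ and therefore $d^*_H(S)=d_H(S)$; the same identity is immediate for singletons. Consequently the degree of $S$ in $G_H$ is $d_H(S)$, and $H$ is subset-regular if and only if $G_H$ is a regular graph.

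With these translations the inequality is a Rayleigh-quotient computation. Writing $m:=|E(G_H)|=|\tau(H)|$ and $\mathbf{1}$ for the all-ones vector of length $k$, one has $\mathbf{1}^{T}Q(G_H)\mathbf{1}=\mathbf{1}^{T}D(G_H)\mathbf{1}+\mathbf{1}^{T}A(G_H)\mathbf{1}=2m+2m=4m$, while $\mathbf{1}^{T}\mathbf{1}=k$; hence
$$\xi_1(H)=\max_{x\neq 0}\frac{x^{T}Q(G_H)x}{x^{T}x}\geq\frac{\mathbf{1}^{T}Q(G_H)\mathbf{1}}{\mathbf{1}^{T}\mathbf{1}}=\frac{4m}{k}=\frac{4}{k}|\tau(H)|.$$
Equality holds precisely when $\mathbf{1}$ is a top eigenvector of $Q(G_H)$, i.e. $Q(G_H)\mathbf{1}=\xi_1(H)\mathbf{1}$. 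Since $Q(G_H)\mathbf{1}=2\mathbf{d}$, where $\mathbf{d}$ is the degree vector of $G_H$, this forces $\mathbf{d}$ to be constant, that is $G_H$ regular, equivalently $H$ subset-regular; the converse direction is checked by noting that an $r$-regular $G_H$ has $Q(G_H)=rI+A(G_H)$ with largest eigenvalue $2r=4m/k$ (cf.~\cite{cvetko}). This last identity also gives the ``furthermore'' part: in the regular case the common degree $r$ equals $\xi_1(H)/2$, and since $d_H(S)=d^*_H(S)=r$ for every $S\in I(H)$, the unified degree of each $S$ is $\xi_1(H)/2$.

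Finally, for the multiplicity, the relation $Q(G_H)=rI+A(G_H)$ shows that the eigenspace of $Q(G_H)$ for $\xi_1(H)=2r$ coincides with the eigenspace of $A(G_H)$ for its largest eigenvalue $r$. Applying Perron--Frobenius componentwise, an $r$-regular graph attains the adjacency eigenvalue $r$ with multiplicity equal to its number of connected components, the all-ones indicator of each component furnishing a basis of that eigenspace. Thus the algebraic multiplicity of $\xi_1(H)$ equals the number of components of $G_H$, which by Lemma~\ref{comp} is exactly the number of $DE$-components of $H$ counted with their multiplicities. The only genuinely hypergraph-theoretic work lies in the two dictionary identities $|\tau(H)|=|E(G_H)|$ and $d^*_H(S)=d_H(S)$; once these are in place the argument is the familiar spectral-graph proof, so I expect the main obstacle to be making sure the ``simple'' and ``no included edges'' hypotheses are used exactly where needed, in particular verifying that no element of $I(H)$ with $|S|>1$ is secretly an edge (which is what equates the modified and unmodified unified degrees and thereby aligns \emph{subset-regular} with regularity of $G_H$).
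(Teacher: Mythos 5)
Your proposal is correct and takes essentially the same approach as the paper: both reduce the statement to the associated graph via $\mathbf{U}^{\mathbf{Q}}(H)=Q(G_H)$, the identity $|\tau(H)|=|E(G_H)|$, and the equivalence (under the no-included-edges hypothesis) of subset-regularity of $H$ with regularity of $G_H$, then conclude by the classical signless Laplacian facts for simple graphs together with Lemma~\ref{comp} for the component count. The only difference is that the paper simply cites \cite[Theorem~7.8.6]{cvetko} for those graph-level facts, whereas you prove them in place via the Rayleigh quotient and Perron--Frobenius; the underlying argument is the same.
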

\begin{proof}
	Since, each edge of $G_H$ corresponds to an element in $\tau(H)$, and vice versa, we have $|E(G_H)|= |\tau(H)|$. Notice that $H$ has no included edges and is subset-regular if and only if $G_H$ is regular. Therefore, the proof directly follows from \cite[Theorem~7.8.6]{cvetko}.
\end{proof}
\begin{thm}
	Let $H$ be a hypergraph with $e$-index $k$. Then,
	$$\xi_k(H)\leq\frac{2}{k}\left(\underset{e\in E^*(H)}{\sum}m(e)|\tau(e)|+\underset{\{v\}\in E(H)}{\sum}m(\{v\})\right)\leq\xi_1(H)$$
\end{thm}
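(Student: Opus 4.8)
The plan is to recognize that the quantity sandwiched between $\xi_k(H)$ and $\xi_1(H)$ is nothing other than the arithmetic mean of the unified signless Laplacian eigenvalues of $H$, after which the two inequalities become the elementary observation that the smallest (respectively largest) of a finite list of real numbers is at most (respectively at least) their average. First I would appeal directly to Lemma~\ref{lem}(ii), which already records that
\begin{equation*}
\sum_{i=1}^{k}\xi_i(H)=2\left(\underset{e\in E^*(H)}{\sum}m(e)|\tau(e)|+\underset{\{v\}\in E(H)}{\sum}m(\{v\})\right).
\end{equation*}
Dividing through by $k$ identifies the middle expression of the statement with $\frac{1}{k}\sum_{i=1}^{k}\xi_i(H)$, i.e. with $\frac{1}{k}\,tr(\mathbf{U}^{\mathbf{Q}}(H))$.

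Next I would use the convention $\xi_1(H)\geq\xi_2(H)\geq\dots\geq\xi_k(H)$ to write down, for each index $i$, the trivial bounds $\xi_k(H)\leq\xi_i(H)\leq\xi_1(H)$. Summing these over $i=1,\dots,k$ and dividing by $k$ yields
\begin{equation*}
\xi_k(H)\leq\frac{1}{k}\sum_{i=1}^{k}\xi_i(H)\leq\xi_1(H).
\end{equation*}
Substituting the identity from the previous paragraph into the middle term then gives precisely the asserted chain of inequalities.

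I do not expect any genuine obstacle here: the entire content of the result is the pairing of the ``min $\leq$ mean $\leq$ max'' principle with the trace formula of Lemma~\ref{lem}(ii), and no hypothesis of simplicity or positive semi-definiteness is needed since the argument only uses that the $\xi_i(H)$ are real (guaranteed because $\mathbf{U}^{\mathbf{Q}}(H)$ is a real symmetric matrix) together with their ordering. The only step requiring a moment's care is the bookkeeping that confirms the factor $\tfrac{2}{k}$ in the statement matches $\tfrac{1}{k}$ times the trace expression from Lemma~\ref{lem}(ii); this is immediate from that lemma's second equality, so the proof reduces to a one-line invocation of a prior result followed by the averaging inequality.
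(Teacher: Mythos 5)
Your proposal is correct and follows exactly the paper's own argument: summing the trivial inequalities $\xi_k(H)\leq\xi_i(H)\leq\xi_1(H)$ over $i=1,\dots,k$ and invoking the trace identity of Lemma~\ref{lem}(ii) to identify the middle quantity as the average eigenvalue. Your additional remark that neither simplicity nor positive semi-definiteness is needed is a correct observation consistent with the theorem's hypotheses.
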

\begin{proof}
	Notice that  $\xi_{k}(H)\leq\xi_{i}(H)\leq\xi_{1}(H)$ for all $i=1,2,\dots,k$. Adding these inequalities for $i=1,2,\dots,k$ and from Lemma~\ref{lem}$(ii)$, the result follows.
\end{proof}
	\begin{thm}
		Let $H$ be a simple hypergraph with $e$-index $k$. Let $I(H)=\{S_1,S_2,\dots,S_k\}$. If $d^*_H(S_1)\geq d^*_H(S_2)\geq\dots\geq d^*_H(S_k)$, then $\underset{i=1}{\overset{t}{\sum}}\xi_i(H)\geq\underset{i=1}{\overset{t}{\sum}}d^*_H(S_i)$ for $t=1,2,\dots,k$. The equality holds when $t=k$.	
\end{thm}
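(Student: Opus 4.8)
The plan is to reduce the statement to a majorization inequality for the signless Laplacian of the associated graph $G_H$. Since $H$ is simple, by the remark following the definition of $\mathbf{U}^{\mathbf{Q}}(H)$ we have $\mathbf{U}^{\mathbf{Q}}(H)=Q(G_H)$, so the numbers $\xi_1(H)\geq\dots\geq\xi_k(H)$ are exactly the signless Laplacian eigenvalues of $G_H$. Moreover, because $H$ is simple it is loopless, so $\{v\}\notin E(H)$ for every vertex $v$ and the off-diagonal term $\mathbf{U}(H)$ contributes nothing to the diagonal; hence the $i$-th diagonal entry of $\mathbf{U}^{\mathbf{Q}}(H)=\mathbf{U}^{\mathbf{D}}(H)+\mathbf{U}(H)$ is precisely $d^*_H(S_i)$, which (as noted earlier in the paper) coincides with the degree of the vertex $S_i$ in $G_H$.

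With this identification, $\mathbf{U}^{\mathbf{Q}}(H)$ is a real symmetric matrix whose diagonal entries, sorted decreasingly, are $d^*_H(S_1)\geq\dots\geq d^*_H(S_k)$ and whose eigenvalues, sorted decreasingly, are $\xi_1(H)\geq\dots\geq\xi_k(H)$. First I would invoke the Schur majorization theorem, which asserts that the eigenvalue vector of a Hermitian matrix majorizes its diagonal vector; in particular, for each $t\in\{1,\dots,k\}$ the partial sums of the sorted eigenvalues dominate those of the sorted diagonal entries, which is exactly $\sum_{i=1}^t\xi_i(H)\geq\sum_{i=1}^t d^*_H(S_i)$. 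Equivalently, since the spectrum of $\mathbf{U}^{\mathbf{Q}}(H)$ equals that of $Q(G_H)$, one may cite the signless Laplacian counterpart of the graph majorization inequality already used for the unified Laplacian (cf.~\cite[Theorem~7.1.3, Remark~7.1.4]{cvetko}).

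For the equality at $t=k$, I would compute the trace. The sum of all eigenvalues equals $tr(\mathbf{U}^{\mathbf{Q}}(H))=\sum_{i=1}^k d^*_H(S_i)$, and since $H$ has no loops, Lemma~\ref{lem}$(ii)$ reduces to $\sum_{i=1}^k\xi_i(H)=vol(H)=\sum_{S\in I(H)}d^*_H(S)$. This forces equality in the $t=k$ case, completing the argument.

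I do not anticipate a genuine obstacle: the Schur majorization does all the analytic work once the matrix is identified. The only points requiring care are the bookkeeping that the diagonal of $\mathbf{U}^{\mathbf{Q}}(H)$ is genuinely the sorted modified-unified-degree sequence (where simplicity is used to annihilate the loop terms of $\mathbf{U}(H)$) and the trace identity, which I would secure by specializing Lemma~\ref{lem}$(ii)$ to the loopless case.
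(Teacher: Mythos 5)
Your proposal is correct and follows essentially the same route as the paper: identify the diagonal of $\mathbf{U}^{\mathbf{Q}}(H)$ with the modified unified degrees (using simplicity to kill the loop contributions), apply the eigenvalue--diagonal partial-sum inequality (the paper cites the positive semi-definite version \cite[Theorem~1.3.2]{cvetko}, you cite Schur majorization for Hermitian matrices, which is the same inequality in slightly greater generality), and settle the $t=k$ equality by the trace via Lemma~\ref{lem}$(ii)$. The only cosmetic difference is that your Hermitian-matrix formulation makes the positive semi-definiteness hypothesis unnecessary, whereas the paper leans on it to invoke its chosen reference.
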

\begin{proof}
	Since $\mathbf{U}^{\mathbf{Q}}(H)$ is positive semi-definite and from~\cite[Theorem~1.3.2]{cvetko}:``Let $M$ be a positive
	semi-definite matrix with eigenvalues $\lambda_1 \geq \lambda_2 \geq\dots\geq\lambda_n$. Then $\underset{i=1}{\overset{r}{\sum}}\lambda_i$ is bounded
	below by the sum of the $r$ largest diagonal entries of $M$", we obtain the inequalities of this result. Moreover, if $t=k$, equality holds by Lemma~\ref{lem}$(ii)$.
\end{proof}
\begin{thm}
	For a simple hypergraph $H$, we have $2m^*(H)\leq\xi_1(H)\leq2 \Delta(H)$. In addition, if $H$ is deeply connected, then the equality holds in either place if and only if $H$ is subset-regular and has no included edges.
\end{thm}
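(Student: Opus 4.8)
The plan is to transfer everything to the associated graph $G_H$ and then invoke the standard row-sum bounds for the spectral radius of a nonnegative matrix. Since $H$ is simple we have $\mathbf{U}^{\mathbf{Q}}(H)=Q(G_H)$, so $\xi_1(H)$ is the largest signless Laplacian eigenvalue of $G_H$; as $Q(G_H)=D(G_H)+A(G_H)$ is a nonnegative symmetric matrix (indeed positive semi-definite by Lemma~\ref{lem}(i)), $\xi_1(H)$ coincides with its spectral radius. The row of $Q(G_H)$ indexed by $S\in I(H)$ has diagonal entry $d^*_H(S)$ and off-diagonal entries summing to $d^*_H(S)$ (the degree of $S$ in $G_H$), so this row sums to $2d^*_H(S)$. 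Hence the minimum and maximum row sums of $Q(G_H)$ are $2m^*(H)$ and $2\hat{d^*}(H)$, and the elementary fact that the spectral radius of a nonnegative matrix lies between its smallest and largest row sums gives $2m^*(H)\le\xi_1(H)\le 2\hat{d^*}(H)$.

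To bring the upper bound into the stated form I would first establish the identity $\hat{d^*}(H)=\Delta(H)$ for a simple hypergraph. For a singleton $\{v\}\in I(H)$ one has $d^*_H(\{v\})=d_H(v)$, whence $\hat{d^*}(H)\ge\Delta(H)$; conversely, for a non-singleton $S\in I(H)$ every edge containing $S$ contains each $v\in S$, so $d^*_H(S)\le d_H(S)\le d_H(v)\le\Delta(H)$. Thus $\hat{d^*}(H)=\Delta(H)$ and the upper bound reads $\xi_1(H)\le 2\Delta(H)$.

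For the equality statement assume $H$ is deeply connected, so that $G_H$ is connected and $Q(G_H)$ is irreducible. By Perron--Frobenius theory the spectral radius of an irreducible nonnegative matrix equals its maximum (respectively minimum) row sum exactly when all row sums agree; therefore equality in either bound is equivalent to $G_H$ being regular, i.e.\ to $d^*_H(S)$ being constant over $I(H)$. It then remains to show that $G_H$ is regular if and only if $H$ is subset-regular with no included edges. The key observation is that, in a simple hypergraph, the non-singleton elements of $I(H)$ that are edges are precisely the included edges. Hence if $H$ has no included edges then $d^*_H(S)=d_H(S)$ for every $S$, and subset-regularity makes this common value constant, so $G_H$ is regular.

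For the converse direction I would argue that a $c$-regular $G_H$ admits no included edge: were a non-singleton edge $S_0$ included, then $S_0\in I(H)\cap E(H)$ would give $d^*_H(S_0)=d_H(S_0)-1=c$, i.e.\ $d_H(S_0)=c+1$, while any $v\in S_0$ satisfies $d_H(v)\ge d_H(S_0)=c+1>c=d^*_H(\{v\})=d_H(v)$, a contradiction. Consequently $d^*_H=d_H$ throughout, and regularity forces $d_H$ to be constant, so $H$ is subset-regular. The main obstacle I anticipate is precisely this last step: correctly identifying included edges inside $I(H)$ and tracking how the $-m(S)$ correction in the definition of $d^*_H$ interacts with regularity; the identity $\hat{d^*}(H)=\Delta(H)$ and the Perron--Frobenius equality conditions are comparatively routine.
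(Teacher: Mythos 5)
Your proof is correct, and its core move --- identifying $\xi_1(H)$ with the largest eigenvalue of $Q(G_H)$ and working in the associated graph --- is exactly the paper's. The difference lies in what is proved versus cited: the paper simply invokes \cite[Proposition~7.8.14]{cvetko} (the bounds $2\delta(G)\le\xi_1(G)\le 2\Delta(G)$ with equality, for connected $G$, iff $G$ is regular) and asserts without argument the two translation facts, namely that the relevant maximum degree of $G_H$ agrees with $\Delta(H)$ and that $G_H$ is regular if and only if $H$ is subset-regular with no included edges. You instead prove the graph-level statement from first principles (row sums of $Q(G_H)$ together with the Perron--Frobenius equality condition for irreducible nonnegative matrices) and, more importantly, you supply proofs of both bridging facts: the identity $\hat{d^*}(H)=\Delta(H)$ via $d^*_H(S)\le d_H(S)\le d_H(v)$ for $v\in S$, and the regularity equivalence via the observation that the non-singleton elements of $I(H)$ lying in $E(H)$ are precisely the included edges, plus the contradiction $d_H(v)\ge d_H(S_0)=c+1>c=d_H(v)$ when an included edge is assumed. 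These are exactly the points the paper glosses over (its proof records only $\delta(G_H)=m^*(H)$, even though its statement involves $\Delta(H)$ rather than $\hat{d^*}(H)$), so your write-up is a self-contained version of the same argument with the implicit steps filled in; what it costs is length, and what it buys is independence from the cited graph-theory text and an explicit verification that the hypergraph-level invariants in the statement really do translate to the graph-level ones.
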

\begin{proof}
	It is clear that $H$ is simple if and only if $G_H$ is simple. As the elements of $I(H)$ are the vertices of $G_H$, we have $\delta(G_H)=m^*(H)$. Notice that $H$ is deeply connected if and only if $G_H$ is connected. Moreover, $H$ is subset-regular and has no included edges if and only if $G_H$ is regular. Therefore, the proof follows from~\cite[Proposition~7.8.14]{cvetko}: ``For any simple graph $G$, we have $2\delta(G)\leq\xi_1(G)\leq2 \Delta(G)$. For
	a connected graph $G$, equality holds in either place if and only if $G$ is regular".
\end{proof}
\begin{thm}
	Let $H$ be a simple hypergraph with $e$-index $k$. Then, $\min\{d^*_H(S)+d^*_H(S')\}\leq \xi_1(H)\leq \max \{d^*_H(S)+d^*_H(S')\}$, where the minimum and the maximum runs over all the elements $\{S,S'\}$ of $\tau(H)$.
\end{thm}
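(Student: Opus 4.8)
The plan is to reduce the statement to a fact about the associated graph $G_H$ and then exploit the factorization $\mathbf{U}^{\mathbf{Q}}(H)=\mathbf{B}(H)\mathbf{B}(H)^T$ recorded above. Since $H$ is simple, $G_H$ is simple, $\mathbf{U}^{\mathbf{Q}}(H)=Q(G_H)$, and hence $\xi_1(H)$ is the largest eigenvalue of $Q(G_H)$. The elements of $\tau(H)$ are exactly the edges of $G_H$, and for each $S\in I(H)$ the quantity $d^*_H(S)$ equals the degree of $S$ in $G_H$, that is, the number of members of $\tau(H)$ containing $S$. Thus it suffices to bound $\xi_1(H)$ between the minimum and the maximum of $d^*_H(S)+d^*_H(S')$ taken over $\{S,S'\}\in\tau(H)$.

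First I would pass from $\mathbf{B}(H)\mathbf{B}(H)^T$ to the companion Gram matrix $M:=\mathbf{B}(H)^T\mathbf{B}(H)$, whose rows and columns are indexed by $\tau(H)$. Because $\mathbf{B}(H)\mathbf{B}(H)^T$ and $\mathbf{B}(H)^T\mathbf{B}(H)$ share the same nonzero eigenvalues and $M$ is positive semi-definite with a strictly positive largest eigenvalue (as $\tau(H)$ is nonempty), we have $\xi_1(H)=\lambda_1(M)$. The key computation is the row sums of $M$: the $(T,T')$-entry of $M$ equals $|T\cap T'|$ when $T,T'$ are viewed as two-element subsets of $I(H)$, so each diagonal entry is $2$ and an off-diagonal entry is $1$ precisely when $T$ and $T'$ share a member of $I(H)$. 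Hence for $T=\{S,S'\}$ the row sum of $M$ equals $2+(d^*_H(S)-1)+(d^*_H(S')-1)=d^*_H(S)+d^*_H(S')$, the two contributions being disjoint since the only member of $\tau(H)$ containing both $S$ and $S'$ is $T$ itself.

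It then remains to bound $\lambda_1(M)$ for the symmetric nonnegative matrix $M$ by its extreme row sums. The upper bound $\lambda_1(M)\le\max_T(\text{row sum of }M)$ is the standard estimate that the spectral radius of a nonnegative matrix is at most its maximum row sum, and this gives $\xi_1(H)\le\max\{d^*_H(S)+d^*_H(S')\mid\{S,S'\}\in\tau(H)\}$. For the lower bound I would use the Rayleigh quotient at the all-ones vector $\mathbf{1}\in\mathbb{R}^{|\tau(H)|}$, giving $\lambda_1(M)\ge \mathbf{1}^TM\mathbf{1}/\mathbf{1}^T\mathbf{1}$, which is exactly the average row sum of $M$ and hence is at least $\min\{d^*_H(S)+d^*_H(S')\mid\{S,S'\}\in\tau(H)\}$. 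Combining the two estimates yields the claim.

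The only delicate point is the identity $\xi_1(H)=\lambda_1(M)$: one must confirm that the largest eigenvalue of $\mathbf{B}(H)^T\mathbf{B}(H)$ is genuinely a nonzero eigenvalue, so that it transfers to $\mathbf{U}^{\mathbf{Q}}(H)$ rather than being lost among the zero eigenvalues of whichever of the two Gram matrices has the larger order. This is immediate once one notes $\lambda_1(M)\ge 2>0$ from the row-sum computation, since every row sum is at least $2$. Everything else is the routine row-sum and Rayleigh-quotient estimate; alternatively, after the reduction to $G_H$ the graph inequality $\min_{uv\in E}(d(u)+d(v))\le\xi_1(Q(G_H))\le\max_{uv\in E}(d(u)+d(v))$ may simply be quoted from the spectral graph theory literature.
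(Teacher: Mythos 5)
Your proof is correct, but it takes a more self-contained route than the paper. The paper's proof consists only of the translation step you perform at the outset --- $\mathbf{U}^{\mathbf{Q}}(H)=Q(G_H)$, the elements of $\tau(H)$ are exactly the edges of $G_H$, and $d^*_H(S)$ is the degree of $S$ in $G_H$ --- after which it simply cites the known graph inequality \cite[Theorem~7.8.15]{cvetko}; that is precisely the alternative you mention in your closing sentence. What you do differently is prove that inequality from scratch: you pass from $\mathbf{U}^{\mathbf{Q}}(H)=\mathbf{B}(H)\mathbf{B}(H)^T$ to the companion Gram matrix $M=\mathbf{B}(H)^T\mathbf{B}(H)$ (in graph terms, $2I$ plus the adjacency matrix of the line graph of $G_H$), compute that its row sum at $T=\{S,S'\}$ is exactly $d^*_H(S)+d^*_H(S')$, and then bound $\lambda_1(M)$ above by the maximum row sum of a nonnegative symmetric matrix and below by the Rayleigh quotient at the all-ones vector. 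Your handling of the one delicate point --- that $\lambda_1(M)\geq 2>0$, so the largest eigenvalue genuinely transfers between the two Gram matrices --- is also sound, as is the disjointness observation that no element of $\tau(H)$ other than $T$ itself contains both $S$ and $S'$. Each approach buys something: the paper's proof is two lines given the reference, while yours is independent of the graph-theoretic literature, works directly with the hypergraph's edge parts incidence matrix, and in fact yields a slightly stronger lower bound, namely that $\xi_1(H)$ is at least the \emph{average} of $d^*_H(S)+d^*_H(S')$ over $\tau(H)$, not merely the minimum. (Both proofs, like the statement itself, implicitly assume $\tau(H)\neq\emptyset$.)
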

\begin{proof}
	Notice that two vertices $S$ and $S'$ in $G_H$ are adjacent if and only if $\{S,S'\}\in \tau(H)$. Since the degree of a vertex $S$ in $G_H$ is the same as $d^*_H(S)$ in $H$, and since $\xi_1(G_H)=\xi_1(H)$, the proof directly follows from \cite[Theorem~7.8.15]{cvetko}: ``Let $G$ be a simple graph on $n$ vertices with vertex degrees $d_1,d_2,\dots, d_n$. Then, $\min\{d_i+d_j\}\leq \xi_1(G)\leq \max \{d_i+d_j\}$, where $(i,j)$ runs over all pairs of adjacent vertices of $G$".
\end{proof}

\begin{thm}
	Let $H$ be a simple hypergraph. Then the following holds:
	\begin{itemize}
		\item [(i)] $\xi_1(H)=0$ if and only if $H$ has no edges;
		\item[(ii)] $\xi_1(H)<4$ if and only if all the $DE$-components of $H$ are exact paths;
		\item[(iii)] For a deeply connected hypergraph $H$, $\xi_1(H)=4$ if and only if $H$ is either a cycle graph or $K_{1,3}$.
	\end{itemize}
\end{thm}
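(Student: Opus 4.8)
The plan is to reduce every statement to the corresponding fact about the signless Laplacian spectral radius of the associated graph $G_H$, exactly as in the preceding results of this section. Since $H$ is simple, $G_H$ is a simple graph and $\mathbf{U}^{\mathbf{Q}}(H)=Q(G_H)$, so $\xi_1(H)=\xi_1(G_H)$ is the largest signless Laplacian eigenvalue of $G_H$. Thus (i)--(iii) become the classical graph statements ``$\xi_1=0$ iff edgeless'', ``$\xi_1<4$ iff every component is a path'', and ``for a connected graph, $\xi_1=4$ iff the graph is $C_n$ or $K_{1,3}$'', which I would import from \cite{cvetko}. The only genuine work left is to carry the structural conclusions on the $G_H$ side back to $H$ through the dictionaries supplied by Lemma~\ref{comp} and Lemma~\ref{Knarg}.

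For (i) I would first observe that the edges of $G_H$ are in bijection with $\tau(H)$, and since $H$ is simple every edge has cardinality at least two and hence contributes at least one $2$-partition; therefore $G_H$ is edgeless if and only if $H$ has no edges. Since $\mathbf{U}^{\mathbf{Q}}(H)$ is positive semi-definite (Lemma~\ref{lem}(i)) with trace $2\sum_{e\in E^*(H)}m(e)|\tau(e)|$ by Lemma~\ref{lem}(ii), the condition $\xi_1(H)=0$ forces all eigenvalues to vanish, i.e.\ the trace is $0$, which happens exactly when $H$ has no edges; the converse is immediate.

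For (ii) I would use the graph fact that $\xi_1(G_H)<4$ if and only if every component of $G_H$ is a path. By Lemma~\ref{comp}, the components of $G_H$ correspond, with multiplicity, to the $DE$-components of $H$, and a component equal to a path $P_m$ corresponds via Lemma~\ref{Knarg}(i) to a $DE$-component that is an exact path. Combining these gives the equivalence. For (iii), deep connectedness of $H$ means $G_H$ is connected, so the connected-graph characterization applies: $\xi_1(G_H)=4$ iff $G_H\cong C_n$ or $G_H\cong K_{1,3}$. I would then convert $G_H\cong C_n$ to $H\cong C_n$ by Lemma~\ref{Knarg}(ii), and $G_H\cong K_{1,3}$ to $H\cong K_{1,3}$ by Lemma~\ref{Knarg}(iv) with partition sizes $(1,3)$ (noting that $K_{1,3}$ is complete bipartite, so the lemma does apply).

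The main obstacle I anticipate is not the spectral input, which is entirely classical, but the component-level bookkeeping in (ii): one must verify that the associated graph of a $DE$-component really is the matching component of $G_H$, so that Lemma~\ref{Knarg}(i) may be applied component-by-component, and one must fix the reading of ``the $DE$-component is an exact path'' as ``its associated graph is some $P_m$'' (with the isolated-vertex case treated as a degenerate path). Once this identification is pinned down, the translations are routine and the three parts follow.
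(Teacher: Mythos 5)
Your overall reduction via $\mathbf{U}^{\mathbf{Q}}(H)=Q(G_H)$ and \cite[Proposition~7.8.16]{cvetko} is exactly the paper's route, and your treatments of (i) and (iii) are correct: the paper handles (i) by the same observation that $G_H$ is edgeless iff $H$ is, and (iii) by the same appeal to parts (ii) and (iv) of Lemma~\ref{Knarg}. The genuine problem is part (ii). The identification you say ``must be verified'' --- that the associated graph of a $DE$-component is the matching component of $G_H$, so that Lemma~\ref{Knarg}(i) can be applied component-by-component --- is false, so the verification cannot be carried out. Take $H$ to be the single edge $\{1,2,3\}$, so $\xi_1(H)=2<4$. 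The $DE$-component attached to $[\{1\}]=\{\{1\},\{2,3\}\}$ is the subhypergraph $H'$ consisting of that one edge; but $I(H')=\{\{1\},\{2\},\{3\},\{1,2\},\{1,3\},\{2,3\}\}$, so $G_{H'}$ is a disjoint union of three copies of $K_2$, whereas the component of $G_H$ corresponding to $[\{1\}]$ under Lemma~\ref{comp} is the single edge joining $\{1\}$ and $\{2,3\}$, i.e.\ $P_2$. The point is that forming $I(H')$ for the standalone subhypergraph $H'$ reintroduces \emph{all} $2$-partitions of every edge of $H'$ and all singletons of $V(H')$, not just the elements of the equivalence class $[S]$. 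The same example shows that your proposed reading of ``the $DE$-component is an exact path'' as ``$G_{H'}\cong P_m$'' would make the theorem itself false: here $H'$ is an exact path in the paper's sense (the length-one sequence $\{1\},e,\{2,3\}$), yet $G_{H'}$ is not a path, so under your reading the right-hand side of (ii) would fail while $\xi_1(H)<4$ holds.

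What the paper does instead, and what you need, is a sequence-level translation that never forms the associated graph of $H'$: by Lemma~\ref{comp}, the component of $G_H$ attached to $[S]$ is the subgraph of $G_H$ induced by $[S]$, and a path $S_1,S_2,\dots,S_n$ in that component means each $\{S_i,S_{i+1}\}$ is a $2$-partition of an edge $e_i$ of $H$, so $S_1,e_1,S_2,e_2,\dots,e_{n-1},S_n$ is literally an exact path in $H$ whose parts are the vertices of that component, and conversely. With that reading --- the $DE$-component's edges together with the parts in $[S]$ arrange into an exact path sequence --- part (ii) follows from the correspondence of Lemma~\ref{comp} alone; Lemma~\ref{Knarg}(i) is neither needed nor applicable at the component level.
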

\begin{proof}
	\begin{itemize}   
		\item[(i)] Notice that $H$ has no edge if and only if $G_H$ has no edge.
		\item[(ii)] Let $H'$ be an $DE$-component of $H$ corresponding to the equivalence class $[S]$, where $S\in I(H)$. Then by Lemma~\ref{comp}, $[S]$ admits a component $G'$ in $G_H$ under $f$. Let $[S]=\{S_1,S_2,\dots,S_n\}$. It is clear that if $G'$ is a path in $G_H$ with vertices $S_1, S_2,\dots,S_n$, then $H'$ is an exact path in $H$ with parts $S_1, S_2,\dots,S_n$, and vice versa.

		Thus all the $DE$-components of $H$ are exact paths if and only if all the components of $G_H$ are paths.

		\item[(iii)] If $H$ is deeply connected, then $G_H$ is connected. Also, from parts~$(ii)$ and $(iv)$ of Lemma~\ref{Knarg}, $G_H\cong C_n$ if and only if $H\cong C_n$ and $G_H\cong K_{1,3}$ if and only if $H\cong K_{1,3}$.
	\end{itemize}
	Since $\mathbf{U}^{\mathbf{Q}}(H)=Q(G_H)$, the proof follows from~\cite[Proposition 7.8.16]{cvetko} and from the above facts.
\end{proof}
\begin{thm}
	Let $H$ be a deeply connected, simple hypergraph with $e$-index $k$. Then, $2+2~\cos\frac{\pi}{k}\leq \xi_1(H)\leq 2k-2$.
	The lower bound is attained for $P_k$, and the upper bound is for $K_k$. 
\end{thm}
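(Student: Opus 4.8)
The plan is to reduce the statement to a known bound on the largest signless Laplacian eigenvalue of the associated graph $G_H$, and then read off the extremal cases using Lemma~\ref{Knarg}. Since $H$ is simple, $G_H$ is a simple graph; since $H$ is deeply connected, $G_H$ is connected; and since the $e$-index of $H$ is $k$, the vertex set $I(H)$ of $G_H$ has exactly $k$ elements. Recalling that $\mathbf{U}^{\mathbf{Q}}(H)=Q(G_H)$ for a loopless hypergraph, the $\mathbf{U}^{\mathbf{Q}}$-spectrum of $H$ coincides with the signless Laplacian spectrum of $G_H$; in particular $\xi_1(H)=\xi_1(G_H)$.

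First I would invoke the standard spectral result that for a connected simple graph $G$ on $n$ vertices one has $2+2\cos\frac{\pi}{n}\le \xi_1(G)\le 2n-2$, applying it to $G=G_H$ with $n=k$. This directly yields $2+2\cos\frac{\pi}{k}\le \xi_1(H)\le 2k-2$. The two numerical endpoints are exactly the largest signless Laplacian eigenvalues of $P_k$ and $K_k$, respectively: $K_k$ has largest signless Laplacian eigenvalue $2(k-1)=2k-2$, and $P_k$ has largest signless Laplacian eigenvalue $2+2\cos\frac{\pi}{k}$, so these are the graphs that attain the bounds among connected graphs on $k$ vertices.

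It then remains only to phrase the extremal cases in terms of $H$ rather than $G_H$. For this I would apply parts~(i) and~(iii) of Lemma~\ref{Knarg}, namely $G_H\cong P_k$ if and only if $H\cong P_k$, and $G_H\cong K_k$ if and only if $H\cong K_k$. Consequently the lower bound is attained for $H\cong P_k$ and the upper bound for $H\cong K_k$. Since the entire argument is a translation to $G_H$ followed by a citation of the graph-theoretic bound from \cite{cvetko}, there is no genuine obstacle here; the only point demanding care is that the extremal graphs correspond correctly under the map $H\mapsto G_H$, which is precisely what Lemma~\ref{Knarg} guarantees.
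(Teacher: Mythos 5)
Your proposal is correct and follows essentially the same route as the paper's own proof: translate to the associated graph $G_H$ (simple, connected, on $k$ vertices), cite the known bound $2+2\cos\frac{\pi}{k}\leq \xi_1(G_H)\leq 2k-2$ from \cite{cvetko}, and use parts~$(i)$ and $(iii)$ of Lemma~\ref{Knarg} to identify the extremal hypergraphs. No gaps; the identification $\xi_1(H)=\xi_1(G_H)$ via $\mathbf{U}^{\mathbf{Q}}(H)=Q(G_H)$ is exactly the mechanism the paper relies on.
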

\begin{proof}
	Since $H$ is deeply connnected and simple, it follows that $G_H$ is connected and simple.	
	The proof follows from parts~$(i)$, $(iii)$ of Lemma~\ref{Knarg} and~\cite[Proposition 7.8.17]{cvetko}: ``Let $G$ be a connected graph on $n$ vertices. Then, $2+2~\cos \frac{\pi}{n}\leq \xi_1(G)\leq 2n-2$.
	The lower bound is attained for $P_n$, and the upper bound is for $K_n$''.
\end{proof}


\section{Unified normalized Laplacian matrix of a hypergraph}\label{Lsec6}
In this section, we introduce the unified normalized Laplacian matrix of a hypergraph and study its spectral properties. Also, we bound its eigenvalues using hypergraph invariants. 
\begin{defn}\normalfont
	Let $H$ be a hypergraph with $e$-index $k$.  \textit{The \textit{unified normalized Laplacian matrix} of $H$}, denoted by $\mathbf{U}^{\bm{\mathcal{L}}}(H)$, is the matrix of order $k$ whose rows and columns are indexed by the elements of $I(H)$ and for all $S_i, S_j\in I(H)$, 
	\begin{center}
		the $(S_i, S_j)^{th}$ entry of $\mathbf{U}^{\bm{\mathcal{L}}}(H)=$
		$\begin{cases}
		1, &\text{if}~i=j,~\text{and}~|S_i|\neq1;\\
		1-\frac{m(\{S_i\})}{d^*_H(S_i)}, &\text{if}~i=j,~|S_i|=1~\text{and}~S_i\in E(H);\\
		-\frac{c}{\sqrt{d^*_H(S_i)d^*_H(S_j)}}, &\text{if}~S_i\overset{c}{\sim} S_j;\\
		0,&\text{otherwise}.
		\end{cases}$
	\end{center}
\end{defn}
It can be seen that if $H$ has no isolated vertices, then $\mathbf{U}^{\bm{\mathcal{L}}}(H)=\mathbf{U}^{\mathbf{D}}(H)^{-\frac{1}{2}}\mathbf{U}^{\mathbf{L}}(H)\mathbf{U}^{\mathbf{D}}(H)^{-\frac{1}{2}}$.

 Since $\mathbf{U}^{\bm{\mathcal{L}}}(H)$ is real symmetric, its eigenvalues all are real. We denote them by $\hat{\nu}_1(H),\hat{\nu}_2(H),$ $\dots,\hat{\nu}_k(H)$ and we shall assume that $\hat{\nu}_1(H)\geq\hat{\nu}_2(H)\geq\dots\geq\hat{\nu}_k(H)$.  The characteristic polynomial of $\mathbf{U}^{\bm{\mathcal{L}}}(H)$ is said to be the \emph{unified normalized Laplacian characteristic polynomial of $H$}. An eigenvalue of $\mathbf{U}^{\bm{\mathcal{L}}}(H)$ is said to be a \emph{unified normalized Laplacian eigenvalue of $H$} and the spectrum of $\mathbf{U}^{\bm{\mathcal{L}}}(H)$ is said to be the \emph{unified normalized Laplacian spectrum of $H$}, or simply \textit{$\mathbf{U}^{\bm{\mathcal{L}}}(H)$-spectrum} of $H$. 

	\begin{note}\normalfont\label{isolated}
	For a loopless hypergraph $H$, $\mathbf{U}^{\bm{\mathcal{L}}}(H)=\hat{L}(G_H)$.
	A hypergraph $H$ has no isolated vertices if and only if $G_H$ has no isolated vertices: For, suppose that $H$ has no isolated vertices. Notice that a vertex of $G_H$ is corresponding to either a vertex of $H$ or a proper subset of an edge of $H$. So, by the construction of $G_H$, the vertices of $G_H$ cannot be isolated. The converse is clear.
	 It is known that for a loopless hypergraph $H$, $\mathbf{U}^{\mathbf{D}}(H)=D(G_H)$ and $\mathbf{U}^{\mathbf{L}}(H)=L(G_H)$. In view of these, for a loopless hypergaph $H$ having no isolated vertices, we have  $$\mathbf{U}^{\bm{\mathcal{L}}}(H)=\mathbf{U}^{\mathbf{D}}(H)^{-\frac{1}{2}}\mathbf{U}^{\mathbf{L}}(H)\mathbf{U}^{\mathbf{D}}(H)^{-\frac{1}{2}}=D(G_H)^{-\frac{1}{2}}L(G_H)D(G_H)^{-\frac{1}{2}}=\hat{L}(G_H).$$
\end{note}
	On the other hand, if $H$ is a loopless graph, then $\mathbf{U}^{\bm{\mathcal{L}}}(H)= \hat{L}(H)$ and so the eigenvalues of $\mathbf{U}^{\bm{\mathcal{L}}}(H)$  and $\hat{L}(H)$ are the same. Thereby, we denote the eigenvalues of these two matrices commonly as $\hat{\nu}_i(H)$, $i=1,2,\dots,k$. These shows that the unified normalized Laplacian matrix of a hypergraph is natural generalization of the normalized Laplacian matrix of a loopless graph.
\begin{thm}
	If $H$ is a hypergraph, then  $\mathbf{U}^{\bm{\mathcal{L}}}(H)$ is singular.
\end{thm}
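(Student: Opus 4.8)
The plan is to prove singularity by exhibiting a nontrivial kernel, splitting into two cases according to whether $H$ has an isolated vertex.

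First I would dispose of the case where $H$ has an isolated vertex $v$, so that $d_H(v)=0$ and $\{v\}\notin E(H)$. Here I would look at the row of $\mathbf{U}^{\bm{\mathcal{L}}}(H)$ indexed by the singleton $\{v\}\in I(H)$. Since $|\{v\}|=1$ but $\{v\}\notin E(H)$, neither of the two diagonal cases in the definition applies, so the diagonal entry is $0$; and since $\{v\}$ has no neighbour in $H$, every off-diagonal entry in this row is $0$ as well. Thus $\mathbf{U}^{\bm{\mathcal{L}}}(H)$ has a zero row, so $\det\mathbf{U}^{\bm{\mathcal{L}}}(H)=0$ and the matrix is singular. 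This case also absorbs edgeless and trivial hypergraphs.

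Next I would treat the case where $H$ has no isolated vertices. Then, as recorded just after the definition, $\mathbf{U}^{\bm{\mathcal{L}}}(H)=\mathbf{U}^{\mathbf{D}}(H)^{-\frac{1}{2}}\mathbf{U}^{\mathbf{L}}(H)\mathbf{U}^{\mathbf{D}}(H)^{-\frac{1}{2}}$, where the diagonal matrix $\mathbf{U}^{\mathbf{D}}(H)$ is invertible precisely because every $d^*_H(S)>0$. I would then take the vector $x=\mathbf{U}^{\mathbf{D}}(H)^{\frac{1}{2}}\mathbf{1}$, whose $S$-entry is $\sqrt{d^*_H(S)}$; this is nonzero since all $d^*_H(S)>0$. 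By Lemma~\ref{Ob}(i) the row sums of $\mathbf{U}^{\mathbf{L}}(H)$ vanish, i.e. $\mathbf{U}^{\mathbf{L}}(H)\mathbf{1}=\mathbf{0}$, whence $\mathbf{U}^{\bm{\mathcal{L}}}(H)x=\mathbf{U}^{\mathbf{D}}(H)^{-\frac{1}{2}}\mathbf{U}^{\mathbf{L}}(H)\mathbf{U}^{\mathbf{D}}(H)^{-\frac{1}{2}}\mathbf{U}^{\mathbf{D}}(H)^{\frac{1}{2}}\mathbf{1}=\mathbf{U}^{\mathbf{D}}(H)^{-\frac{1}{2}}\mathbf{U}^{\mathbf{L}}(H)\mathbf{1}=\mathbf{0}$. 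So $x$ is a nonzero kernel vector and $\mathbf{U}^{\bm{\mathcal{L}}}(H)$ is singular.

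The argument is essentially routine, and I expect the only genuine point of care to be the bookkeeping around loops and isolated vertices: one must confirm that $d^*_H(S)>0$ for every $S\in I(H)$ exactly when $H$ has no isolated vertices. For a singleton $\{v\}$ this is immediate since $d^*_H(\{v\})=d_H(v)$, while for a non-singleton part $S$ it holds automatically, because $S$ is a proper subset of some edge $e$, giving $d_H(S)\geq m(S)+1$ and hence $d^*_H(S)\geq 1$. This is what legitimizes the factorization in the second case and guarantees that the kernel vector is genuinely nonzero. One could instead avoid the case split by checking row by row, via Lemma~\ref{Ob}(i), that $x_S=\sqrt{d^*_H(S)}$ lies in the kernel, but the two-case presentation handles the degenerate (edgeless and trivial) hypergraphs most transparently.
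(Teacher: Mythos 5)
Your proof is correct and takes essentially the same route as the paper: both arguments exhibit the same explicit kernel vector $\mathbf{U}^{\mathbf{D}}(H)^{\frac{1}{2}}J_{k\times 1}$. The only differences are cosmetic — the paper splits off the edgeless case (where the matrix is zero) rather than the isolated-vertex case, and it asserts kernel membership by direct inspection, whereas you derive it from the factorization $\mathbf{U}^{\mathbf{D}}(H)^{-\frac{1}{2}}\mathbf{U}^{\mathbf{L}}(H)\mathbf{U}^{\mathbf{D}}(H)^{-\frac{1}{2}}$ together with Lemma~\ref{Ob}(i), which if anything makes the verification more explicit.
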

\begin{proof}
	If $H$ has no edge, then the result is obvious. 
	If $H$ has atleast one edge, then the vector $\mathbf{U}^{\mathbf{D}}(H)^{\frac{1}{2}}J_{k\times 1}$ is non-zero.
	Then, it can be easily seen that $0$ is an eigenvalue of $\mathbf{U}^{\bm{\mathcal{L}}}(H)$ corresponding to the eigenvector $\mathbf{U}^{\mathbf{D}}(H)^{\frac{1}{2}}J_{k\times 1}$ and so $\mathbf{U}^{\bm{\mathcal{L}}}(H)$ is singular. 
\end{proof}
The following theorem counts the arithmetic multiplicity of the zero eigenvalue of $\mathbf{U}^{\bm{\mathcal{L}}}(H)$ using $DE$-components of $H$.
\begin{thm}\label{0 eig}
	The number of $DE$-components of a simple hypergraph $H$ counted with their multiplicities equals the arithmetic multiplicity of $0$ as the eigenvalue of $\mathbf{U}^{\bm{\mathcal{L}}}(H)$.
\end{thm}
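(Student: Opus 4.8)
The plan is to transfer the question to the associated graph $G_H$ and then invoke the classical normalized--Laplacian fact. Since $H$ is simple it is in particular loopless, so by Note~\ref{isolated} we have $\mathbf{U}^{\bm{\mathcal{L}}}(H)=\hat{L}(G_H)$; consequently the two matrices have the same spectrum, and because $\mathbf{U}^{\bm{\mathcal{L}}}(H)$ is real symmetric, the arithmetic multiplicity of $0$ coincides with its nullity, i.e.\ with the multiplicity of $0$ as an eigenvalue of $\hat{L}(G_H)$. On the combinatorial side, Lemma~\ref{comp} already tells us that the number of $DE$-components of $H$ counted with their multiplicities equals the number of connected components of $G_H$. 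Thus the whole statement reduces to the single assertion that the multiplicity of the eigenvalue $0$ of $\hat{L}(G_H)$ equals the number of connected components of $G_H$.

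To settle this reduced claim I would cite the standard result from spectral graph theory (see \cite{chung bk}) that the multiplicity of $0$ as an eigenvalue of the normalized Laplacian of a graph equals its number of connected components. Concretely, ordering the vertices of $G_H$ component by component makes $\hat{L}(G_H)$ block diagonal with one block per component. A non-trivial component $C$ contributes the block $\hat{L}(C)=D(C)^{-\frac12}L(C)D(C)^{-\frac12}$, which is a congruence of $L(C)$ by a nonsingular diagonal matrix and hence has the same nullity, namely $1$ (since $C$ is connected), its null space being spanned by $D(C)^{\frac12}$ applied to the all-ones vector. Summing the per-block nullities over all components then yields exactly the number of components. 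A slicker alternative, closer to the proof of Theorem~\ref{econst}, is available whenever $H$ has no isolated vertices: there $\mathbf{U}^{\bm{\mathcal{L}}}(H)=\mathbf{U}^{\mathbf{D}}(H)^{-\frac12}\mathbf{U}^{\mathbf{L}}(H)\mathbf{U}^{\mathbf{D}}(H)^{-\frac12}$ with $\mathbf{U}^{\mathbf{D}}(H)^{-\frac12}$ nonsingular, so $r(\mathbf{U}^{\bm{\mathcal{L}}}(H))=r(\mathbf{U}^{\mathbf{L}}(H))$, the two matrices share the same nullity, and the count follows immediately from Theorem~\ref{econst}.

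The only genuinely delicate point---and the step I expect to require the most care---is the presence of isolated vertices. An isolated vertex $v$ of $H$ yields an isolated vertex $\{v\}$ of $G_H$ (a trivial $DE$-component) for which $d^*_H(\{v\})=0$; here the one-line congruence argument breaks down, since $\mathbf{U}^{\mathbf{D}}(H)$ is then singular and the paper's convention places a $0$ on the corresponding diagonal entry of $\mathbf{U}^{\bm{\mathcal{L}}}(H)$. One must verify that each such vertex still contributes exactly $1$ to the multiplicity of $0$ (it does, through its $1\times1$ zero block) and exactly one trivial component, so that the spectral count and the combinatorial count remain in step. It is precisely this uniform treatment of trivial and non-trivial components that makes the block-diagonal formulation, rather than the pure congruence argument, the safe route to the full theorem.
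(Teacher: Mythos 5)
Your proof is correct and follows essentially the same route as the paper: both reduce the statement to the associated graph via Lemma~\ref{comp} (together with $\mathbf{U}^{\bm{\mathcal{L}}}(H)=\hat{L}(G_H)$) and then invoke the classical fact that the multiplicity of the eigenvalue $0$ of the normalized Laplacian of a simple graph equals its number of connected components, which the paper cites as \cite[Theorem~7.7.3]{cvetko}. Your additional block-diagonal verification and the careful treatment of isolated vertices are sound elaborations of what the paper delegates to that citation, not a different argument.
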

\begin{proof}
	As the number of components of $G_H$ equals the number of $DE$-components of $H$ counting with their multiplicities (c.f. Lemma~\ref{comp}), the proof directly follows from~\cite[Theorem~7.7.3]{cvetko}: ``The multiplicity of $0$ as an eigenvalue of $\hat{L}(G)$ equals the number of components of a simple graph $G$". 
\end{proof}

\begin{thm}\label{thm(v)}
	Let $H$ be a simple hypergraph with $e$-index $k~(\geq2)$. Then, we have the following.
	\begin{itemize}
		\item[(i)] $\underset{i=1}{\overset{k}{\sum}}\hat{\nu}_i(H)\leq k$, equality holds if and only if $H$ has no trivial $DE$-component;
		
		\item[(ii)] If $H\neq K_k$, then $\hat{\nu}_{k-1}(H)\leq 1$;
		
		\item[(iii)] If $H$ has no isolated vertices, then $\hat{\nu}_{k-1}(H)\leq \frac{k}{k-1}$ and $\hat{\nu}_{1}(H)\geq \frac{k}{k-1}$; Equality holds if and only if $H=K_k$;
		\item[(iv)] $\hat{\nu}_{1}(H)\leq2$, with equality if and only if there exists a non-trivial equivalence class $[S]$ induced by $\rho$ on $I(H)$ such that no $T\in [S]$ is a part of an odd exact cycle in $H$;
	
		\item[(v)]
		$\underset{i=1}{\overset{k}{\sum}}\hat{\nu}_i(H)=k-t$, where $t$ is the number of trivial $DE$-components of $H$.
	\end{itemize}
	\begin{proof}
		We observe the following:
			If the $e$-index of $H$ is atleast $2$, then $G_H$ has atleast $2$ vertices. Also, $H$ is simple if and only if $G_H$ is simple.
				
		Since $\mathbf{U}^{\bm{\mathcal{L}}}(H)=\hat{L}(G_H)$, the results $(i)$-$(iv)$ follow from \cite[Theorem~$7.7.2$]{cvetko} and from the respective  arguments given below.
		
		\begin{itemize}
			\item[(i)] From Lemma~\ref{Knarg}$(v)$, it is known that $H$ has no isolated vertices if and only if $G_H$ has no isolated vertices. 
			 Also, notice that in a simple hypergraph, the isolated vertices are the trivial $DE$-components of $H$.
			 			
			\item[(ii)]and (iii) follows from Lemma~\ref{Knarg}$(iii)$.
			\item[(iv)] As in the proof of Lemma~\ref{comp}, if $H$ has a non-trivial $DE$-component $H'$ corresponding to $[S]$, then $[S]$ admits a non-trivial component $G_H'$ in $G_H$ under $f$. Notice that $[S]$ has no $T$ as a part of any odd exact cycle if and only if $G_H'$ has no odd cycle and so it is bipartite. Thus $H$ has a non-trivial $DE$-component which has no odd exact cycle if and only if $G_H$ has a non-trivial bipartite component. 
			\item[(v)]Suppose $H$ has $t$ trivial $DE$-components. Then $H$ has $t$ isolated vertices. Let $H'$ be the subhypergraph of $H$ obtained by deleting all its isolated vertices. Then, $H'$ has no trivial $DE$-component and has $e$-index $k-t$. Therefore, by part~$(i)$, $\underset{i=1}{\overset{k-t}{\sum}}\hat{\nu}_i(H')=k-t$. Since the trace of $\mathbf{U}^{\bm{\mathcal{L}}}(H)$ and $\mathbf{U}^{\bm{\mathcal{L}}}(H')$ are the same, we have $\underset{i=1}{\overset{k-t}{\sum}}\hat{\nu}_i(H')=\underset{i=1}{\overset{k}{\sum}}\hat{\nu}_i(H)$.
		\end{itemize}
	\end{proof}
\end{thm}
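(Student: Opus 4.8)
The plan is to transfer everything to the associated graph $G_H$. By Note~\ref{isolated} we have $\mathbf{U}^{\bm{\mathcal{L}}}(H)=\hat{L}(G_H)$, so $\hat{\nu}_i(H)=\hat{\nu}_i(G_H)$ for all $i$, and every assertion about the $\mathbf{U}^{\bm{\mathcal{L}}}$-spectrum becomes an assertion about the normalized Laplacian spectrum of $G_H$. Since the $e$-index $k\geq 2$ guarantees that $G_H$ has at least two vertices, and $H$ is simple if and only if $G_H$ is simple, the hypotheses of the standard normalized-Laplacian result \cite[Theorem~7.7.2]{cvetko} are met for $G_H$. I would therefore quote that theorem once and devote the proof to translating each of its graph-theoretic conclusions and equality conditions back into the language of $H$, using the structural dictionary supplied by Lemma~\ref{comp} and Lemma~\ref{Knarg}.

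For parts~(i)--(iii) the translations are essentially bookkeeping. The trace bound $\sum_i\hat{\nu}_i(G_H)\leq |V(G_H)|$, with equality if and only if $G_H$ has no isolated vertex, becomes (i) once I observe---via the isolated-vertex equivalence of Note~\ref{isolated} together with Lemma~\ref{comp}---that the isolated vertices of $G_H$ are exactly the trivial $DE$-components of the simple hypergraph $H$. For (ii) and (iii) the only graph condition that occurs is ``$G_H$ is complete'', and Lemma~\ref{Knarg}(iii) gives $G_H\cong K_k$ if and only if $H\cong K_k$; feeding this into the inequalities $\hat{\nu}_{k-1}(G_H)\leq 1$ in the non-complete case, and $\hat{\nu}_{k-1}(G_H)\leq \frac{k}{k-1}\leq\hat{\nu}_1(G_H)$ with equality exactly in the complete case (when $G_H$ has no isolated vertex), yields (ii) and (iii) verbatim.

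Part~(iv) is where a little care is needed. The graph statement is $\hat{\nu}_1(G_H)\leq 2$ with equality if and only if $G_H$ has a non-trivial bipartite component. Following the bijection $f$ of Lemma~\ref{comp}, a non-trivial $DE$-component $H'$ of $H$ corresponding to an equivalence class $[S]$ maps to a non-trivial component $G'$ of $G_H$; I would argue that $G'$ is bipartite (has no odd cycle) if and only if no $T\in[S]$ lies on an odd exact cycle of $H$, since odd cycles of $G_H$ correspond precisely to odd exact cycles of $H$ (the length being the common edge count). This converts the bipartite-component condition into the stated condition on equivalence classes.

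Finally, part~(v) is the one step that is not a direct citation, and I expect it to be the main obstacle. Here I would delete from $H$ all $t$ of its isolated vertices to obtain a subhypergraph $H'$; since isolated vertices are exactly the trivial $DE$-components, $H'$ has none and has $e$-index $k-t$, so part~(i) gives $\sum_{i=1}^{k-t}\hat{\nu}_i(H')=k-t$. The remaining point is that an isolated vertex $\{v\}$ of $H$ contributes only an all-zero row and column of $\mathbf{U}^{\bm{\mathcal{L}}}(H)$---its diagonal entry is $0$ because $\{v\}\notin E(H)$ and it has no neighbor---so deleting it leaves the trace, hence the eigenvalue sum, unchanged. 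Thus $\sum_{i=1}^{k}\hat{\nu}_i(H)=\sum_{i=1}^{k-t}\hat{\nu}_i(H')=k-t$. Verifying this trace-invariance and the zero-diagonal claim is the only genuinely computational piece of the argument.
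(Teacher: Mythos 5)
Your proposal is correct and follows essentially the same route as the paper's own proof: identify $\mathbf{U}^{\bm{\mathcal{L}}}(H)=\hat{L}(G_H)$, invoke \cite[Theorem~7.7.2]{cvetko}, translate the graph conditions back to $H$ via Lemma~\ref{comp} and Lemma~\ref{Knarg}, and prove~(v) by deleting the $t$ isolated vertices and using trace invariance. Your explicit justification of the trace invariance (the all-zero row and column of an isolated vertex) is a small refinement of a step the paper merely asserts, but the argument is otherwise identical.
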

\begin{cor}
	Let $H$ be a hypergraph. Then the multiplicity of the eigenvalue $2$ of $\mathbf{U}^{\bm{\mathcal{L}}}(H)$ is equal to the number of non-trivial equivalence classes $[S]$ induced by $\rho$ on $I(H)$ such that no $T\in [S]$ is a part of an odd exact cycle in $H$. In particular, if $H$ has no odd exact cycle, then the number of non-trivial $DE$-components of $H$ counted with their multiplicities equals the arithmetic multiplicity of $2$ as the eigenvalue of $\mathbf{U}^{\bm{\mathcal{L}}}(H)$.
\end{cor}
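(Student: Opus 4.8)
The plan is to push the computation onto the associated graph $G_H$ and then read off the answer from the classical spectral description of the normalized Laplacian. By Note~\ref{isolated}, for a loopless hypergraph $H$ one has $\mathbf{U}^{\bm{\mathcal{L}}}(H)=\hat{L}(G_H)$, so the arithmetic multiplicity of $2$ as an eigenvalue of $\mathbf{U}^{\bm{\mathcal{L}}}(H)$ equals its multiplicity as an eigenvalue of $\hat{L}(G_H)$. Hence it suffices to count the occurrences of $2$ in the spectrum of $\hat{L}(G_H)$.

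For the graph side I would invoke the same circle of results already used for Theorem~\ref{thm(v)}(iv), namely \cite[Theorem~7.7.2]{cvetko} (see also \cite{chung bk}): the normalized Laplacian spectrum of a connected non-trivial bipartite graph is symmetric about $1$, so $2$ occurs there with the same multiplicity as $0$, namely once; a non-bipartite component contributes no eigenvalue $2$, and a trivial component (isolated vertex) contributes the eigenvalue $0$ rather than $2$. Summing over the components of $G_H$, the multiplicity of $2$ as an eigenvalue of $\hat{L}(G_H)$ equals the number of non-trivial bipartite components of $G_H$.

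It then remains to translate ``non-trivial bipartite component of $G_H$'' into hypergraph terms, and here I would reuse two facts already at hand. First, Lemma~\ref{comp} furnishes a bijection between the equivalence classes $[S]$ induced by $\rho$ on $I(H)$ (equivalently, the $DE$-components counted with their multiplicities) and the components of $G_H$. Second, exactly as recorded in the proof of Theorem~\ref{thm(v)}(iv), the component of $G_H$ attached to $[S]$ is non-trivial and bipartite precisely when $[S]$ is non-trivial and no $T\in[S]$ is a part of an odd exact cycle in $H$. Combining the two, the multiplicity of $2$ equals the number of such equivalence classes, which is the first assertion.

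Finally, for the last clause I would simply note that if $H$ has no odd exact cycle then the side condition on $[S]$ is automatic, so the count collapses to the number of non-trivial equivalence classes, i.e.\ the number of non-trivial $DE$-components of $H$ counted with their multiplicities. I do not anticipate a serious obstacle: once the identity $\mathbf{U}^{\bm{\mathcal{L}}}(H)=\hat{L}(G_H)$ is invoked the argument is bookkeeping, and the only delicate point is to keep trivial components out of the count, since an isolated vertex sits at eigenvalue $0$, not $2$.
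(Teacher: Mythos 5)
Your proposal is correct and follows essentially the same route as the paper's own proof: both reduce the question to the associated graph via $\mathbf{U}^{\bm{\mathcal{L}}}(H)=\hat{L}(G_H)$, invoke the Lemma~\ref{comp} correspondence between equivalence classes of $\rho$ and components of $G_H$, translate ``no $T\in[S]$ is a part of an odd exact cycle'' into bipartiteness of the corresponding component exactly as in Theorem~\ref{thm(v)}(iv), and then count non-trivial bipartite components. If anything, you are more explicit than the paper on the key multiplicity point --- that each non-trivial bipartite component contributes the eigenvalue $2$ exactly once (by the symmetry of the bipartite normalized Laplacian spectrum about $1$), while non-bipartite and trivial components contribute none --- whereas the paper cites Theorem~\ref{thm(v)}(iv) only for the existence of the eigenvalue $2$ and then asserts the count.
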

\begin{proof}
	Let $[S]$ be a non-trivial equivalence class induced by $\rho$ on $I(H)$ such that no $T\in [S]$ is part of an odd exact cycle in $H$. Corresponding to $[S]$, there exists a component $G'$ of $G_H$ (c.f.~Lemma~\ref{comp}). Also, notice that $V(G')(=[S])$ is an equivalence class induced by $\rho$ on $V(G_H)$. Thus from Theorem~\ref{thm(v)}$(iv)$, $G'$ has $2$ as a normalized Laplacian eigenvalue. Since the eigenvalues of $\mathbf{U}^{\bm{\mathcal{L}}}(H)$ contains all the eigenvalues of each component of $G_H$, it follows that $H$ has $2$ as its normalized Laplacian eigenvalue with multiplicity equals the number of equivalence classes induced by $\rho$ on $I(H)$ such that no $T\in [S]$ is part of an odd exact cycle in $H$. 
	
	In particular, if $H$ has no odd exact cycle, no $T\in I(H)$ is part of an odd exact cycle. Then proof follows from the fact that the number of equivalance classes induced by $\rho$ on $I(H)$ equals the number of $DE$-components of $H$, counted with their multiplicities. 
\end{proof}

\begin{thm}
	A simple hypergraph $H$ with $e$-index $k$ has no odd exact cycle if and only if $\hat{\nu}_{1}(H)=2$ with the same multiplicity as $\hat{\nu}_{k}(H)$.
\end{thm}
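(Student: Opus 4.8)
The plan is to push everything to the associated graph $G_H$ and then exploit the reflection symmetry of the normalized Laplacian spectrum of a bipartite graph. Since $H$ is simple it is loopless, so by Note~\ref{isolated} we have $\mathbf{U}^{\bm{\mathcal{L}}}(H)=\hat{L}(G_H)$, whence $\sigma(\mathbf{U}^{\bm{\mathcal{L}}}(H))=\sigma(\hat{L}(G_H))$. Also $\mathbf{U}^{\bm{\mathcal{L}}}(H)$ is singular, so its smallest eigenvalue is $\hat{\nu}_k(H)=0$ and ``the multiplicity of $\hat{\nu}_k(H)$'' means the multiplicity of $0$; by Theorem~\ref{0 eig} this multiplicity is the number of components of $G_H$. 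Finally, $H$ has no odd exact cycle if and only if $G_H$ has no odd cycle, i.e. if and only if $G_H$ is bipartite (the exact-cycle/cycle correspondence already invoked in Section~\ref{Lsec5}). Thus the task reduces to the graph statement: every component of $G_H$ is non-trivial and bipartite iff $2$ is the top eigenvalue of $\hat{L}(G_H)$ with multiplicity equal to that of $0$.

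For the eigenvalue-$2$ count I would argue component by component via the sign-conjugation trick. Let $C$ be a non-trivial component with bipartition $(A,B)$ and write $\hat{L}(C)=I-N$, where $N=D^{-1/2}A(C)D^{-1/2}$ is the normalized adjacency of $C$; let $S$ be the diagonal $\pm1$ matrix equal to $+1$ on $A$ and $-1$ on $B$. Since every edge of $C$ joins $A$ to $B$ we have $SNS=-N$, hence $S\,\hat{L}(C)\,S=2I-\hat{L}(C)$, so $\hat{L}(C)(Sx)=(2-\lambda)Sx$ whenever $\hat{L}(C)x=\lambda x$. Thus $x\mapsto Sx$ is a bijection between the $\lambda$- and $(2-\lambda)$-eigenspaces, the spectrum of $\hat{L}(C)$ is symmetric about $1$, and in particular $C$ contributes the eigenvalues $0$ and $2$ with equal multiplicities. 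A non-bipartite component contributes a $0$ but no $2$ (its top normalized eigenvalue is strictly below $2$), and an isolated vertex contributes only a $0$.

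Summing over components, the multiplicity of $2$ equals the number of non-trivial bipartite components while the multiplicity of $0$ equals the total number of components; these agree precisely when every component is non-trivial and bipartite. This yields both directions: if $H$ has no odd exact cycle then each component of $G_H$ is bipartite and, being non-trivial, pairs its $0$ with a $2$, giving $\hat{\nu}_1(H)=2$ with the same multiplicity as $\hat{\nu}_k(H)$; conversely, equality of the two multiplicities rules out non-bipartite (and trivial) components, so $G_H$ is bipartite and $H$ has no odd exact cycle. Equivalently, one may replace the second paragraph by quoting the corollary following Theorem~\ref{thm(v)} for the multiplicity of $2$ and Theorem~\ref{0 eig} for the multiplicity of $0$.

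The step I expect to be the real obstacle is the bookkeeping around trivial $DE$-components: an isolated vertex of $G_H$ always contributes to the multiplicity of $0$ but never to that of $2$, so the clean equivalence holds only after isolated vertices are excluded. The statement is therefore to be read under the hypothesis (implicit in the identity $\mathbf{U}^{\bm{\mathcal{L}}}(H)=\mathbf{U}^{\mathbf{D}}(H)^{-1/2}\mathbf{U}^{\mathbf{L}}(H)\mathbf{U}^{\mathbf{D}}(H)^{-1/2}$ of Note~\ref{isolated}) that $H$ has no isolated vertices; granting this, the remaining work is the routine symmetry identity above.
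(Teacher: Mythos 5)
Your proof is correct, and its overall route coincides with the paper's: both arguments translate the statement to the associated graph, using $\mathbf{U}^{\bm{\mathcal{L}}}(H)=\hat{L}(G_H)$ together with the equivalence ``$H$ has no odd exact cycle $\Leftrightarrow$ $G_H$ has no odd cycle $\Leftrightarrow$ $G_H$ is bipartite'', and then rest on the spectral characterization of bipartiteness for the normalized Laplacian. The difference lies in the last step. The paper quotes Corollary~7.7.4 of Cvetkovi\'{c}--Rowlinson--Simi\'{c} as a black box, while you prove the graph-level fact yourself, component by component, via the sign-conjugation $S\hat{L}(C)S=2I-\hat{L}(C)$ on a bipartite component $C$; this buys a self-contained argument and, more importantly, forces the multiplicity bookkeeping that the citation hides. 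Your caveat about isolated vertices is a genuine catch rather than a gap in your proof: a trivial $DE$-component (equivalently, an isolated vertex of $H$, equivalently an isolated vertex of $G_H$) contributes an eigenvalue $0$ of $\mathbf{U}^{\bm{\mathcal{L}}}(H)$ but no eigenvalue $2$, so the forward implication of the theorem as printed fails, e.g., for $V(H)=\{1,2,3\}$, $E(H)=\{\{1,2\}\}$: here $H$ is simple with no odd exact cycle, $G_H=K_2\cup K_1$, and the $\mathbf{U}^{\bm{\mathcal{L}}}$-spectrum is $\{0,0,2\}$, so $\hat{\nu}_1(H)=2$ has multiplicity $1$ while $\hat{\nu}_3(H)=0$ has multiplicity $2$. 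The paper's proof inherits this defect from the quoted corollary (which, under the Chung-type convention for isolated vertices adopted in this paper, itself requires positive minimum degree), whereas your reading of the theorem under the hypothesis that $H$ has no isolated vertices --- noting, as you do, that the converse direction needs no such hypothesis, since equality of the two multiplicities already excludes trivial components --- is exactly the right repair.
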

\begin{proof}
	It is clear that $H$ has no odd exact cycle if and only if $G_H$ has no odd cycle, i.e., if and only if $G_H$ is bipartite.	
	So, the result follows from~\cite[Corollary 7.7.4]{cvetko}: ``A simple graph $G$ is bipartite on $n$ vertices if and only if the eigenvalue $\hat{\nu}_{1}(G)=2$ with the same multiplicity as $\hat{\nu}_{n}(G)$".
\end{proof}
\begin{thm}
	Let $H$ be a simple deeply connected hypergraph. If each edge of $H$ belongs to at least $r$ exact cycles of length $3$, then \[\hat{\nu}_{1}(H)\leq \underset{S\in I(H)}{\max}\bigg\{1+\frac{1}{2d^*_H(S)}\left(\sqrt{4d^*_H(S)(t(S)-r)+r^2}-r\right)\bigg\},\] where $t(S)=\underset{S'\in N_H(S)}{\sum}d^*_H(S')$. The equality holds for $K_{r+2}$.
\end{thm}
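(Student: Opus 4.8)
The plan is to reduce the statement to the corresponding bound for the normalized Laplacian spectral radius of the associated graph $G_H$, exactly as in the preceding results of this section. Since $H$ is simple it is loopless, so by Note~\ref{isolated} we have $\mathbf{U}^{\bm{\mathcal{L}}}(H)=\hat{L}(G_H)$; in particular $\hat{\nu}_1(H)=\hat{\nu}_1(G_H)$. Moreover $H$ simple and deeply connected forces $G_H$ to be simple and connected, so all the hypotheses needed for a graph-theoretic bound on $\hat{\nu}_1(G_H)$ are in place.

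The second step is to match every quantity in the statement with its analogue in $G_H$. For each $S\in I(H)$ the degree of the vertex $S$ in $G_H$ equals $d^*_H(S)$, and $N_H(S)$ is precisely the neighborhood of $S$ in $G_H$; hence $t(S)=\sum_{S'\in N_H(S)}d^*_H(S')$ is the $2$-degree of $S$ in $G_H$. The only hypothesis that still needs translating is the triangle condition. First I would record the correspondence between exact cycles of length $3$ in $H$ and triangles in $G_H$: a triangle $S_0S_1S_2$ of $G_H$ arises exactly from an exact $3$-cycle $S_0,e_1,S_1,e_2,S_2,e_3,S_0$ of $H$ with $\{S_0,S_1\}\in\tau(e_1)$, $\{S_1,S_2\}\in\tau(e_2)$ and $\{S_2,S_0\}\in\tau(e_3)$. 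Consequently the number of triangles of $G_H$ through a fixed edge $\{S,S'\}$ equals the number of common neighbors of $S$ and $S'$, i.e.\ the number of exact $3$-cycles of $H$ realising $\{S,S'\}$ as one of their partitions. Using this dictionary I would argue that the hypothesis forces every edge of $G_H$ to lie in at least $r$ triangles.

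With the reduction complete, the asserted inequality is exactly the corresponding bound on $\hat{\nu}_1(G_H)$ for a simple connected graph each of whose edges lies in at least $r$ triangles, applied to $G_H$ and transported back via $\hat{\nu}_1(H)=\hat{\nu}_1(G_H)$; here I would invoke the relevant normalized-Laplacian spectral-radius result from~\cite{cvetko}. For the extremal analysis, the equality graph for that result is $K_{r+2}$, and Lemma~\ref{Knarg}(iii) gives $G_H\cong K_{r+2}$ if and only if $H\cong K_{r+2}$, so equality in the hypergraph bound holds precisely for $H\cong K_{r+2}$.

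The hard part will be the triangle-condition translation in the middle step. Because a single hyperedge $e$ can admit several distinct $2$-partitions, each yielding a different edge of $G_H$, the phrase ``$e$ belongs to $r$ exact $3$-cycles'' does not automatically place $r$ triangles on every edge of $G_H$ coming from $e$; care is needed to make the equivalence with ``every edge of $G_H$ lies in at least $r$ triangles'' precise, most cleanly by reading the hypothesis at the level of the partitions $\{S,S'\}\in\tau(H)$, which are exactly the edges of $G_H$. Once this is settled, the rest is a direct appeal to the cited graph bound together with Lemma~\ref{Knarg}(iii).
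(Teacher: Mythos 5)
Your proposal matches the paper's proof essentially verbatim: the paper likewise reduces to the simple connected graph $G_H$, invokes the corresponding graph bound --- which it takes from \cite[Theorem~3.2]{NLbound} rather than \cite{cvetko} --- and settles the equality case via Lemma~\ref{Knarg}(iii). The triangle-translation subtlety you flag (a hyperedge with several $2$-partitions need not place $r$ triangles on each edge of $G_H$ it generates) is real, but the paper glosses over it, asserting only that an exact $3$-cycle in $H$ becomes a triangle in $G_H$.
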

\begin{proof}
	Since $H$ is simple and deeply connected, it follows that $G_H$ is simple and connected. Now, the result follows from~\cite[Theorem~$3.2$]{NLbound}, Lemma~\ref{Knarg}$(iii)$ and from the fact that an exact cycle of length $3$ in $H$ becomes a triangle in $G_H$. 
\end{proof}
\begin{thm}
	Let $H$ be a simple deeply connected hypergraph with $e$-index $k$. Then we have the following.
	\begin{itemize}
		\item [(i)] $\displaystyle\hat{\nu}_{k-1}(H)\geq \frac{1}{ESD(H)vol(H)}$;
		\item [(ii)] If $ESD(H)\geq 4$, then  $\displaystyle\hat{\nu}_{k-1}(H)\leq 1-2\frac{\sqrt{\hat{d^*}(H)-1}}{\hat{d^*}(H)}\left(1-\frac{2}{ESD(H)}\right)+\frac{2}{ESD(H)}$.
	\end{itemize} 
\end{thm}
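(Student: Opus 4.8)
The plan is to transfer both inequalities to the associated graph $G_H$, exactly as in the earlier theorems of this section. Since $H$ is simple and deeply connected, $G_H$ is a simple connected graph, and by Note~\ref{isolated} we have $\mathbf{U}^{\bm{\mathcal{L}}}(H)=\hat{L}(G_H)$; hence $\hat{\nu}_i(H)=\hat{\nu}_i(G_H)$ for every $i$. In particular, because $G_H$ is connected, $\hat{\nu}_k(H)=0$ is a simple eigenvalue and $\hat{\nu}_{k-1}(H)$ is precisely the smallest nonzero normalized Laplacian eigenvalue of $G_H$, which is the quantity the cited graph bounds control.

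First I would match the three hypergraph invariants with their graph counterparts. As already noted, the degree of a vertex $S$ in $G_H$ equals $d^*_H(S)$; therefore $vol(G_H)=\sum_{S\in I(H)}d^*_H(S)=vol(H)$ and $\Delta(G_H)=\max_{S\in I(H)}d^*_H(S)=\hat{d^*}(H)$. Moreover, as established in the diameter theorems of Section~\ref{Lsec4}, the exact set distance between $S,S'\in I(H)$ equals the graph distance $d_{G_H}(S,S')$, so $ESD(H)=\mathrm{diam}(G_H)$. Consequently the hypothesis $ESD(H)\geq 4$ of part~(ii) is exactly the condition $\mathrm{diam}(G_H)\geq 4$ under which the graph upper bound is valid.

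With these identifications in hand, part~(i) is the standard lower bound $\hat{\nu}_{k-1}(G_H)\geq \frac{1}{\mathrm{diam}(G_H)\,vol(G_H)}$ for the smallest nonzero normalized Laplacian eigenvalue of a connected graph, and part~(ii) is the corresponding diameter--maximum-degree upper bound valid when $\mathrm{diam}(G_H)\geq 4$; both are standard results for the normalized Laplacian (see \cite{chung bk}). Substituting $vol(G_H)=vol(H)$, $\Delta(G_H)=\hat{d^*}(H)$, and $\mathrm{diam}(G_H)=ESD(H)$ into these two inequalities yields the stated forms verbatim.

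I do not anticipate a substantive obstacle: the whole argument is a dictionary translation through $\mathbf{U}^{\bm{\mathcal{L}}}(H)=\hat{L}(G_H)$. The only care needed is to confirm that $vol$, $\hat{d^*}$, and $ESD$ correspond exactly to the graph quantities entering the cited bounds, and that the indexing is consistent so that $\hat{\nu}_{k-1}$ is indeed the relevant nonzero eigenvalue of the connected graph $G_H$.
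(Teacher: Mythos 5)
Your proposal is correct and follows essentially the same route as the paper's own proof: translate every quantity to the associated graph via $\mathbf{U}^{\bm{\mathcal{L}}}(H)=\hat{L}(G_H)$, identify $vol(H)$, $\hat{d^*}(H)$, and $ESD(H)$ with $vol(G_H)$, $\Delta(G_H)$, and the diameter of $G_H$, and then invoke the two standard normalized-Laplacian bounds from \cite{chung bk} (Lemmas~1.9 and~1.14) for the connected graph $G_H$. Your additional care about the eigenvalue indexing (that $\hat{\nu}_{k-1}(H)$ is the smallest nonzero eigenvalue since $G_H$ is connected) is a point the paper leaves implicit, but it is the same argument.
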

\begin{proof}
		Since $H$ is simple and deeply connected, $G_H$ is simple and connected. Also, notice that $ESD(H)$ is the same as the diameter of $G_H$.
Part~$(i)$ directly follows from~\cite[Lemma~1.9]{chung bk}.
	 It is not hard to observe that $\Delta(G_H)=\hat{d^*}(H)$. Thus, part~$(ii)$ follows from~\cite[Lemma~1.14]{chung bk}.
\end{proof}
\begin{thm}
	Let $H$ be a simple deeply connected hypergraph. Then \[\hat{\nu}_{1}(H)\geq \displaystyle\frac{2|\tau(H)|}{2|\tau(H)|-\Delta(H)}.\]
\end{thm}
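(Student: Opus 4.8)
The plan is to transport everything to the associated graph $G_H$ and then prove the corresponding inequality for graphs by a single well-chosen test function in the Rayleigh quotient of $\hat{L}(G_H)$. Since $H$ is simple and deeply connected, $G_H$ is a simple connected graph, and by Note~\ref{isolated} we have $\mathbf{U}^{\bm{\mathcal{L}}}(H)=\hat{L}(G_H)$, so $\hat{\nu}_1(H)=\lambda_{\max}(\hat{L}(G_H))$. Two further dictionary entries are needed. First, each element of $\tau(H)$ is an edge of $G_H$ and conversely, whence $|E(G_H)|=|\tau(H)|$ and $vol(G_H)=2|\tau(H)|$. Second, the degree of a vertex $S$ of $G_H$ equals $d^*_H(S)$; in particular the singleton $\{v_0\}$ attached to a maximum-degree vertex $v_0$ of $H$ has degree $d^*_H(\{v_0\})=d_H(\{v_0\})=\Delta(H)$ in $G_H$, since for a singleton the modified unified degree reduces to the ordinary vertex degree ($H$ being loopless). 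Thus it suffices to establish, for a connected simple graph $G$ and any vertex $w$ of degree $d_w$, the inequality $\lambda_{\max}(\hat{L}(G))\ge vol(G)/(vol(G)-d_w)$, and then to specialize to $G=G_H$ and $w=\{v_0\}$.

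For the graph inequality I would use the variational formula
\[
\lambda_{\max}(\hat{L}(G))=\max_{f\neq 0}\frac{\displaystyle\sum_{\{u,v\}\in E(G)}(f(u)-f(v))^2}{\displaystyle\sum_{v}d_v\,f(v)^2},
\]
which follows from $\hat{L}(G)=D(G)^{-\frac12}L(G)D(G)^{-\frac12}$ after the substitution $x=D(G)^{\frac12}f$ (legitimate because $G$ connected with at least two vertices has no isolated vertex). I would then insert the one-parameter test function $f$ given by $f(w)=1-t$ and $f(v)=1$ for $v\neq w$. Only the edges incident to $w$ contribute to the numerator, which equals $d_w t^2$, while the denominator is $d_w(1-t)^2+\bigl(vol(G)-d_w\bigr)$. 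Optimizing the resulting rational function of $t$ (its interior critical point is $t=vol(G)/d_w$) gives the value $vol(G)/(vol(G)-d_w)$, which is therefore a valid lower bound for $\lambda_{\max}(\hat{L}(G))$.

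Combining the two steps with $w=\{v_0\}$ yields exactly
\[
\hat{\nu}_1(H)\ge \frac{vol(G_H)}{vol(G_H)-\Delta(H)}=\frac{2|\tau(H)|}{2|\tau(H)|-\Delta(H)},
\]
as claimed. Alternatively one may center the test function at a vertex of $G_H$ of maximum degree $\hat{d^*}(H)=\Delta(G_H)$, obtaining the sharper bound with $\hat{d^*}(H)$ in place of $\Delta(H)$; the stated form then follows since $x\mapsto vol(G_H)/(vol(G_H)-x)$ is increasing and $\hat{d^*}(H)\ge\Delta(H)$. I expect the only delicate points to be bookkeeping rather than conceptual: justifying the degree translation so that the $\Delta(H)$ of the statement is realized as an honest vertex degree of $G_H$ (handled by the singleton $\{v_0\}$), and checking that the denominator never vanishes, i.e. $vol(G_H)>\Delta(H)$, which holds as soon as $G_H$ has at least two vertices, equivalently $H$ is non-trivial — a hypothesis implicitly required for the right-hand side to be defined. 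The optimization over $t$ is routine, and the underlying graph estimate is in any case available from the normalized-Laplacian bounds in \cite{chung bk}.
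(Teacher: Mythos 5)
Your proof is correct, and its first half --- passing to $G_H$ via $\mathbf{U}^{\bm{\mathcal{L}}}(H)=\hat{L}(G_H)$, identifying $|E(G_H)|=|\tau(H)|$ (so $vol(G_H)=2|\tau(H)|$), and realizing $\Delta(H)$ as an honest vertex degree of $G_H$ --- is exactly the paper's reduction. Where you genuinely diverge is at the graph-level inequality: the paper simply cites Corollary~3.6 of \cite{NLbound} (``for a simple connected graph $G$ with $m$ edges, $\hat{\nu}_{1}(G)\geq 2m/(2m-\Delta(G))$''), whereas you prove that estimate from scratch by inserting the one-parameter test function $f(w)=1-t$, $f\equiv 1$ elsewhere, into the Rayleigh quotient of $\hat{L}(G)$ and optimizing. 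Your computation checks out: the numerator is $d_w t^2$, the denominator $d_w(1-t)^2+vol(G)-d_w$, and at $t=vol(G)/d_w$ the quotient equals $vol(G)/(vol(G)-d_w)$; note that any choice of $t$ already gives a valid lower bound, so you do not even need to verify that the critical point is a global maximum. What the citation buys the paper is brevity; what your derivation buys is self-containedness, plus a more careful degree dictionary: the paper asserts $\Delta(H)=\Delta(G_H)$ without justification, while your argument needs only the one-sided fact that the singleton $\{v_0\}$ of a maximum-degree vertex of $H$ has degree $d^*_H(\{v_0\})=\Delta(H)$ in $G_H$. (In fact equality holds: for $|S|>1$ every edge containing $S$ contains each $v\in S$, so $d^*_H(S)\leq d_H(S)\leq\Delta(H)$; this also shows your ``sharper'' variant with $\hat{d^*}(H)$ coincides with the stated bound.) Finally, both you and the paper implicitly require $H$ to be non-trivial so that $|\tau(H)|>0$ and $2|\tau(H)|-\Delta(H)>0$; you flag this corner case explicitly, the paper does not.
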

\begin{proof}
	It is clear that the number of edges in $G_H$ equals $|\tau(H)|$.
	Notice that $\Delta(H)=\Delta(G_H)$. So, the result follows from~\cite[Corollary 3.6]{NLbound}: ``Let $G$ be a simple connected graph with $m$ edges. Then $\hat{\nu}_{1}(G)\geq \frac{2m}{2m-\Delta(G)}$". 
\end{proof}
\begin{thm} Let $H$ be a simple deeply connected hypergraph with $e$-index $k$.
	\begin{itemize}
		\item[(i)] 	If $H\neq K_{k}$, then 
		$\hat{\nu}_{k-1}(H)=1$ if and only if $H$ is a complete multipartite graph;		
			
		\item[(ii)]  $\hat{\nu}_{2}(H)\geq 1$; the equality holds if and only if $H$ is a complete bipartite graph.
	\end{itemize}
\end{thm}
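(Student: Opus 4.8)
The plan is to transfer both statements to the associated graph $G_H$ and invoke known characterizations of the normalized Laplacian spectrum, in the same spirit as the earlier results of this section. Since $H$ is simple and deeply connected, $G_H$ is a simple connected graph on $k$ vertices, and by Note~\ref{isolated} we have $\mathbf{U}^{\bm{\mathcal{L}}}(H)=\hat{L}(G_H)$; consequently $\hat{\nu}_i(H)=\hat{\nu}_i(G_H)$ for every $i$, so every spectral statement about $H$ is equivalent to the corresponding statement about $G_H$.

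For part~(i), I would first translate the hypotheses: by Lemma~\ref{Knarg}$(iii)$, the condition $H\neq K_k$ is equivalent to $G_H\neq K_k$, and by Lemma~\ref{Knarg}$(iv)$, ``$H$ is a complete multipartite graph'' is equivalent to ``$G_H$ is a complete multipartite graph''. The claim then reduces to the graph-theoretic fact that, for a connected simple graph $G_H\neq K_k$, the second smallest normalized Laplacian eigenvalue $\hat{\nu}_{k-1}(G_H)$ equals $1$ if and only if $G_H$ is complete multipartite, which I would cite from~\cite{cvetko} (or \cite{chung bk}). Note that the inequality half of this is already available: Theorem~\ref{thm(v)}$(ii)$ gives $\hat{\nu}_{k-1}(H)\leq 1$ whenever $H\neq K_k$, so only the equality case needs the cited characterization.

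For part~(ii), the same reduction applies. A complete bipartite graph is precisely a complete $2$-partite graph, so Lemma~\ref{Knarg}$(iv)$ with $r=2$ identifies ``$H$ is a complete bipartite graph'' with ``$G_H$ is a complete bipartite graph''. The statement then becomes the standard fact that, for a connected simple graph, the second largest normalized Laplacian eigenvalue $\hat{\nu}_2(G_H)$ is at least $1$, with equality if and only if $G_H$ is complete bipartite; I would cite this from~\cite{chung bk}. As a sanity check, the complete bipartite normalized Laplacian spectrum $2,1,\dots,1,0$ indeed has $\hat{\nu}_2=1$, which is consistent with the equality case.

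The logical reduction is routine once $\mathbf{U}^{\bm{\mathcal{L}}}(H)=\hat{L}(G_H)$ and Lemma~\ref{Knarg} are in hand, so the main obstacle I anticipate is bibliographic and notational: locating the precise references for the two graph characterizations and checking that the eigenvalue ordering used there (typically smallest-to-largest in the normalized Laplacian literature) is correctly matched against the convention $\hat{\nu}_1(H)\geq\cdots\geq\hat{\nu}_k(H)$ adopted here, so that ``second smallest'' ($\hat{\nu}_{k-1}$) in part~(i) and ``second largest'' ($\hat{\nu}_2$) in part~(ii) are aligned with the eigenvalue $1$ sitting at the correct end of the spectrum.
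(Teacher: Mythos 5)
Your proposal is correct and follows essentially the same route as the paper: reduce to the associated graph via $\mathbf{U}^{\bm{\mathcal{L}}}(H)=\hat{L}(G_H)$, translate the hypotheses through Lemma~3.3~$(iii)$,~$(iv)$, and invoke the known graph characterizations (the paper cites these as Theorems~3.11 and~3.12 of Li--Guo--Shiu rather than \cite{chung bk} or \cite{cvetko}, but that is only a bibliographic difference). Your extra observations — that Theorem~5.3$(ii)$ already gives the inequality half of part~(i), and the sanity check on the spectrum of a complete bipartite graph — are accurate but not needed.
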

\begin{proof}
	Since $H$ is a simple and deeply connected hypergraph, $G_H$ is a simple connected graph.
	
		Part~$(i)$ follows from parts~$(iii)$,  $(iv)$ of Lemma~\ref{Knarg} and from~\cite[Theorem~3.11]{NLbound}: ``Let $G(\neq K_n)$ be a simple connected graph on $n$ vertices. Then $\hat{\nu}_{n-1}(G)=1$ if and only if $G$ is a complete multipartite graph".
		
		Part~$(ii)$ follows from Lemma~\ref{Knarg}$(iv)$ and from \cite[Theorem~3.12]{NLbound}: ``Let $G$ be a connected graph on $n$ vertices. Then $\hat{\nu}_{2}(G)\geq 1$. The equality holds if
		and only if $G$ is a complete bipartite graph".
\end{proof}


\subsection{Unified Cheeger constant, exact set distance and unified normalized Laplacian eigenvalues}
In this subsection, we introduce the unified Cheeger constant of a hypergraph, and the exact set distance between two subsets of $I(H)$; and obtain some spectral bounds on these invariants.
\begin{defn}\normalfont
	Let $H$ be a simple hypergraph. Let $\mathcal{X}$ and $\mathcal{Y}$ be non-empty subsets of $I(H)$. We denote $\hat{E}(\mathcal{X},\mathcal{Y})=\{\{S, S'\}\in \tau(H)\colon S\in \mathcal{X}, S'\in \mathcal{Y}~\text{or}~S\in \mathcal{Y}, S'\in \mathcal{X}\}$. Let $\mathcal{X}^c:=I(H)\backslash \mathcal{X}$. Then we denote $\hat{E}(\mathcal{X},\mathcal{X}^c)$ simply by $\hat{E}(\mathcal{X})$. 
	
	For a non-empty proper subset $\mathcal{X}$ of $I(H)$, we define $$uc(\mathcal{X})=\displaystyle\frac{|\hat{E}(\mathcal{X})|}{\min \{vol_H(\mathcal{X}), vol_H(\mathcal{X}^c)\}}.$$ 
	
	 We define the \textit{unified Cheeger constant} of $H$ as $$uc(H)=\underset{\mathcal{X}\subset I(H)}{\min}uc(\mathcal{X}).$$ 
\end{defn}
Notice that each edge in $\hat{E}(\mathcal{X})$ of $H$ contributes to the values $vol_H(\mathcal{X})$ and $vol_H(\mathcal{X}^c)$. So we have $\min \{vol_H(\mathcal{X}), vol_H(\mathcal{X}^c)\}\geq |\hat{E}(\mathcal{X})|$. Therefore, for any simple hypergraph, $uc(X)\leq 1$ and so, $uc(H)\leq 1$.
\begin{thm}
	Let $H$ be a simple hypergraph with $e$-index $k$ and let $\nu^*(H)=\underset{i\neq k}{\max}|1-\hat{\nu}_i(H)|$.
	Let $\mathcal{X}$ and $\mathcal{Y}$ be two non-empty subsets of $I(H)$.  Then we have the following.
	\begin{itemize}
		\item[(i)] $\displaystyle\left||\hat{E}(\mathcal{X},\mathcal{Y})|-\frac{vol_H(\mathcal{X})vol_H(\mathcal{Y})}{vol(H)}\right|\leq\nu^*(H)\sqrt{vol_H(\mathcal{X})vol_H(\mathcal{Y})}$;
\item[(ii)] $\displaystyle\left||\hat{E}(\mathcal{X},\mathcal{Y})|-\frac{vol_H(\mathcal{X})vol_H(\mathcal{Y})}{vol(H)}\right|\leq\nu^*(H)\frac{\sqrt{vol_H(\mathcal{X})vol_H(\mathcal{Y})vol_H(\mathcal{X}^c)vol_H(\mathcal{Y}^c)}}{vol(H)}$;
\item[(iii)] $\displaystyle\left||\hat{E}(\mathcal{X},\mathcal{X})|-\frac{vol_H(\mathcal{X})^2}{vol(H)}\right|\leq\nu^*(H)\frac{\sqrt{vol_H(\mathcal{X})vol_H(\mathcal{X}^c)}}{vol(H)}\leq\nu^*(H)vol_H(\mathcal{X})$;
	\end{itemize}
\end{thm}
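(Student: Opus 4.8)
The plan is to transfer the statement to the associated graph $G_H$ and then run the standard spectral discrepancy (``expander mixing'') argument. Since $H$ is simple it is loopless, so Note~\ref{isolated} gives $\mathbf{U}^{\bm{\mathcal{L}}}(H)=\hat L(G_H)$; hence $\hat\nu_1(H)\ge\cdots\ge\hat\nu_k(H)$ are exactly the normalized Laplacian eigenvalues of $G_H$, and $\nu^*(H)=\max_{i\ne k}|1-\hat\nu_i(H)|$ is the quantity $\bar\lambda$ of $G_H$ in the sense of Chung. Moreover $G_H$ is a simple graph on $I(H)$ whose edges are the members of $\tau(H)$, the degree of a vertex $S$ in $G_H$ equals $d^*_H(S)$, and therefore $vol_H(\mathcal X)=vol_{G_H}(\mathcal X)$, $vol(H)=vol(G_H)$, and $|\hat E(\mathcal X,\mathcal Y)|$ is the number of edges of $G_H$ between $\mathcal X$ and $\mathcal Y$. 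Under this dictionary all three inequalities become the discrepancy inequalities for $G_H$ and follow from \cite{chung bk}; isolated vertices of $H$ (those with $d^*_H(S)=0$) may be deleted, since they affect neither the volumes nor $\hat E$.

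To make the argument self-contained, put $D=\mathbf{U}^{\mathbf D}(H)$ and $A=\mathbf{U}(H)$, and work with the normalized adjacency operator $\hat A=D^{-1/2}AD^{-1/2}=I-\mathbf{U}^{\bm{\mathcal{L}}}(H)$, whose eigenvalues are $1-\hat\nu_i(H)$. Choose an orthonormal eigenbasis $\phi_1,\dots,\phi_k$ with $\hat A\phi_i=(1-\hat\nu_i(H))\phi_i$; by Lemma~\ref{Ob}$(i)$ one has $DJ_{k\times1}=AJ_{k\times1}$, so the distinguished eigenvector for the eigenvalue $1$ (i.e. for $\hat\nu_k(H)=0$) is $\phi_k=D^{1/2}J_{k\times1}/\sqrt{vol(H)}$. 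For $\mathcal Z\subseteq I(H)$ write $\mathbf 1_{\mathcal Z}$ for its characteristic vector and $\psi_{\mathcal Z}=D^{1/2}\mathbf 1_{\mathcal Z}$; then $\|\psi_{\mathcal Z}\|^2=vol_H(\mathcal Z)$, $\langle\psi_{\mathcal Z},\phi_k\rangle=vol_H(\mathcal Z)/\sqrt{vol(H)}$, and (for disjoint $\mathcal X,\mathcal Y$) the edge count is recovered as the bilinear form $|\hat E(\mathcal X,\mathcal Y)|=\mathbf 1_{\mathcal X}^{T}A\,\mathbf 1_{\mathcal Y}=\psi_{\mathcal X}^{T}\hat A\,\psi_{\mathcal Y}$.

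Expanding $\psi_{\mathcal X}=\sum_i a_i\phi_i$ and $\psi_{\mathcal Y}=\sum_i b_i\phi_i$, the $\phi_k$-term of $\psi_{\mathcal X}^{T}\hat A\,\psi_{\mathcal Y}$ equals $a_kb_k=vol_H(\mathcal X)vol_H(\mathcal Y)/vol(H)$, so $|\hat E(\mathcal X,\mathcal Y)|-vol_H(\mathcal X)vol_H(\mathcal Y)/vol(H)=\sum_{i\ne k}a_ib_i(1-\hat\nu_i(H))$. Taking absolute values and using Cauchy--Schwarz bounds the right-hand side by $\nu^*(H)\big(\sum_{i\ne k}a_i^2\big)^{1/2}\big(\sum_{i\ne k}b_i^2\big)^{1/2}$. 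Then (i) follows from the crude estimate $\sum_{i\ne k}a_i^2\le\|\psi_{\mathcal X}\|^2=vol_H(\mathcal X)$; (ii) follows from the exact identity $\sum_{i\ne k}a_i^2=\|\psi_{\mathcal X}\|^2-a_k^2=vol_H(\mathcal X)vol_H(\mathcal X^c)/vol(H)$ together with its analogue for $\mathcal Y$; and (iii) is the case $\mathcal Y=\mathcal X$ of (ii), whose bound then reads $\nu^*(H)\,vol_H(\mathcal X)vol_H(\mathcal X^c)/vol(H)$, so that $vol_H(\mathcal X^c)\le vol(H)$ yields the final estimate $\nu^*(H)\,vol_H(\mathcal X)$.

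The step needing the most care is the identity $|\hat E(\mathcal X,\mathcal Y)|=\mathbf 1_{\mathcal X}^{T}\mathbf{U}(H)\mathbf 1_{\mathcal Y}$ when $\mathcal X$ and $\mathcal Y$ overlap: a pair $\{S,S'\}\in\tau(H)$ with both parts in $\mathcal X\cap\mathcal Y$ is weighted twice by the bilinear form but counted once in $\hat E$, so one must either restrict to disjoint $\mathcal X,\mathcal Y$ or fix the edge-counting convention (as in \cite{chung bk}) consistently with the instance $\mathcal Y=\mathcal X$ used in (iii). Once that convention is pinned down, the remainder is the routine eigenvector bookkeeping above, and the reduction through Note~\ref{isolated} makes the whole theorem a direct consequence of the discrepancy inequalities for $G_H$.
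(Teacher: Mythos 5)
Your proposal is correct and takes essentially the same route as the paper: the paper's proof is exactly the reduction you give in your first paragraph (noting $vol_H(\mathcal{S})=vol(\mathcal{S})$, $|\hat{E}(\mathcal{X},\mathcal{Y})|=|E(\mathcal{X},\mathcal{Y})|$, and $\mathbf{U}^{\bm{\mathcal{L}}}(H)=\hat{L}(G_H)$) followed by a citation of Theorems~5.1, 5.2 and Corollary~5.3 of~\cite{chung bk}, so your eigenvector expansion is just an unpacking of those cited results rather than a different argument. Your closing caveat about edge-counting conventions is a genuine subtlety the paper glosses over---its $\hat{E}(\mathcal{X},\mathcal{Y})$ counts each edge once, whereas Chung's bilinear-form quantity counts edges inside $\mathcal{X}\cap\mathcal{Y}$ twice---so on that point your write-up is actually more careful than the paper's own proof.
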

\begin{proof}
	Notice that for any $\mathcal{S}\subset I(H)$,  $vol_H(\mathcal{S})=vol(\mathcal{S})$. Also, $|\hat{E}(\mathcal{X},\mathcal{Y})|=|E(\mathcal{X},\mathcal{Y})|$. Since the spectrum of $\mathbf{U}^{\bm{\mathcal{L}}}(H)$ and $\hat{L}(G_H)$ are the same, the parts (i), (ii) and (iii) follows from Theorems~5.1, 5.2 and Corollary 5.3 of~\cite{chung bk}, respectively.
\end{proof}

\begin{thm}
	Let $H$ be a simple, deeply connected hypergraph. Then for any non-empty subset $\mathcal{S}\subseteq I(H)$, we have
	$$\hat{\nu}_1(H)\geq\displaystyle\frac{2|\tau(H)|}{vol_H(\mathcal{S})(2\tau(H)-vol_H(\mathcal{S}))}.$$
\end{thm}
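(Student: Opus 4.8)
The plan is to transfer the statement to the associated graph $G_H$ and then exhibit a single test vector in the Rayleigh-quotient characterisation of the largest eigenvalue. Since $H$ is simple and deeply connected, $G_H$ is a simple connected graph, and by Note~\ref{isolated} we have $\mathbf{U}^{\bm{\mathcal{L}}}(H)=\hat{L}(G_H)$, so $\hat{\nu}_1(H)$ is the largest normalized Laplacian eigenvalue of $G_H$. Under the dictionary already established, each element of $\tau(H)$ is an edge of $G_H$, so $|\tau(H)|=|E(G_H)|$; the modified unified degree $d^*_H(S)$ is the degree of $S$ in $G_H$, so $vol_H(\mathcal{S})=vol(\mathcal{S})$ and $vol(H)=vol(G_H)=2|E(G_H)|=2|\tau(H)|$; consequently $2|\tau(H)|-vol_H(\mathcal{S})=vol(H)-vol_H(\mathcal{S})=vol_H(\mathcal{S}^c)$. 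Here $\mathcal{S}$ must be understood as a proper non-empty subset, since otherwise $vol_H(\mathcal{S}^c)=0$ and the right-hand side is undefined. Thus the claim is equivalent to $\hat{\nu}_1(H)\geq vol(H)/\bigl(vol_H(\mathcal{S})\,vol_H(\mathcal{S}^c)\bigr)$.

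First I would invoke the Rayleigh-quotient description of the top normalized Laplacian eigenvalue: writing $f\colon I(H)\to\mathbb{R}$, one has
\[
\hat{\nu}_1(H)=\max_{f\neq 0}\ \frac{\sum_{\{S,S'\}\in\tau(H)}\bigl(f(S)-f(S')\bigr)^2}{\sum_{S\in I(H)}d^*_H(S)\,f(S)^2},
\]
which needs no orthogonality constraint, because $\mathbf{U}^{\bm{\mathcal{L}}}(H)$ is positive semi-definite and its largest eigenvalue is the unconstrained maximum of this quotient (obtained after the substitution $g=\mathbf{U}^{\mathbf{D}}(H)^{1/2}f$). Any single nonzero $f$ therefore yields a lower bound. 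I would then plug in the balanced indicator
\[
f(S)=\begin{cases} 1/vol_H(\mathcal{S}), & S\in\mathcal{S},\\ -1/vol_H(\mathcal{S}^c), & S\in\mathcal{S}^c. \end{cases}
\]

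For this $f$ the denominator telescopes to $vol_H(\mathcal{S})^{-1}+vol_H(\mathcal{S}^c)^{-1}=vol(H)/\bigl(vol_H(\mathcal{S})\,vol_H(\mathcal{S}^c)\bigr)$, while only the crossing pairs in $\hat{E}(\mathcal{S})$ contribute to the numerator, each by $\bigl(vol_H(\mathcal{S})^{-1}+vol_H(\mathcal{S}^c)^{-1}\bigr)^2$; pairs lying inside $\mathcal{S}$ or inside $\mathcal{S}^c$ contribute $0$. Dividing, the quotient collapses to $|\hat{E}(\mathcal{S})|\cdot vol(H)/\bigl(vol_H(\mathcal{S})\,vol_H(\mathcal{S}^c)\bigr)$, and since $G_H$ is connected and $\mathcal{S}$ is a proper non-empty subset we have $|\hat{E}(\mathcal{S})|\geq 1$, which gives exactly the asserted bound. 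Alternatively, once the reduction to $G_H$ is made, the inequality is a known normalized-Laplacian estimate and may simply be cited from~\cite{chung bk} together with the translation of invariants above.

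The computation is entirely routine, so there is no serious analytic obstacle; the only real choices are recognising that the balanced indicator $f$ is the right test vector---so that the denominator produces precisely the combination $vol(H)/\bigl(vol_H(\mathcal{S})\,vol_H(\mathcal{S}^c)\bigr)$---and being careful that $\mathcal{S}$ is proper so the bound is meaningful. The connectivity hypothesis enters only through the harmless observation $|\hat{E}(\mathcal{S})|\geq 1$, and in fact the same argument establishes the sharper estimate $\hat{\nu}_1(H)\geq |\hat{E}(\mathcal{S})|\,vol(H)/\bigl(vol_H(\mathcal{S})\,vol_H(\mathcal{S}^c)\bigr)$ before that final bound is applied.
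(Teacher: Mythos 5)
Your proof is correct, but it is more self-contained than the paper's, whose entire argument is the reduction you carry out in your first paragraph---$G_H$ is simple and connected, $|E(G_H)|=|\tau(H)|$, $vol_H(\mathcal{S})=vol(\mathcal{S})$---followed immediately by a citation of Theorem~3.5 of~\cite{NLbound}, which is exactly the graph-level inequality. You instead prove that graph-level inequality from scratch: the unconstrained Rayleigh-quotient characterisation of the largest eigenvalue of $\hat{L}(G_H)$ (legitimate here because $G_H$, being connected and non-trivial, has no isolated vertices, so $\mathbf{U}^{\mathbf{D}}(H)^{1/2}$ is invertible and no orthogonality constraint is needed for the top eigenvalue), evaluated at the balanced indicator that equals $1/vol_H(\mathcal{S})$ on $\mathcal{S}$ and $-1/vol_H(\mathcal{S}^c)$ on $\mathcal{S}^c$. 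Your computation is right: the denominator is $vol(H)/\bigl(vol_H(\mathcal{S})\,vol_H(\mathcal{S}^c)\bigr)$, the numerator is $|\hat{E}(\mathcal{S})|$ times the square of that quantity, and connectivity of $G_H$ gives $|\hat{E}(\mathcal{S})|\geq 1$, which closes the argument. Your route buys three things the paper's citation does not: the argument can be checked without access to~\cite{NLbound}; it yields the sharper intermediate bound $\hat{\nu}_1(H)\geq |\hat{E}(\mathcal{S})|\,vol(H)/\bigl(vol_H(\mathcal{S})\,vol_H(\mathcal{S}^c)\bigr)$; and it surfaces a hypothesis the paper's statement glosses over, namely that $\mathcal{S}$ must be a \emph{proper} non-empty subset, since for $\mathcal{S}=I(H)$ one has $vol_H(\mathcal{S})=2|\tau(H)|$ and the right-hand side is undefined. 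What the paper's translate-and-cite scheme buys is brevity and uniformity with the surrounding section, where every bound is obtained the same way; indeed your closing remark that one could simply cite a known normalized-Laplacian estimate is precisely what the authors do, except that the source is~\cite{NLbound} rather than~\cite{chung bk}.
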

\begin{proof}
		The number of edges in $G_H$ equals $|\tau(H)|$.
	Also, notice that $vol_H(\mathcal{S})=vol(\mathcal{S})$. So, the result directly follows from~\cite[Theorem~3.5]{NLbound}.
\end{proof}
\begin{thm}
	Let $H$ be a simple deeply connected hypergraph with $e$-index $k$. Then we have the following.
	\begin{itemize}
		\item[(i)] $uc(H)^2<2\hat{\nu}_{k-1}(H)\leq 4 uc(H)$;
		\item[(ii)] $\hat{\nu}_{k-1}(H)>1-\sqrt{1-uc(H)^2}$.
	\end{itemize}
\end{thm}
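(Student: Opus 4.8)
The plan is to reduce the whole statement to the classical Cheeger inequality for the normalized Laplacian of the associated graph $G_H$, in the same spirit as the other results of this section. First I would note that, since $H$ is simple and deeply connected, $G_H$ is a simple connected graph on $k$ vertices, and that by Note~\ref{isolated} we have $\mathbf{U}^{\bm{\mathcal{L}}}(H)=\hat{L}(G_H)$. Hence the two matrices share their spectrum, so $\hat{\nu}_{k-1}(H)$ is exactly the smallest nonzero eigenvalue of $\hat{L}(G_H)$ (the eigenvalue $0=\hat{\nu}_k(H)$ being simple because $G_H$ is connected).

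The one genuine verification is that the unified Cheeger constant of $H$ coincides with the graph Cheeger constant $h(G_H)$. For a non-empty proper subset $\mathcal{X}\subset I(H)=V(G_H)$, I would observe that each element of $\hat{E}(\mathcal{X})$ is a $2$-partition $\{S,S'\}\in\tau(H)$ with $S\in\mathcal{X}$ and $S'\in\mathcal{X}^c$, and these are precisely the edges of $G_H$ with one endpoint in $\mathcal{X}$ and the other in $\mathcal{X}^c$; thus $|\hat{E}(\mathcal{X})|=|E(\mathcal{X},\mathcal{X}^c)|$. Likewise, since $d^*_H(S)$ is the degree of $S$ in $G_H$, one has $vol_H(\mathcal{X})=vol(\mathcal{X})$. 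Therefore $uc(\mathcal{X})=h_{G_H}(\mathcal{X})$ for every such $\mathcal{X}$, and taking the minimum over all $\mathcal{X}$ gives $uc(H)=h(G_H)$.

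Writing $\lambda:=\hat{\nu}_{k-1}(H)$ and $h:=uc(H)=h(G_H)$, both parts then follow at once from Chung's Cheeger inequality \cite{chung bk}, which states $2h\geq\lambda>1-\sqrt{1-h^2}$ for a connected graph. The strict right-hand bound is exactly part~(ii). The inequality $2h\geq\lambda$ rearranges to the upper bound $2\hat{\nu}_{k-1}(H)\leq 4\,uc(H)$ of part~(i). For the remaining lower bound I would use the elementary estimate $1-\sqrt{1-h^2}\geq h^2/2$, valid for $0\leq h\leq 1$ (recall $uc(H)\leq 1$ was noted just before the statement), so that part~(ii) immediately upgrades to $uc(H)^2<2\hat{\nu}_{k-1}(H)$.

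I do not expect a substantial obstacle: the entire argument is carried by the identification $\mathbf{U}^{\bm{\mathcal{L}}}(H)=\hat{L}(G_H)$ together with the correspondence $uc(H)=h(G_H)$, after which Chung's inequality finishes everything. The only point demanding a little care is to derive part~(i)'s lower bound from part~(ii) via the estimate $1-\sqrt{1-h^2}\geq h^2/2$, rather than trying to establish $uc(H)^2<2\hat{\nu}_{k-1}(H)$ directly.
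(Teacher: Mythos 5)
Your proposal is correct and follows essentially the same route as the paper: identify $\mathbf{U}^{\bm{\mathcal{L}}}(H)=\hat{L}(G_H)$ and $uc(H)=h(G_H)$, then invoke Chung's Cheeger bounds for the connected graph $G_H$. The only cosmetic difference is that you derive the lower bound $uc(H)^2<2\hat{\nu}_{k-1}(H)$ from part~(ii) via the estimate $1-\sqrt{1-h^2}\geq h^2/2$, whereas the paper cites Chung's Theorem~2.2 directly; both are valid.
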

\begin{proof}
	Since $uc(H)=h(G_H)$ and $\mathbf{U}^{\bm{\mathcal{L}}}(H)=\hat{L}(G_H)$, the results (i) and (ii) directly follow from Lemma~2.1, Theorems~2.2 and~2.3 of~\cite{chung bk}.
\end{proof}
\begin{defn}\normalfont
	Let $\mathcal{X}$ and $\mathcal{Y}$ be two non-empty subsets of $I(H)$. We define the {\it exact set distance between $\mathcal{X}$ and $\mathcal{Y}$} as the minimum of the exact set distance between each $S\in \mathcal{X}$ and $S'\in\mathcal{Y}$. We denote it by $esd_H(\mathcal{X}, \mathcal{Y})$, i.e.,
	$esd_H(\mathcal{X}, \mathcal{Y})=\min\{esd_H(S,S')\colon S\in \mathcal{X}, S'\in\mathcal{Y}\}$.
\end{defn}
\begin{thm}
	Let $H$ be a simple hypergraph with $e$-index $k$. 
	\begin{itemize} 
		\item [(1) ] Let $\mathcal{X}$ and $\mathcal{Y}$ be proper subsets of $I(H)$.
		\begin{itemize}
			\item[(i)]
			If $H\ncong K_k$, then $esd_H(\mathcal{X}, \mathcal{Y})\leq\displaystyle\left\lceil \frac{\log\sqrt{\frac{vol_H(\mathcal{X}^c)vol_H(\mathcal{Y}^c)}{vol_H(\mathcal{X})vol_H(\mathcal{Y})}}}{\log \frac{\hat{\nu}_{1}(H)+\hat{\nu}_{k-1}(H)}{\hat{\nu}_{1}(H)-\hat{\nu}_{k-1}(H)}}\right\rceil$;\\
			\item[(ii)]
			If $H\ncong K_k$, then $esd_H(\mathcal{X}, \mathcal{Y})\leq\displaystyle\left\lceil \frac{\cosh^{-1}\sqrt{\frac{vol_H(\mathcal{X}^c)vol_H(\mathcal{Y}^c)}{vol_H(\mathcal{X})vol_H(\mathcal{Y})}}}{\cosh^{-1} \frac{\hat{\nu}_{1}(H)+\hat{\nu}_{k-1}(H)}{\hat{\nu}_{1}(H)-\hat{\nu}_{k-1}(H)}}\right\rceil$.\\
		\end{itemize}
		
		\item [(2)]	 Let $\mathcal{X}_i$ be proper subsets of $I(H)$, $i=1,2,\dots,t$. 
		\begin{itemize}
			\item[(i)]	
			If $H\ncong K_k$, then $\underset{i\neq j}{\min}~esd_H(\mathcal{X}_i,\mathcal{X}_j)\leq	\underset{i\neq j}{\max}\displaystyle\left\lceil \frac{\log\sqrt{\frac{vol_H(\mathcal{X}_i^c)vol_H(\mathcal{X}_j^c)}{vol_H(\mathcal{X}_i)vol_H(\mathcal{X}_j)}}}{\log \frac{1}{1-\hat{\nu}_{k-t+1}(H)}}\right\rceil,$  whenever $1-\hat{\nu}_{k-t+1}(H)\geq \hat{\nu}_1(H)-1$;
			\item[(ii)] $\underset{i\neq j}{\min}~esd_H(\mathcal{X}_i,\mathcal{X}_j)\leq	\underset{i\neq j}{\max}\displaystyle\left\lceil \frac{\log\sqrt{\frac{vol_H(\mathcal{X}_i^c)vol_H(\mathcal{X}_j^c)}{vol_H(\mathcal{X}_i)vol_H(\mathcal{X}_j)}}}{\log \frac{\hat{\nu}_1(H)+\hat{\nu}_{k-t+1}(H)}{\hat{\nu}_1(H)-\hat{\nu}_{k-t+1}(H)}}\right\rceil$, if $\hat{\nu}_1(H)\neq\hat{\nu}_{k-t+1}(H)$;
			\item[(iii)] $\underset{i\neq j}{\min}~esd_H(\mathcal{X}_i,\mathcal{X}_j)\leq	\underset{1\leq j\leq t}{\min}\underset{i\neq j}{\max}\displaystyle\left\lceil \frac{\log\sqrt{\frac{vol_H(\mathcal{X}_i^c)vol_H(\mathcal{X}_j^c)}{vol_H(\mathcal{X}_i)vol_H(\mathcal{X}_j)}}}{\log \frac{\hat{\nu}_{j+1}(H)+\hat{\nu}_{k-t+j-1}(H)}{\hat{\nu}_{j+1}(H)-\hat{\nu}_{k-t+j-1}(H)}}\right\rceil$, if $\hat{\nu}_{j+1}(H)\neq\hat{\nu}_{k-t+j-1}(H)$;
		\end{itemize}
	\end{itemize}
\end{thm}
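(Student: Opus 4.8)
The plan is to transfer the entire statement to the associated graph $G_H$ and then quote the corresponding distance estimates of Chung. The bridge consists of three identifications already available in this section. First, by Note~\ref{isolated}, $\mathbf{U}^{\bm{\mathcal{L}}}(H)=\hat{L}(G_H)$, so the two matrices share their spectrum and hence $\hat{\nu}_i(H)=\hat{\nu}_i(G_H)$ for every $i$. Second, for any $S,S'\in I(H)$ the exact set distance $esd_H(S,S')$ coincides with the graph distance $d_{G_H}(S,S')$, whence for subsets $esd_H(\mathcal{X},\mathcal{Y})=\min\{d_{G_H}(S,S')\colon S\in\mathcal{X},\,S'\in\mathcal{Y}\}$ is exactly the distance between $\mathcal{X}$ and $\mathcal{Y}$ in $G_H$. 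Third, $vol_H(\mathcal{S})=vol(\mathcal{S})$ for every $\mathcal{S}\subseteq I(H)$, so $vol_H(\mathcal{X}^c)$ is the volume of the complement of $\mathcal{X}$ in $V(G_H)$. Together these rewrite every quantity on the right-hand sides verbatim in terms of $G_H$.

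First I would dispose of the hypothesis $H\ncong K_k$: by Lemma~\ref{Knarg}(iii) this is equivalent to $G_H\ncong K_k$, which is precisely the condition guaranteeing $\hat{\nu}_{k-1}(H)<\hat{\nu}_1(H)$, so that the logarithm (resp.\ $\cosh^{-1}$) in the denominators of Part~(1) is strictly positive and the bound is well defined. If moreover $\hat{\nu}_{k-1}(H)=0$ (equivalently $H$ is not deeply connected, so $G_H$ is disconnected) the denominator ratio equals $1$ and the right side is $+\infty$, making the inequality trivial; hence I may assume $G_H$ connected. With this in place, Parts~(1)(i) and (1)(ii) follow by applying Chung's two-subset distance inequalities from \cite{chung bk} to $G_H$: in Chung's ascending convention the smallest nonzero normalized Laplacian eigenvalue is our $\hat{\nu}_{k-1}(H)$ and the largest is our $\hat{\nu}_1(H)$, so the ratio $\frac{\hat{\nu}_1(H)+\hat{\nu}_{k-1}(H)}{\hat{\nu}_1(H)-\hat{\nu}_{k-1}(H)}$ is exactly Chung's $\frac{\lambda_{n-1}+\lambda_1}{\lambda_{n-1}-\lambda_1}$, and the two bounds transcribe term for term.

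For Part~(2) I would again pass to $G_H$ and invoke Chung's refinement for several subsets, in which the relevant eigenvalue is the $t$-th smallest nonzero one, i.e.\ our $\hat{\nu}_{k-t+1}(H)$ for (i)--(ii) and the pair $\hat{\nu}_{j+1}(H),\hat{\nu}_{k-t+j-1}(H)$ for the optimized form (iii). The side conditions $1-\hat{\nu}_{k-t+1}(H)\ge\hat{\nu}_1(H)-1$ in (i) and $\hat{\nu}_1(H)\neq\hat{\nu}_{k-t+1}(H)$ in (ii), and $\hat{\nu}_{j+1}(H)\neq\hat{\nu}_{k-t+j-1}(H)$ in (iii), are exactly the nondegeneracy hypotheses under which Chung's inequalities are stated, so once translated they apply unchanged, with $\underset{i\neq j}{\min}\,esd_H(\mathcal{X}_i,\mathcal{X}_j)$ reading off the graph-distance version of the same minimum in $G_H$.

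The routine part is the bookkeeping of these three dictionaries; the one point that genuinely needs care is the index matching between the paper's descending ordering $\hat{\nu}_1\ge\cdots\ge\hat{\nu}_k$ and Chung's ascending ordering $0=\lambda_0\le\cdots\le\lambda_{n-1}$, so that the correct eigenvalues appear in both the single-pair gap $(\hat{\nu}_1,\hat{\nu}_{k-1})$ and the multi-subset gaps $(\hat{\nu}_{j+1},\hat{\nu}_{k-t+j-1})$. An off-by-one slip here would silently corrupt the estimates, so I would verify each index against the corresponding statement in \cite{chung bk} before concluding.
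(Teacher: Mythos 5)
Your proposal is correct and follows essentially the same route as the paper: identify $\mathbf{U}^{\bm{\mathcal{L}}}(H)=\hat{L}(G_H)$, $esd_H(\mathcal{X},\mathcal{Y})$ with the subset distance in $G_H$, and $vol_H(\mathcal{S})$ with $vol(\mathcal{S})$, then invoke Lemma~\ref{Knarg}(iii) for the $H\ncong K_k$ hypothesis and Chung's subset-distance theorems (Theorems~3.1, 3.3, 3.10--3.12 of~\cite{chung bk}) applied to $G_H$. Your additional care about the descending-versus-ascending eigenvalue indexing and the trivially-true disconnected case is a sensible elaboration of details the paper leaves implicit.
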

\begin{proof}
	Since $esd_H(S,S')$ is the same as the distance between two vertices $S$ and $S'$ in $G_H$, it can be easily seen that $esd_H(\mathcal{X}, \mathcal{Y})$ is the same as the distance between the vertex subsets $\mathcal{X}$ and $\mathcal{Y}$ of $G_H$.
	Also, for any $\mathcal{S}\subset I(H)$, we have $vol_H(\mathcal{S})=vol(\mathcal{S})$ and $\mathbf{U}^{\bm{\mathcal{L}}}(H)=\hat{L}(G_H)$. Thus (i)-(ii) of part~1, and (i)-(iii) of part~2 follow from Lemma~\ref{Knarg}$(iii)$ and Theorems~3.1, 3.3, 3.10, 3.11 and 3.12 of~\cite{chung bk}.
\end{proof}


\section{Cospectral hypergraphs}\label{Lsec7}

In this section, we present some facts about cospectral hypergraphs with respect to the four matrices considered in this paper.

 Hypergraphs that have the same spectrum of an associated matrix $M$ are referred to as cospectral hypergraphs with respect to $M$, or simply $M$-cospectral hypergraphs.
 A hypergraph $H$ is said to be determined by its spectrum with respect to $M$ if there is no other non-isomorphic hypergraph $M$-cospectral with $H$.

 The cospectrality of graphs with respect to the adjacency matrix, Laplacian matrix,  signless Laplacian matrix, and  normalized Laplacian matrix has been studied in the literature (see, for example,~\cite{butler2015},\cite{carva2017},\cite{haemers2004}).   In addition, graphs that are determined by the spectrum of each of these four matrices are found
  in the literature  (see, for example,~\cite{berman2018},\cite{liu2011},\cite{Van Dam}).
 
 Since a loopless hypergraph and its associated graph have the same unified spectrum, unified Laplacian spectrum, unified signless Laplacian spectrum and unified normalized Laplacian  spectrum, the problem of finding the cospectral hypergraphs with respect to the unified matrix (resp. unified Laplacian matrix, unified signless Laplacian matrix and unified normalized Laplacian matrix) reduces to addressing the following question: ``\textit{For a given family $\mathcal{G}$ of cospectral graphs having at least one loopless graph with respect to the adjacency matrix (resp. Laplacian matrix,  signless Laplacian matrix and  normalized Laplacian matrix), what is the family of hypergraphs whose associated graphs belongs to $\mathcal{G}$?}"

The number of cospectral families of all loopless graphs  with respect to the adjacency matrix is the same as the number of cospectral families of all loopless hypergraphs with respect to the unified matrix. Moreover, each such cospectral family $\mathcal{G}$ of loopless graphs must be contained in the cospectral family $\mathcal{H}$ of loopless hypergraphs whose associated graphs belong to $\mathcal{G}$. However, $\mathcal{G}$ and $\mathcal{H}$ are the same if and only if for each $G\in \mathcal{G}$, there is no  hypergraph $H \in \mathcal{H}$, other than $G$ whose  associated graph is $G$. For instance, as per~\cite{Van Dam}, the graphs $P_n$, $C_n$, $K_n$, and $K_{m,m}$ are uniquely determined with respect to the adjacency spectra. Consequently, the cospectral family $\mathcal{G}$ of each of these graphs must consist of only that graph.  With these information and from Lemmas~\ref{Knarg}, we conclude that $\mathcal{H}= \mathcal{G}$ for each of these graphs.

Since,  the unified matrix of a loopless hypergraph and its associated graph are the same, and the unified matrix and the adjacency matrix of a loopless graph are the same, we conclude that any loopless hypergraph which is  determined by the spectrum of its unified matrix must, in fact, be a graph and that is  determined by the adjacency spectrum. Conversely, a graph which is  determined by its adjacency spectrum may not necessarily be  determined by its unified spectrum. For instance, the graph $G$, which is the disjoint union of $3$ copies of $K_2$, is  determined with respect to the adjacency matrix (c.f.~\cite{Van Dam}); but not with respect to the unified matrix, since $G$ and the complete hypergraph on $3$ vertices have the same unified spectrum. However, the converse is true if the following condition is satisfied: \textit{Let $G$ be a graph which is determined by its adjacency spectrum. Then $G$ is  determined by its unified spectrum if and only if there is no hypergraph, other than $G$ itself, whose associated graph is $G$}. 

The above information regarding the unified matrix of a hypergraph also applies to the unified Laplacian matrix, the unified signless Laplacian matrix and the unified normalized Laplacian matrix.


\section*{Conclusion}
The unified Laplacian matrix, unified signless Laplacian matrix and unified normalized Laplacian matrix associated with a hypergraph provides a unified approach for linking spectral hypergraph theory with  the spectra of the adjacency matrices of graphs. In this context,  we introduced certain hypergraph structures and invariants, such as 
edge exact connectedness, inter-uni-connectedness, uni-connectedness, strong exactly connectedness, strong edge exact connectedness, strong uni-connectedness, deeply connectedness, deeply edge exact connectedness, deeply inter-uni-connectedness,
 edge exact distance, internal unified distance, unified distance, strong exact distance, strong edge exact distance, strong unified distance, exact set distance, edge exact set distance, internal unified set distance,
edge exact diameter, internal unified diameter, unified diameter, strong exact diameter, strong edge exact diameter, strong unified diameter, exact set diameter, edge exact set diameter, internal unified set diameter,
  deep exact components, algebraic d-connectivity, exact tree, unified Cheeger constant, and related them to the eigenvalues of these matrices. Although the relationships between these structures and invariants and the spectrum of the unified matrix of the hypergraph have been established, further research is needed to explore their properties from a non-spectral hypergraph theoretical viewpoint. Moreover, this approach allows for the extension of various results and properties of graphs, typically expressed using Laplacian matrix, signless Laplacian matrix and  normalized Laplacian matrix, respectively, to hypergraphs using the unified Laplacian matrix, unified signless Laplacian matrix and unified normalized Laplacian matrix.

Since the unified Laplacian matrix, unified signless Laplacian matrix and unified normalized Laplacian matrix of a hypergraph are natural generalization of Laplacian matrix, signless Laplacian matrix and normalized Laplacian matrix, respectively of a loopless graph, the results we established in this paper generalize the corresponding results in graphs.


\begin{thebibliography}{00}
	

	\bibitem{banerjee2021spectrum}
	A. Banerjee,
	\newblock On the spectrum of hypergraphs,
	\newblock {\em Linear Algebra Appl.} {\bf614} (2021), 82--110.
	
	\bibitem{Banarjee tensor} 	A. Banerjee, A. Char and B. Mondal,
	\newblock Spectra of general hypergraphs,
	\newblock {\em Linear Algebra Appl.} {\bf 518} (2017), 14--30.
	
	\bibitem{bapat}R.B. Bapat, Graphs and Matrices, Hindustan Book Agency, New Delhi, 2010.
	
	
	 \bibitem{berman2018} A. Berman, D. M. Chen, Z. B. Chen, W.Z. Liang, and X. D. Zhang,  A family of graphs that are determined by their normalized Laplacian spectra, {\em Linear Algebra Appl.}, {\bf548} (2018), 66-76.
	 
	 \bibitem{butler2015} S. Butler, Using twins and scaling to construct cospectral graphs for the normalized Laplacian, {\em Electron. J. Linear Algebra},  {\bf28} (2015), 54–68.
	 
	
	 
	
	\bibitem{cardoso}K. Cardoso  and V. Trevisan, The signless Laplacian matrix of hypergraphs. {\em Spec. Matrices} {\bf10} (2022), 327--342. 
	
	 \bibitem{carva2017} J. Carvalho, B.S. Souza, V. Trevisan, and F.C Tura, Exponentially many graphs have a q-cospectral mate,  {\em Discrete Math.},
	{\bf340} (2017), 2079–2085.
	
	
	\bibitem{chung bk} F.R. Chung, Spectral Graph Theory, \textit{Am. Math. Soc.}, Providence, 1997.
	
	\bibitem{cooper2012spectra}
	J. Cooper and A. Dutle,
	\newblock Spectra of uniform hypergraphs,
	\newblock {\em Linear Algebra Appl.} {\bf436} (2012), 3268--3292.
	
	\bibitem{cvetko}D. Cvetkovi\'{c}, P. Rowlinson and S. Simi\'{c}, An Introduction to the Theory of Graph Spectra, \textit{Cambridge University Press}, Cambridge, 2010.
	
	\bibitem{ad mat 1}K. Feng and W. Li, Spectra of Hypergraphs and Applications, \textit{J. Number Theory} {\bf60} (1996), 1-22.
	
	
	\bibitem{Fiedler}M. Fiedler, Algebraic connectivity of graphs, \textit{Czechoslovak Math. J} {\bf23} (1973), 298--305. 

    \bibitem{tree}	S. Ganesh and S. Mohanty, Trees with matrix weights: Laplacian matrix and characteristic-like vertices, {\em Linear Algebra Appl.} {\bf646} (2022), 195-237.
    
    \bibitem{haemers2004} W. H. Haemers, E. Spence, Enumeration of cospectral graphs, {\em European J. Combin.}, {\bf25} (2004),
    199–211.
    
    
    \bibitem{Hu}
    S. Hu and L. Qi,
    \newblock Algebraic connectivity of an even uniform hypergraph,
    \newblock {\em J. Comb. Optim.} {\bf24} (2012), 564-579. 
	
	
	
	
	
	\bibitem{NLbound}J. Li, J. M. Guo and W. C. Shiu, Bounds on normalized Laplacian eigenvalues of graphs. \textit{J. Inequal. Appl.} (2014), 1--8.
		
	\bibitem{Li}G. Li, L. Qi and G. Yu,
	\newblock The Z‐eigenvalues of a symmetric tensor and its application to spectral hypergraph theory,
	\newblock {\em Numer. Linear Algebra Appl.} {\bf20} (2013), 1001--1029.
	

 \bibitem{liu2011} M. Liu, B. Liu, and F. Wei, Graphs determined by their (signless) Laplacian spectra, {\em Electron. J. Linear Algebra},  {\bf22} (2011), 112-124.

	
	\bibitem{mohar} B. Mohar, Eigenvalues, diameter, and mean distance in graphs, {\em Graphs Combin.}, {\bf7} (1991), 53--64. 
	
	\bibitem{Qi} L. Qi, $H^+$-eigenvalues of Laplacian tensor and signless Laplacians, \textit{Commun. Math. Sci.}, {\bf12} (2014), 1045--1064. 	
		
	\bibitem{Qi tensor bk}L. Qi and Z. Luo, Tensor Analysis: Spectral Theory and Special Tensors, \textit{SIAM}, Philadelphia, 2017.
	
	\bibitem{reff2012}	N. Reff and L. Rusnak, An oriented hypergraphic approach to algebraic graph theory, \textit{Linear Algebra Appl.} {\bf437} (2012), 2262–2270.
	
	\bibitem{Rod 2002}J.A. Rodr\'{\i}guez, On the Laplacian eigenvalues and metric parameters of hypergraphs, {\em Linear Multilinear Algebra}, {\bf 50} (2002), 1--14. 
	
	
	\bibitem{Bulo} S.R. Bul\`{o} and M. Pelillo, New bounds on the clique number of graphs based on spectral hypergraph theory, In: T. Stützle (eds), \textit{Learning and Intelligent Optimization}, Springer, Berlin, (2009), 45--58.
	
	
	
	\bibitem{Van Dam} Van Dam, R. Edwin, and W.H. Haemers, Which graphs are determined by their spectrum?, {\em  Linear Algebra Appl.} {\bf373} (2003), 241--272.
	
	
	
	\bibitem{vishnu part1}	R. Vishnupriya and R. Rajkumar, New matrices for spectral hypergraph theory, I, 	arXiv:2411.07214 [math.CO], 2024
	
	
	
	\bibitem{Xie} J. Xie and A. Chang, On the $Z$-eigenvlues of the signless Laplacian tensor or an even uniform hypergraph, \newblock {\em Numer. Linear Algebra Appl.} {\bf20} (2013), 1030--1045.
	
	
	





 
 



\end{thebibliography}
\end{document}